\newcommand{\CC}{\mathbb{C}}
\newcommand{\PP}{\mathbb{P}} 
\newcommand{\RR}{\mathbb{R}}
\newcommand{\ZZ}{\mathbb{Z}}
\newcommand{\cB}{\mathcal{B}}
\newcommand{\cF}{\mathcal{F}}
\newcommand{\cH}{\mathcal{H}}
\newcommand{\cO}{\mathcal{O}}
\newcommand{\tc}{\mathrm{c}}
\renewcommand{\a}{\alpha}
\newcommand{\D}{\Delta} 
\renewcommand{\d}{\delta}
\newcommand{\g}{\gamma} 
\renewcommand{\L}{\Lambda}
\renewcommand{\l}{\lambda}
\renewcommand{\b}{\beta}
\newcommand{\om}{\omega} 
\renewcommand{\S}{\Sigma} 
\newcommand{\s}{\sigma}
\newcommand{\eps}{\varepsilon}
\renewcommand{\max}{\mathrm{max}}
\newcommand{\el}{\langle} 
\newcommand{\er}{\rangle}
\newcommand{\tr}{\mathrm{tr}}
\newcommand{\bh}{\mathbf{h}}
\newcommand{\sd}{\triangle}
\newcommand{\fk}{{\sc fk}}
\newcommand{\gks}{{\sc gks}}
\newcommand{\lra}{\leftrightarrow}
\renewcommand{\b}{\beta}
\newcommand{\oo}{\infty}
\newcommand{\sm}{\setminus}
\newcommand{\es}{\varnothing}
\newcommand{\se}{\subseteq}
\newcommand{\crit}{\mathrm{c}} 
\newcommand{\fp}{\mathfrak{p}}
\newcommand{\z}{\zeta}
\newcommand{\one}{\hbox{\rm 1\kern-.27em I}}
\newcommand{\Ob}{[0,\b)^\fp}
\newtheoremstyle{slthm}% theoremstyle with slanted body type
     {}%      Space above (empty=\topspace)
     {\baselineskip}%      Space below 
     {\slshape}%         Body font
     {\parindent}%    Indent amount (\parindent = para indent)
     {\scshape}% Thm head font
     {.}%        Punctuation after thm head
     { }%     Space after thm head: " " = normal interword space
     {}%         Thm head spec (can be left empty, meaning `normal')
\theoremstyle{slthm}
\newtheorem{definition}{Definition}[section]
\newtheorem{theorem}[definition]{Theorem}
\newtheorem{proposition}[definition]{Proposition}
\newtheorem{lemma}[definition]{Lemma}
\newtheorem{remark}[definition]{Remark}
\title[Infrared bound]{Infrared bound and mean-field behaviour\\
in the quantum Ising model}
\author{Jakob E. Bj\"ornberg}
\thanks{Department of Mathematics,
Uppsala University, Box 256, 751 05 Uppsala, Sweden, 
Phone +46(0)18-471 3106,
e-mail: jakob@math.uu.se}
\date{\today}
\begin{document}
\maketitle

\begin{abstract}
We prove an infrared bound for the transverse field
Ising model.  This bound is stronger than the previously
known infrared bound for the model, and allows us to
investigate mean-field behaviour.  As an application
we show that the critical exponent $\g$
for the susceptibility attains its mean-field value 
$\g=1$ in dimension at least 4 (positive temperature),
respectively 3 (ground state), with logarithmic corrections
in the boundary cases.
\end{abstract}

\section{Introduction}
\label{intro_sec}

Infrared bounds were originally developed in the 1970's as a method
for proving the existence of phase 
transitions~\cite{dls,froehlich_etal78,fss}.  They
were subsequently also used to establish mean-field behaviour in
high-dimen\-sional 
spin systems, in the sense that certain critical exponents attain their
mean-field values~\cite{aiz82,af,ag,sokal79}.  
For establishing phase
transitions the method is applicable to models
where the spins have a continuous symmetry group, 
such as the classical Heisenberg and $O(n)$ models;
see~\cite{biskup_rp} for a review.  
In proving mean-field
behaviour, the infrared bound is useful because it implies the
finiteness of the `bubble diagram' in sufficiently high dimension.  
Similar bounds appear also in the analysis of high-dimensional
percolation models~\cite{hara_slade}.

The method of infrared bounds was first developed for classical 
spin systems, but it was quickly extended to quantum 
models.  In the quantum setting,
the method was successful for
proving the existence of a phase transition for a range of models,
including the Heisenberg antiferromagnet~\cite{dls}.  However, no
attention has yet been paid to its implications for mean-field behaviour
in quantum models.  The main objective of this article is to
establish an infrared bound which can be used to investigate
mean-field behaviour in the quantum setting.

The  model we will consider in this article is the
\emph{transverse field Ising model} on the integer lattice $\ZZ^d$.
Let $\L\se\ZZ^d$ be finite.  In the language of quantum spin systems,
the transverse field Ising model in the volume $\L$ has Hamiltonian
\begin{equation}\label{h1}
H_\L=-\l\sum_{xy\in\L}\s_x^{(3)}\s_y^{(3)}-
\d\sum_{x\in\L}\s_x^{(1)}
\end{equation}
acting on the Hilbert space $\cH_\L=\bigotimes_{x\in\L}\CC^2$.  Here
the first sum is over all (unordered)
nearest neighbour sites in $\L$ and the second
sum is over all single sites; 
\begin{equation*}
\s^{(3)}=\begin{pmatrix} 1 & 0\\ 0 & -1\end{pmatrix}
\quad\mbox{and}\quad
\s^{(1)}=\begin{pmatrix} 0 & 1\\ 1 & 0\end{pmatrix}
\end{equation*}
are the spin-$\tfrac{1}{2}$ Pauli matrices; and $\l$ and $\d$ denote the
spin-coupling and transverse field intensities, respectively.  
The model was introduced in~\cite{lsm} and has been widely
studied since.

Let $\b>0$ be a fixed real number (known as the `inverse temperature')
and define the finite volume, positive temperature 
state $\el\cdot\er_{\L,\b}$ by
\begin{equation}\label{q_state_eq}
\el Q\er_{\L,\b}=\frac{\tr(e^{-\b H_\L}Q)}{\tr(e^{-\b H_\L})},
\end{equation}
where $Q$ is a suitable observable (matrix).  One may then define 
the finite-volume \emph{ground state} $\el\cdot\er_{\L,\oo}$, 
as well as \emph{infinite-volume} states $\el\cdot\er_\b$ by
\begin{equation*}
\el \cdot\er_{\L,\oo}=\lim_{\b\uparrow\oo}\el\cdot\er_{\L,\b},
\quad \el \cdot\er_\b=\lim_{\L\uparrow\ZZ^d}\el\cdot\er_{\L,\b},
\quad\mbox{for } 0<\b\leq\oo.
\end{equation*}
See for example~\cite{akn} for more information about this.  
It is known~\cite{ginibre69}
that for each $\d>0$ and $0<\b\leq\oo$ 
there is a critical value $\l_\crit=\l_\crit(\d,\b)$
of $\l$ which marks the point below which the 
\emph{two-point correlation function}
\begin{equation}\label{tp}
c(x,y)=\el\s_x^{(3)}\s_y^{(3)}\er_\b
\end{equation} 
vanishes as $|x-y|\rightarrow\infty$, and above which it does not.
This critical point may also be characterized in terms of
the vanishing/non-vanishing of the spontaneous 
$\s^{(3)}$-magnetization,
or of the finiteness/non-finiteness of the 
susceptibility~\cite{bjogr}.

For the classical Ising model ($\d=0$ and $\l=1$ 
in~\eqref{h1}), the infrared bound
is an upper bound on the Fourier transform
\begin{equation*}
\hat c(k)=\sum_{x\in\ZZ^d} c(0,x)e^{ik\cdot x}, \quad k\in(-\pi,\pi]^d
\end{equation*}
of the two-point function~\eqref{tp}.
Here $k\cdot x$ denotes the usual scalar product in $\RR^d$.
The classical infrared bound states that
\begin{equation}\label{cirb}
\hat c(k)\leq \frac{1}{2\b \hat L(k)},
\end{equation}
where
$\hat L(k)=\sum_{j=1}^d(1-\cos(k_j))$
is the Fourier transform of the Laplacian on $\ZZ^d$.
For quantum models, infrared bounds are typically stated in terms of
the \emph{Duhamel two-point function}
\begin{equation}\label{dtf}
b(x)=(\s^{(3)}_0,\s^{(3)}_x):=\frac{1}{\tr(e^{-\b H_\L})}
\int_0^\b\tr(e^{-(\b-t) H_\L}\s^{(3)}_xe^{-t H_\L}\s^{(3)}_0)\,dt
\end{equation}
rather than the usual two-point function~\eqref{tp}.
For the transverse field Ising model, it follows 
as a special case of~\cite[Theorem~4.1]{dls} that
\begin{equation}\label{dtf_irb}
\hat b(k)\leq \frac{1}{2\l\hat L(k)}.
\end{equation}
(Note that our $\hat b(k)$ and $H_\L$ differ from
the quantities in~\cite{dls} by factors $\b$ and $\l$,
respectively.)

The main result of this article, Theorem~\ref{irb_thm},
strengthens~\eqref{dtf_irb}.  This will allow us to
compute the
critical exponent $\g$ for the susceptibility in dimension
$d\geq4$ (finite temperature) respectively $d\geq3$ (ground state);
see Theorem~\ref{mf_thm}.  
Before stating the main result 
we describe a graphical representation of 
the transverse field Ising model which is of fundamental importance to
our analysis.

\subsection{Graphical representation}
\label{gr_ssec}

It is well-known~\cite{akn,aizenman_nacht}
that the transverse field Ising model
on $\L$ possesses a `path integral representation' which expresses it
as a type of classical Ising model on the continuous space
$\L\times[0,\beta)$.  This may be expressed as follows.
In what follows $\b<\oo$, although conclusions about the
ground state may be obtained by letting $\b\rightarrow\oo$.

Let $\Ob$ denote the circle of length $\b$,
formally defined as $\Ob=\{e^{2\pi it/\b}:t\in\RR\}$.
Usually we identify $\Ob$ with its
parameterization for $t\in[0,\b)$;
the superscript $\fp$ serves as a reminder that
the set is `periodic'.  We let $N$ be an integer and 
throughout the article let 
$\L=\L_N=(\ZZ/2N)^d$ be the $d$-dimensional
torus of side $2N$.

Let $E[\cdot]$ denote a probability measure
governing the following:
\begin{enumerate}
\item a collection $D=(D_x: x\in\L)$ of independent
Poisson processes of intensity 
$\d$ on $[0,\b)$, conditioned on the number of points
 $|D_x|$ being even for each $x\in\L$;  and
\item a collection $\xi=(\xi_x: x\in \L)$ of independent
random variables, taking values $0$ or $1$ with
probability $1/2$ each,
\end{enumerate}
such that $D$ and $\xi$ are independent of each other.
Write $\s(x,t)=(-1)^{\xi_x+|D_x\cap[0,t]|}$ for the right-continuous
function of $t$ which changes between $-1$ and $+1$ at the
points of $D_x$ and takes the value $(-1)^{\xi_x}$ at $t=0$.
Note that $\s$ is well-defined as a function on $\Ob$. 
Write $\S_\L$ for the set of such $\s(x,t)$.
This set may be endowed with a natural 
sigma-field $\cF_\L$, generated by the projections
\begin{equation}\label{proj_eq}
\s\mapsto \big(\s(x_1,t_1),\dotsc,\s(x_n,t_n)\big)\quad
\mbox{for } n\geq 1,\; x_j\in\L,\; t_j\in[0,\b).
\end{equation}
Let 
\begin{equation}\label{pf1}
Z^\b_\L=E\Big[\exp\Big(\l\sum_{x\sim y}\int_0^\b\s(x,t)\s(y,t)\,dt\Big)
\Big],
\end{equation}
where the sum is over nearest neighbours $x,y\in\L$.
\begin{definition}\label{st_def}
The space--time Ising measure $\mu^\b_\L$ is the probability
measure on $(\S_\L,\cF_\L)$ given by
\begin{equation*}
\mu^\b_\L(f)=\frac{1}{Z^\b_\L}
E\Big[f(\s)\exp\Big(\l\sum_{x\sim y}
\int_0^\b \s(x,t)\s(y,t)\,dt\Big)\Big]
\end{equation*}
for each bounded, measurable test function $f:\S_\L\rightarrow\RR$.
\end{definition}
(Here, and in what follows, $\mu^\b_\L(f)$ denotes the expectation of
$f$ under the measure $\mu^\b_\L$.)
The measure $\mu^\b_\L$ corresponds with the state 
$\el\cdot\er_{\L,\b}$ in
the sense that for any finite set $A\se\L$, we have the identity
\begin{equation}\label{qi_st}
\Big\el\prod_{x\in A}\s_x^{(3)}\Big\er_{\L,\b}=
\mu^\b_\L\Big(\prod_{x\in A}\s(x,0)\Big).
\end{equation}
In very brief terms, this correspondence is
based on applying the Lie--Trotter product formula
to the operator $e^{-\b H_\L}$ and evaluating the trace in
the $\s^{(3)}$-basis.
See~\cite{akn} and references therein, and also~\cite{ioffe_geom}
for information about similar graphical representations.
There exist limits of the measures $\mu^\b_\L$ as
$N\rightarrow\oo$ and/or $\b\rightarrow\oo$, and a
suitable version of~\eqref{qi_st} holds in
infinite volume, also for the ground state.  

%Note that the measure $\mu^\b_\L$ allows us to consider
%`more' observables than the state $\el\cdot\er_{\L,\b}$;  
%for example the quantity, 
%\begin{equation}\label{tp2}
%c^\b_\L((x,s),(y,t)):=
%\mu^\b_\L\big(\s(x,s)\s(y,t)\big),\qquad(0\leq s,t<\b),
%\end{equation}
%which is central to this work,
%has no analog in~\eqref{q_state_eq}.

\subsection{Main results}

We now outline the main results of this article, 
saving more detailed statements and explanations for the
relevant later sections.  Our main results concern the
\emph{Schwinger function}:
\begin{equation}\label{tp2}
c^\b_\L((x,s),(y,t)):=
\mu^\b_\L\big(\s(x,s)\s(y,t)\big),\qquad(0\leq s,t<\b).
\end{equation}
Note that this may alternatively be expressed as 
\[
c^\b_\L((x,s),(y,t))=\frac{1}{\tr(e^{-\b H_\L})}
\tr(e^{-(\b-t+s) H_\L}\s^{(3)}_ye^{-(t-s) H_\L}\s^{(3)}_x),
\]
as may be seen using a Lie--Trotter expansion as
for~\eqref{qi_st};
cf.~\eqref{q_state_eq} and~\eqref{dtf}.
For $k\in\frac{2\pi}{2N}\L$ and $l\in\frac{2\pi}{\b}\ZZ$ let
\begin{equation*}%\label{c_hat_eq}
\hat c^\b_\L(k,l)=\sum_{x\in\L} \int_0^\b c^\b_\L((0,0),(x,t))
e^{ik\cdot x}e^{ilt}\,dt
\end{equation*}
be the Fourier transform of~\eqref{tp2}.
The following is the main result of this article;  it holds
for finite volume and positive temperature,
but has implications for infinite volume and ground state
which we discuss below. 
\begin{theorem}[Infrared bound]\label{irb_thm}
Let $k\in\frac{2\pi}{2N}\L$ and $l\in\frac{2\pi}{\b}\ZZ$.  
Then
\begin{equation*}
\hat c^\b_\L(k,l)\leq \frac{48}{2\l \hat L(k)+l^2/2\d}.
\end{equation*}
\end{theorem}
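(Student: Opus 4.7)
My strategy is to approximate the space-time Ising model by a $(d+1)$-dimensional classical Ising model on a product lattice $\L\times\TT_M$, where the time circle $\Ob$ is discretized into $M$ time steps of width $\eps=\b/M$. This approximating model is a ferromagnetic classical Ising model which is reflection-positive in all $d+1$ lattice directions, so the standard Dyson--Lieb--Simon / Fr\"ohlich--Simon--Spencer argument gives an infrared bound involving both the spatial and temporal Fourier frequencies. Letting $\eps\to 0$ carefully should convert this into the sought bound on $\hat c^\b_\L(k,l)$.

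\textbf{Setup and discrete bound.} The appropriate discretization, following a Lie--Trotter expansion, puts nearest-neighbour ferromagnetic couplings $J_s=\l\eps$ in each of the $d$ spatial directions and $J_t=-\tfrac12\log\tanh(\d\eps)$ in the temporal direction. The resulting classical Gibbs measure converges weakly to $\mu^\b_\L$ as $\eps\to 0$, and Riemann-sum approximations to $\sum_x\int_0^\b c^\b_\L((0,0),(x,t))e^{ik\cdot x}e^{ilt}\,dt$ are provided by the corresponding discrete Fourier transform of the classical two-point function $c^{(\eps)}$. Applying classical reflection positivity and Gaussian domination to $c^{(\eps)}$ in every lattice direction yields the discrete infrared bound
\[
\hat c^{(\eps)}(k,l)\leq \frac{1}{2J_s\hat L(k)+2J_t(1-\cos(l\eps))}.
\]

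\textbf{Continuum limit and main obstacle.} The spatial part behaves cleanly: after multiplying by $\eps$ (the Riemann-sum normalisation), $2J_s\hat L(k)/\eps\to 2\l\hat L(k)$, which already reproduces the Duhamel bound \eqref{dtf_irb} at $l=0$. The temporal piece is the crux of the matter. With the Lie--Trotter choice one has $J_t\sim\tfrac12\log(1/\d\eps)$ and $1-\cos(l\eps)\sim l^2\eps^2/2$, so $2J_t(1-\cos(l\eps))/\eps\to 0$, returning no improvement on \eqref{dtf_irb}. To extract the $l^2/(2\d)$ factor one must sharpen this step; two routes look plausible. First, one can try to use Griffiths/FKG monotonicity to lower $J_t$ to a smaller but $\eps$-bounded effective temporal coupling while preserving reflection positivity, absorbing the discrepancy into an absolute constant (which is the likely origin of the $48$). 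Alternatively, one can combine the Duhamel bound with a Falk--Bruch-type inequality applied to the observable $\hat\s_k^{(3)}=\sum_x e^{ik\cdot x}\s_x^{(3)}$ and the commutator $[\s_x^{(3)},[H_\L,\s_x^{(3)}]]=4\d\,\s_x^{(1)}$, which is exactly the object whose expectation produces the coefficient $2\d$ in the denominator of $l^2/(2\d)$. The main technical difficulty in either route lies in arranging the comparison so that the spatial reflection-positivity bound and the temporal commutator bound combine additively in the denominator, rather than merely multiplicatively; this is what promotes $1/(2\l\hat L(k))$ and $2\d/l^2$ into the single expression $48/(2\l\hat L(k)+l^2/(2\d))$.
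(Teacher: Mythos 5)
You have correctly identified the fatal obstacle in the discretization route, and in fact the paper itself warns against exactly this approach in the discussion section: ``\emph{simply taking the limit in the corresponding bound gives only~\eqref{dtf_irb} and not the full bound of Theorem~\ref{irb_thm}}.'' The Lie--Trotter temporal coupling $J_t\sim\tfrac12\log(1/\d\eps)$ is simply too large for the discrete bound to retain any $l$-dependence in the limit, just as you computed. The problem is that having diagnosed the gap, your two proposed repairs remain speculative, and neither is the one the paper uses.

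Your first route (weaken $J_t$ monotonically to an $\eps$-bounded effective coupling) is unlikely to work: lowering $J_t$ destroys the approximation to $\mu^\b_\L$, so even if the resulting discrete model obeys a better-looking bound, the limit is no longer the quantum Ising Schwinger function. Your second route (Falk--Bruch with the double commutator $[\s_x^{(3)},[H_\L,\s_x^{(3)}]]=4\d\s_x^{(1)}$) controls only the $l=0$ mode; the Falk--Bruch inequality relates the Duhamel function $\hat b(k)=\hat c^\b_\L(k,0)$ to the symmetrized static two-point function, not to the full time-dependent $\hat c^\b_\L(k,l)$, so it does not by itself yield the $l^2/(2\d)$ term in the denominator. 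You gesture at the real difficulty (``arranging the comparison so that the spatial reflection-positivity bound and the temporal commutator bound combine additively in the denominator'') but offer no mechanism for it.

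The paper's argument works entirely in the continuum and introduces a different device: a Gaussian-domination functional $Z(\mathbf{h})$ (equation~\eqref{Zh_eq}) that couples a twice-differentiable field $h(x,t)$ to the spins both spatially (through $\el L(\s+h),\s+h\er$) and temporally (through the term $\tfrac{1}{2\d}\sum_x\int h''(x,t)\s(x,t)\,dt$). The proof then splits into two reflection-positivity steps. Lemma~\ref{spatial_gd_lem} uses spatial reflections of the torus (the standard chessboard-type argument) to reduce to a spatially constant $h(t)$. The genuinely new input is Lemma~\ref{temporal_gd_lem}: after an integration by parts that rewrites $\int h''\s$ as $-2\sum_j h'(t_j^x)\s(x,t_j^x)$ over the Poisson points, the paper uses reflection positivity of the conditioned Poisson process in the time circle (Lemma~\ref{cs_vert_lem}) and an iterated dyadic symmetrization procedure (Lemmas~\ref{snippet_lem} and~\ref{white_lem}) to show $Z(h)\leq\z(\|h'\|_\oo)Z(0)$, where $\z(r)=\mu[\cosh(r/\d)^{|D|}]$. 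Choosing $h=\a e^{ik\cdot x}e^{ilt}$ and expanding to second order in $\a$ produces exactly the additive denominator $2\l\hat L(k)+l^2/2\d$ you were hoping for, with the constant $48$ coming from $\mu|D|\leq 2\b\d|\L|$ and the fraction of the circle where $|h'|\geq\|h'\|_\oo/2$. None of this machinery appears in your proposal, so the crucial step is still missing.
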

This result is proved in Section~\ref{irb_sec}.
In fact, we will prove the slightly stronger bound
\[
\hat c_\L^\b(k,l)\leq 
\frac{2\l\hat L(k)+48l^2/2\d}{(2\l \hat L(k)+l^2/2\d)^2},
\]
see~\eqref{messy_bound}.  Setting $l=0$  gives 
\begin{equation}\label{dls_eq}
\sum_{x\in\L}\Big(\int_0^\b\mu^\b_\L(\s(0,0)\s(x,t))\,dt\Big)
e^{ik\cdot x} \leq \frac{1}{2\l\hat L(k)}.
\end{equation}
The integral in~\eqref{dls_eq} equals the Duhamel two-point function
$b(x)=(\s^{(3)}_0,\s^{(3)}_x)$ of~\eqref{dtf}.
Thus~\eqref{dls_eq} is~\cite[Theorem~4.1]{dls} in the special case
of the transverse field Ising model.  It also reduces to the classical
bound~\eqref{cirb} when $\d=0$ and $\l=1$.
We now describe some applications of Theorem~\ref{irb_thm}.

Let $0<\b\leq\oo$ be fixed.  For each $\l<\l_\crit$ 
there is a unique infinite-volume limit 
$\mu^\b$ of the measures $\mu^\b_\L$.
We let
\begin{equation}\label{sus_eq}
\chi=\chi(\d,\l,\b):=
\sum_{x\in\ZZ^d}\int_{-\b/2}^{\b/2}\mu^\b(\s(0,0)\s(x,t))\,dt
\end{equation}
denote the \emph{susceptibility}.  
(We have chosen the bounds $-\b/2$, $\b/2$ in the integral rather than
$0$, $\b$ to get the correct range also for the case $\b=\oo$; for
$\b<\oo$ the two choices are equivalent.)
To motivate this choice of name, note that
\begin{equation*}
\begin{split}
\sum_{x\in\L}\int_{-\b/2}^{\b/2}\mu^\b_\L(\s(0,0)\s(x,t))\,dt
&=\sum_{x\in\L}(\s_0^{(3)},\s_x^{(3)})\\
&=\frac{d}{d\nu}\Big[\frac{\tr(e^{-\b H_\L^\nu}\s_0^{(3)})}
{\tr(e^{-\b H_\L^\nu})}\Big]_{\nu=0}
\end{split}
\end{equation*}
where
\begin{equation*}
H_\L^\nu=-\l\sum_{xy\in\L}\s_x^{(3)}\s_y^{(3)}-
\d\sum_{x\in\L}\s_x^{(1)}-\nu\sum_{x\in\L}\s_x^{(3)}.
\end{equation*}
It is known that
$\chi$ is finite for
$\l<\l_\crit$ and diverges as $\l\uparrow\l_\crit$; 
see~\cite{bjogr}.  The critical
exponent $\g$ may be defined by the expected critical behaviour 
\begin{equation*}
\chi(\l)\sim (\l_\crit-\l)^{-\g}\quad\mbox{as }\l\uparrow\l_\crit
\mbox{ with $\d$ fixed}.
\end{equation*}
For the classical Ising model, it was proved 
in~\cite{aiz82,ag} that $\g$ exists and equals 1
for $d\geq 4$ (with logarithmic corrections when $d=4$).
One may also consider the speed of divergence 
of $\chi(\d,\l)$ as $(\d,\l)$ approaches the
critical curve $(\d,\l_\crit(\d))$ along any
straight line.  As an application of Theorem~\ref{irb_thm},
we will prove the following result.  We let 
$\rho(\d,\l)=\sqrt{(\d-\d_0)^2+(\l-\l_\crit(\d_0))}$ denote the
distance from $(\d,\l)$ to a specified point
$(\d_0,\l_\crit(\d_0))$ on the critical curve.
\begin{theorem}\label{mf_thm}
Let $\d_0>0$ and let 
$(\d,\l)$ approach $(\d_0,\l_\crit(\d_0))$ along
any straight line strictly inside the quadrant
$\{(\d,\l):\d>\d_0,\l<\l_\crit(\d_0)\}$.
There are finite constants $c_1,c_2$, depending 
on $d$, $\b$ and the slope of the line of approach, such that
the following holds.
\begin{enumerate}
\item Suppose that either $\b<\oo$ and $d>4$, or
$\b=\oo$ and $d>3$.  Then   
\begin{equation*}
c_1\rho(\d,\l)^{-1}\leq\chi(\d,\l)\leq c_2\rho(\d,\l)^{-1}
\end{equation*}
as $\rho(\d,\l)\downarrow 0$.
\item Suppose that either $\b<\oo$ and $d=4$, or
$\b=\oo$ and $d=3$.   Then
\begin{equation*}
c_1\rho(\d,\l)^{-1}\leq\chi(\d,\l)\leq -c_2\log\rho(\d,\l)/\rho(\d,\l)
\end{equation*}
as $\rho(\d,\l)\downarrow 0$.
\end{enumerate}
\end{theorem}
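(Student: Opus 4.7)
The plan is to follow the Aizenman--Fern\'andez strategy~\cite{af,ag} for the classical Ising model, suitably adapted to the space--time measure $\mu^\b_\L$ and using the infrared bound of Theorem~\ref{irb_thm} to control the `bubble'
$$B=B(\d,\l,\b):=\sum_{x\in\ZZ^d}\int_{-\b/2}^{\b/2}\mu^\b\big(\s(0,0)\s(x,t)\big)^2\,dt.$$
The matching lower bound $\chi\ge c_1\rho^{-1}$ is essentially classical mean-field and does not need the infrared bound; the upper bound requires boundedness of $B$, which is where Theorem~\ref{irb_thm} and the dimension threshold enter.

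First I would establish two differential inequalities for $\chi$ in the space--time setting. The simple upper bound $\partial\chi/\partial\l\le C\chi^2$ should follow from \gks-type correlation inequalities applied to $\mu^\b_\L$; after integrating along the line of approach it yields the lower bound $\chi\ge c_1\rho^{-1}$ claimed in the theorem. The harder Aizenman--Fern\'andez-type lower bound
$$\frac{\partial\chi}{\partial\l}\ge c\,\chi^2\bigl(1-C\,B\bigr),$$
and its analogue $\partial\chi/\partial\d\ge c\chi^2(1-CB)$, should come from a source-switching argument in a random-parity / random-current representation of $\mu^\b_\L$, combined with a \ghs inequality in the continuous-time setting. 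Together these control the derivative of $\chi$ along any straight line of approach strictly interior to the quadrant described in the statement.

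Next I would bound $B$ using Plancherel's identity on $\L_N\times\Ob$ and Theorem~\ref{irb_thm}:
$$B\le C\sum_{l\in(2\pi/\b)\ZZ}\int_{[-\pi,\pi]^d}\frac{dk}{\bigl(2\l\hat L(k)+l^2/2\d\bigr)^2}.$$
Since $\hat L(k)\sim|k|^2$ near the origin, the $l=0$ term behaves like $\int dk/|k|^4$, which converges iff $d>4$ and diverges logarithmically at $d=4$. For $\b=\oo$ the discrete sum over $l$ becomes an integral, effectively raising the dimension by one, so the thresholds shift to $d>3$ and $d=3$. Hence $B$ is bounded uniformly in a neighbourhood of the critical curve in the strict supercritical-dimension regime, while $B=O(|\log\rho|)$ in the boundary dimensions. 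Setting $u=1/\chi$, the inequalities of Step~1 translate to $-C\le du/d\rho\le -c(1-CB(\rho))$, and integration from $0$ (where $u=0$) to $\rho$ produces $u(\rho)\asymp\rho$ in the strict regime, and $c\rho\le u(\rho)\le C\rho|\log\rho|$ in the boundary case, which are exactly the bounds of Theorem~\ref{mf_thm}.

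The chief obstacle lies in Step~1, namely in establishing the Aizenman--Fern\'andez-type lower bound within the continuous-time space--time representation. Although a random-parity representation of $\mu^\b_\L$ is available, the source-switching manipulations that underpin the classical argument must be transferred to a setting where Poisson cuts on $\L\times\Ob$ replace discrete edges, and this requires careful bookkeeping of `bridges' and `sources' in the continuous geometry. Once that inequality is in place, the combination with Theorem~\ref{irb_thm}, Plancherel, and a one-variable ODE estimate along the straight line of approach is essentially routine.
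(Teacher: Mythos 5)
Your overall architecture—differential inequalities, Plancherel combined with Theorem~\ref{irb_thm} to bound the bubble, then a one-variable ODE estimate along the line of approach—matches the paper. The bubble bound in your plan is essentially Lemma~\ref{bub_lem2}, and your heuristic for the dimension thresholds ($d>4$ versus $d>3$, with $\b=\oo$ raising the effective dimension by one) is correct.

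However, the form you propose for the key differential inequality, $\frac{\partial\chi}{\partial\l}\geq c\,\chi^2(1-CB)$ together with an ``analogue'' for $\partial\chi/\partial\d$, is a genuine gap. (A minor point first: $\partial\chi/\partial\d\leq 0$, so any such inequality must be for $-\partial\chi/\partial\d$.) The real obstruction is structural. When you run the Aizenman--Fern\'andez conditioning-on-clusters argument in the random-parity (continuous-time) setting, the first intersection of the backbone $\xi_3$ with the cluster $C_{1,2}(a)$ can occur in two ways: at the endpoint of a \emph{bridge} (contributing a term $\propto \l\,\partial\chi_\L/\partial\l$), or at a \emph{cut} in $\D$ (contributing a term $\propto \d\,(-\partial\chi_\L/\partial\d)$). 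In the discrete classical model there is only one type of ``pivotal bond,'' which is why the classical inequality closes in the single variable $\b$. Here one instead gets the coupled pair (Lemma~\ref{diff_lem})
\begin{align*}
\frac{\partial\chi_\L}{\partial\l}&\geq 4d\chi_\L^2-4dB_\L\chi_\L
-2d\l B_\L\,\frac{\partial\chi_\L}{\partial\l}
-8d\d B_\L\Big(-\frac{\partial\chi_\L}{\partial\d}\Big),\\
-\frac{\partial\chi_\L}{\partial\d}&\geq 2\chi_\L^2-2B_\L\chi_\L
-\l B_\L\,\frac{\partial\chi_\L}{\partial\l}
-4\d B_\L\Big(-\frac{\partial\chi_\L}{\partial\d}\Big),
\end{align*}
with both derivatives on the right-hand side. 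Your simpler inequality does not follow from these; indeed, if it did then the theorem would cover vertical and horizontal approach too, which the paper explicitly notes it cannot yet do (Remark~\ref{gamma_rk}, which flags the missing comparisons between $\partial\chi/\partial\l$ and $-\partial\chi/\partial\d$ as open). The paper resolves the coupling by taking the linear combination $\chi_\L'(t)=\theta\,\partial\chi_\L/\partial\l+(-\partial\chi_\L/\partial\d)$ along the direction of approach with slope $\theta\in(0,\oo)$, arriving at
\begin{equation*}
\chi_\L'(t)\geq(4d\theta+2)\,\chi_\L(t)^2\,
\frac{1-B_\L(t)/\chi_\L(t)}{1+(2d\l+4\d+\l/\theta+8d\d\theta)\,B_\L(t)},
\end{equation*}
which is a single usable ODE inequality \emph{precisely because} $\theta$ is bounded away from $0$ and $\oo$. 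This is the reason the hypothesis requires approach strictly inside the quadrant, and it is the piece your proposal needs to supply. (One more small caveat: the upper bounds $\partial\chi/\partial\l\leq 4d\chi^2$ and $-\partial\chi/\partial\d\leq 2\chi^2$ rest on $U_4\leq 0$, i.e.\ a Lebowitz-type inequality obtained via the switching lemma, rather than on {\sc gks}.)
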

Theorem~\ref{mf_thm} is proved in Section~\ref{mf_sec};
in fact we prove slightly more, see
Propositions~\ref{lb_prop} and~\ref{ub_prop}.
In proving Theorem~\ref{mf_thm} we are led
to study the \emph{bubble-diagram}
\begin{equation}\label{b2}
B=B(\d,\l,\b)=\sum_{x\in\ZZ^d}\int_{-\b/2}^{\b/2}
\mu^\b(\s(0,0)\s(x,t))^2\,dt.
\end{equation}
%and its finite-volume, positive-temperature approximation
%\begin{equation*}%\label{bub_eq}
%B_\L=
%\sum_{x\in\L}\int_0^\b\mu^\b_\L(\s(0,0)\s(x,t))^2\,dt.
%\end{equation*}
(The term `bubble-diagram' comes from analogy with a related quantity
which appears in  the study of mean-field
behaviour in the classical Ising model~\cite{aiz82,af,ag}.)
Theorem~\ref{irb_thm}, together with the Plancherel identity, 
allows us to deduce upper bounds on $B$ 
(Lemma~\ref{bub_lem2}).  In addition to such bounds we also
require  new differential
inequalities (Lemma~\ref{diff_lem}).
The method of proof would also give the critical
exponent $\g=1$ for approach to criticality along 
lines with constant $\d$ or constant $\l$,
\emph{subject to} first proving additional 
differential inequalities.  See Remark~\ref{gamma_rk}.

%In finite volume and positive temperature, 
%we define the \emph{bubble-diagram} 
%\begin{equation*}%\label{bub_eq}
%B_\L=B_\L(\d,\l,\b):=
%\sum_{x\in\L}\int_0^\b\mu^\b_\L(\s(0,0)\s(x,t))^2\,dt.
%\end{equation*}
%Theorem~\ref{irb_thm}, together with the Plancherel identity, 
%allows us to deduce an upper bound on $B_\L$ which is
%uniform in $\L$.  , so the 
%infinite-volume bubble-diagram 
%\begin{equation}\label{b2}
%B=\lim_{\L\uparrow\ZZ^d} B_\L=
%\sum_{x\in\ZZ^d}\int_0^\b\mu^\b(\s(0,0)\s(x,t))^2\,dt
%\end{equation}
%exists and is finite for each such $\l$, and obeys a similar 
%upper bound.  The same holds for the ground state
%$\b=\oo$.  Taken together, these facts imply
%\begin{lemma}\label{bub_lem}
%\hspace{1cm}
%\begin{enumerate}
%\item
%If either $\b<\oo$ and $d>4$, or $\b=\oo$ and $d>3$,
%then $\limsup_{\l\uparrow\l_\crit}B(\l)<\oo$.
%\item
%If either $\b<\oo$ and $d=4$, or $\b=\oo$ and $d=3$,
%then $B(\l)=O(|\log(\l_\crit-\l)|)$
%as $\l\uparrow\l_\crit$.
%\end{enumerate}
%\end{lemma}
%The stronger conclusions for the ground
%state arise, intuitively speaking, because the ground state model
%in $d$ dimensions behaves like a classical model in $d+1$
%dimensions.
%Lemma~\ref{bub_lem} will be proved in Section~\ref{mf_sec}.

\subsection{Discussion}\label{disc_sec}

It is interesting to note that we obtain the `classical'
critical exponent value $\g=1$ also at the 
`quantum critical point' $\b=\oo$, $\l=\l_\crit(\d)$.
Our analysis only deals with approach to criticality
with temperature (hence $\b$) kept fixed, so our results
do not rule out the possibility of a different critical
exponent value for approach to the quantum critical point
with varying temperature, as described 
in~\cite{sachdev_article}.

In the classical Ising model, the type of methods
used in this article can only give conclusions
about critical exponents down to dimension $d=4$.
We are able to obtain conclusions about the case $d=3$
essentially because the `imaginary time representation'
described in Section~\ref{gr_ssec} maps the quantum
Ising model in $d$ dimensions onto a classical model
in $d+1$ dimensions.  This intuition is in some sense 
only valid in the case $\b=\oo$ when the `imaginary time axis' 
is unbounded.

In the classical Ising model it was also possible
to use the infrared bound and differential inequalities
to determine critical exponents for the magnetization~\cite{af}.
It is to be expected that Theorem~\ref{irb_thm}
can be used to obtain similar results also for the
quantum Ising model.

Note that the arguments used to prove
Theorem~\ref{mf_thm} require
bounds on $B$ as defined above, \emph{not} on
the (one might think more natural) quantity
\begin{equation*}
\sum_{x\in\ZZ^d}\el\s^{(3)}_0\s^{(3)}_x\er_{\b}^2
=\sum_{x\in\ZZ^d}\mu^\b(\s(0,0)\s(x,0))^2,
\end{equation*}
which does not feature in this work.

It is worth remarking that we approach Theorem~\ref{irb_thm} by
working directly in the continuous set-up of %~\eqref{pf1} and
Definition~\ref{st_def}.  A natural alternative would be to work
with the discrete approximation of the partition function~\eqref{pf1}
inherent in the Lie--Trotter product formula on which
Definition~\ref{st_def} is based.  In this way certain technicalities
associated with working with continuous `time' may be avoided;  on the
other hand other issues to do with discrete approximation would be
introduced.  This discrete approximation is closely related to a
way of expressing the space--time Ising model as a (weak) limit of
classical Ising models (cf.~\cite[Section~2.2.2]{bjo_phd}),
each of which obeys a bound of the form~\eqref{cirb}.  
Unfortunately, simply taking the limit in the corresponding bound
gives only~\eqref{dtf_irb} and not the full bound of 
Theorem~\ref{irb_thm}.

Finally, although we have chosen to focus entirely on the
nearest-neighbour transverse field Ising model, it seems likely that
our arguments can be extended to other reflection positive models.
For example, it seems straightforward to extend Theorem~\ref{irb_thm}
to other interactions than nearest neighbour, such as the
Yukawa and power law potentials described
in~\cite[Section~3]{biskup_rp}.  Many of the arguments
in Section~\ref{irb_sec} apply when the
definition $\s(x,t)=(-1)^{\xi_x+|D_x\cap[0,t]|}$ in the graphical
representation~\eqref{pf1} is modified to
$\s(x,t)=\psi_x(-1)^{|D_x\cap[0,t]|}$ for more general
$\psi_x\in\RR^n$.  The proofs of the differential inequalities in
Section~\ref{mf_sec}, and hence Theorem~\ref{mf_thm}, rely on the
`random-parity representation' (a relative of the random-current
representation) and so are quite Ising-specific.  However, certain
extensions of these results
to more general translation-invariant
interactions are most likely possible, as for the classical case
treated in~\cite{aiz82}.

\subsection*{Acknowledgements}
The author would like to thank the following.
Geoffrey Grimmett for 
introducing him to the subject, and for many valuable
discussions in the early phases of the project.  Svante Janson for valuable
feedback on a draft of this article, and in particular for
suggesting an improved proof of Lemma~\ref{binary_lem}.
And finally the anonymous referees for many insightful and helpful
comments.

%%%%%%%%%%%%%%%%%%%%%%%%%%%%%%%%%%%%%%%%%%%%%%%%%%%%%%%%%%%%%%%%%%%%%

\section{The infrared bound}
\label{irb_sec}

In this section we prove the main result of this
article, Theorem~\ref{irb_thm}.  First we present
more detailed notation.

We write $\one\{A\}$ for the indicator of the
event $A$, taking value $1$ if $A$ occurs and
$0$ otherwise.
Let $N\geq1$ be an integer, and let $\L=(\ZZ/2N)^d$ be a 
torus in $d$ dimensions.
Thus $\L$ is the graph whose vertex set is the
set of vectors
$x=(x_1,\dotsc,x_d)\in\{0,1,\dotsc,2N-1\}^d$, 
and whose adjacency relation
$\sim$ is given by:  $x\sim y$ if there is $j\in\{1,\dotsc,d\}$
such that (a) $x_j-y_j\equiv 1 \mod{2N}$,
and (b) $x_k=y_k$ for all $k\neq j$.
Write $L$ for the \emph{graph Laplacian} of $\L$:
\begin{equation}\label{laplacian}
L(x,y)=d\one\{x=y\}-\frac{1}{2}\one\{x\sim y\},
\qquad x,y\in\L.
\end{equation}
We write $Lu$
for the function $\L\rightarrow\CC$ given by
the matrix-vector product
\begin{equation}\label{mat_vec}
(Lu)(x)=\sum_{y\in\L}L(x,y)u(y).
\end{equation}
For $u,v:\L\rightarrow\CC$ we write $\el u,v\er$
for the usual vector inner product,
\begin{equation}\label{dotprod}
\el u,v\er=\sum_{x\in\L} u(x)v(x)\in\CC.
\end{equation}
Note that we do not conjugate the second argument.

Throughout this section $\b$ will be finite and fixed.
Recall that $\Ob$ denotes the circle of length $\b$.
A function $f:\Ob\rightarrow\RR$ can be
thought of either as a function $\RR\rightarrow\RR$
which is periodic with period $\b$, or
simply as a function $[0,\b)\rightarrow\RR$.
We will usually take the latter viewpoint, taking care to
remember, for example, that $f$ is continuous
only if, in particular, the limits
$\lim_{t\uparrow\b}f(t)$ and 
$\lim_{t\downarrow0}f(t)$
exist and are equal, and similarly for differentiability
and other analytic properties.
The antiderivative of a measurable $f:\Ob\rightarrow\RR$
is the function $F:\Ob\rightarrow\RR$ given by
\begin{equation*}
F(t)=\int_0^t f(s)\,ds 
\mbox{ for }t\in[0,\b).
\end{equation*}

If $\bh$ is a function
$\L\times\Ob\rightarrow\CC$,  we will 
often use the 
notation $\bh=(h(x,t):x\in\L,t\in\Ob)$.
For each $x\in\L$ we then write $h(x,\cdot)$
for the function $t\mapsto h(x,t)$,
and for each $t\in\Ob$ we write $h(\cdot,t)$
for the function $x\mapsto h(x,t)$.  Note that
$h(\cdot,t):\L\rightarrow\CC$, so the notation
in~\eqref{mat_vec} and~\eqref{dotprod} 
applies to $h(\cdot,t)$.
We say that a function $\bh:\L\times\Ob\rightarrow\RR$
is \emph{bounded} if each $h(x,\cdot)$
is bounded,  \emph{differentiable}
if each $h(x,\cdot)$ is differentiable,
and similarly for other analytic properties.
We write $h'(x,t)=\tfrac{d}{dt}h(x,t)$ etc.

The measure $E[\cdot]$, which was used to define
the space--time Ising measure $\mu^\b_\L$ in
Section~\ref{gr_ssec}, may be written as a
product $E=E_\times\times E_0$. 
Here $E_\times[\cdot]$ denotes a probability measure
governing the collection $D=(D_x: x\in\L)$ of 
Poisson processes conditioned to have even size,
and $E_0[\cdot]$ denotes a probability measure
governing the random variables $\xi_x\in\{0,1\}$ (for $x\in\L$).
Recall that $\s(x,t)=(-1)^{\xi_x+|D_x\cap[0,t]|}$ and that
\begin{equation*}
Z^\b_\L=
E\Big[\exp\Big(\l\sum_{x\sim y}\int_0^\b\s(x,t)\s(y,t)\,dt\Big)\Big].
\end{equation*}
Note that if $u:\L\rightarrow\RR$, then
\begin{equation}\label{L-sum_eq}
\el Lu,u\er=\frac{1}{2}\sum_{x\sim y} (u(x)-u(y))^2.
\end{equation}
Since $\s(x,t)^2=1$ for all $x\in\L$, $t\in\Ob$,
it follows that $Z^\b_\L$ is a multiple of
\begin{equation}\label{Z0_eq}
Z(0):=E\Big[\exp\Big(
-\l\int_0^\b\el L\s(\cdot,t),\s(\cdot,t)\er
\,dt\Big)\Big].
\end{equation}
In fact,
\begin{equation*}%\label{st_def2}
\mu^\b_\L(f)=\frac{1}{Z(0)}
E\Big[f(\s)\exp\Big(
-\l\int_0^\b\el L\s(\cdot,t),\s(\cdot,t)\er
\,dt\Big)\Big].
\end{equation*}
The quantity $Z(0)$ 
is a special case of $Z(\bh)$ as defined in~\eqref{Zh_eq} below.
Since $\b$ and $\L$ are fixed in what follows, we will suppress them
from the notation $\mu^\b_\L$ and simply write $\mu$.
However, we will still write $Z^\b_\L$ to distinguish it from
$Z(0)$.

For a function $f:\L\times\Ob\rightarrow\RR$,
recall that the Fourier transform $\hat f$
is given by
\[
\hat f(k,l)=\sum_{x\in\L}\int_0^\b f(x,t)e^{ik\cdot x}e^{ilt}dt,
\quad k\in\tfrac{2\pi}{2N}\L,\,l\in\tfrac{2\pi}{\b}\ZZ.
\]
Throughout this section we will write
$c(x,t)$ for the Schwinger two-point function,
\[
c(x,t):=\mu\big(\s(0,0)\s(x,t)\big),
\quad x\in\L,\,t\in \Ob,
\]
given in~\eqref{tp}.
Note that $\hat c(k,l)\geq 0$
(indeed, $\hat c(k,l)=\mu[|\hat\s(k,l)|^2]/(\b|\L|)$).

The process $D$ is the set of discontinuities of $\s$.
Under $\mu$ it is absolutely continuous with respect to a Poisson
process of intensity $\d$ on $\L\times\Ob$, 
but %as we see in~\eqref{st_def2} 
the density depends
on $\L$.  The following lemma gives a `uniform
stochastic bound' on $D$.  For two point processes
$C$ and $D$ on $\L\times\Ob$ we say that 
$D$ is \emph{stochastically dominated} by $C$ if there
is a coupling $\PP$ of $C$ and $D$ such that 
$\PP(D\se C)=1$.

\begin{lemma}\label{muD_lem}
Under $\mu$, the process $D$ is stochastically
dominated by a Poisson process of intensity $2\d$.
\end{lemma}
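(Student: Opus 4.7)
Plan. My strategy would be to decouple the problem into two pieces: first, dominate the reference Poisson-with-even-conditioning measure by $\mathrm{Poi}(2\delta)$; second, show that the $\exp(\lambda\cdots)$ reweighting preserves the domination.

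For the first piece, note that under the reference measure $E$ (before the $\lambda$-reweighting), the processes $D_x$ are independent across $x$, each being Poi$(\delta)$ conditioned on having an even number of points. I would show Poi$(\delta)\,|\,\mathrm{even}\preceq$ Poi$(2\delta)$ using the following explicit coupling: sample $C_x\sim\,\mathrm{Poi}(2\delta)$, assign each point independently a uniform Red/Blue label, let $R_x$ denote the Red points (so $R_x\sim\,\mathrm{Poi}(\delta)$), and then condition on the event $\{|R_x|\text{ is even}\}$. Under this conditioning, $R_x$ has exactly the desired law, and $R_x\subseteq C_x$ by construction. Moreover the conditional law of $C_x$ has density with respect to unconditional Poi$(2\delta)$ equal to $2/(1+e^{-2\delta\beta})$ if $|C_x|=0$ and $1/(1+e^{-2\delta\beta})$ if $|C_x|\geq 1$, which is a non-increasing function of $|C_x|$; hence $C_x\,|\,\mathrm{event}\preceq\,\mathrm{Poi}(2\delta)$, and chaining with $R_x\subseteq C_x$ gives the claim. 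Taking products over $x$ yields the $E$-marginal of $D$ dominated by the product Poi$(2\delta)$ processes.

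For the second piece, integrating out $\xi$ expresses the $\mu$-marginal of $D$ as the $E$-measure reweighted by $w(D):=E_\xi[\exp(\lambda\sum_{x\sim y}\int_0^\beta\sigma(x,t)\sigma(y,t)\,dt)]$. If one could show $w(D)$ is a non-increasing function of $D$ on the support of $E$, then Holley's inequality would give $\mu\preceq E\preceq\,\mathrm{Poi}(2\delta)$ and we would be done.

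Main obstacle. A direct computation reveals $w(D) = \prod_{x\sim y}\cosh(\lambda I_{xy}(D))\cdot\sum_{S\text{ even}}\prod_{e\in S}\tanh(\lambda I_e(D))$ where $I_{xy}(D)=\int_0^\beta (-1)^{|D_x\cap[0,t]|+|D_y\cap[0,t]|}\,dt\in[-\beta,\beta]$, and adding a pair of jumps to some $D_x$ can actually increase $|I_{xy}(D)|$, hence increase $w$ (for instance, starting near $I_{xy}=0$ and inserting a jump-pair inside an agreement interval shifts $I_{xy}$ away from zero). So the naive FKG/monotonicity of $w$ strategy fails, and the proof must exploit more specific structure of the graphical representation. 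I expect the author resolves this by using the FK/random-cluster representation of the quantum Ising model, in which the deaths form an auxiliary Poi$(2\delta)$ process $\widetilde D$ with $D\subseteq\widetilde D$ realized as the subset of deaths at which the two sides lie in different clusters with differing cluster signs; the Poi$(2\delta)$ intensity of $\widetilde D$ is then built in, and the containment $D\subseteq\widetilde D$ yields the lemma directly (modulo handling the cluster-weight reweighting via an independence-of-ghost-deaths argument along the lines of the Red/Blue coupling above).
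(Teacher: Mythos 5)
Your Step~1 (the Red/Blue coupling showing that $\mathrm{Poi}(\delta)$ conditioned to have an even number of points is stochastically dominated by $\mathrm{Poi}(2\delta)$) is correct, and your diagnosis of the main obstacle---that $w(D)$ is not monotone in $D$, so a direct Holley argument on $\mu$ is doomed---is also accurate. You are likewise right that the paper's proof goes through the space--time random-cluster (\fk) representation, coupling $\sigma$ to a realization $\omega$ of $\phi_{2;\lambda,\delta}$ whose cut process $C$ contains $D$.

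However, your description of the final step is wrong on a substantive point: the cut process of the $q=2$ \fk\ measure is \emph{not} a Poisson process of intensity $2\delta$ ``built in,'' because the cluster weight $q^{k(\omega)}$ tilts the cut process nontrivially away from any Poisson law. The paper instead invokes a comparison inequality across cluster weights (Corollary~2.2.13 of the author's thesis): the cut process under $\phi_{2;\lambda,\delta}$ is stochastically dominated by the cut process under $\phi_{1;\lambda/2,2\delta}$, and only in the latter ($q=1$, i.e.\ no cluster weight at all) is the cut process a genuine Poisson process of intensity $2\delta$. So the argument is: $D\subseteq C$ (Edwards--Sokal coupling), then $C$-law under $q=2$ dominated by $C$-law under $q=1$ with doubled death rate, the latter being $\mathrm{Poi}(2\delta)$. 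What you call ``handling the cluster-weight reweighting'' is exactly this \fk\ comparison inequality, and it does not reduce to an independence or thinning argument in the style of your Step~1; consequently your Step~1, while correct in isolation, plays no role in the paper's proof and does not combine with the \fk\ containment to close the argument.
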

\begin{proof}
The proof uses the space--time random-cluster 
(or \fk-) representation, which is described 
in~\cite[Chapter~2]{bjo_phd}.
(The space--time Ising measure is defined slightly
differently in~\cite{bjo_phd} than in the present work,
but the equivalence of the definitions follows from
elementary properties of Poisson processes.)
Let $\phi_{q;\l,\d}$ denote the space--time random-cluster
measure on $\L\times\Ob$.  As described 
in~\cite[Section~2.1]{bjo_phd}, a realization of $\s$ with law
$\mu$ can be obtained from a realization $\om$
with law $\phi_{2;\l,\d}$ by assigning to each connected
component spin $\pm 1$ independently with probability
$1/2$ each.  
Let $C$ denote the process of `cuts'
in $\om$.  It follows that $D\se C$.  Moreover, 
by~\cite[Corollary~2.2.13]{bjo_phd}, the process of cuts
under $\phi_{2;\l,\d}$ is
stochastically dominated by the process of cuts under 
$\phi_{1;\l/2,2\d}$.  Under the latter measure, the process
of cuts is a Poisson process with
intensity $2\d$.
\end{proof}

We will prove Theorem~\ref{irb_thm} by 
establishing a variational result, which we describe
in the next subsection.

\subsection{Gaussian domination}

For $\bh$ bounded and 
\emph{twice} differentiable we define the quantity
\begin{multline}\label{Zh_eq}
Z(\bh)=E\Big[\exp\Big(
-\l\int_0^\b
\el L[\s(\cdot,t)+h(\cdot,t)],[\s(\cdot,t)+h(\cdot,t)]\er\,dt+\\
+\frac{1}{2\d}\sum_{x\in\L}\int_0^\b h''(x,t)\s(x,t)\,dt\Big)\Big].
\end{multline}
We will deduce Theorem~\ref{irb_thm} from an
upper bound on $Z(\bh)$.  The type of bound we will derive
is similar to what is known as `Gaussian domination',
although we will not pursue any connections to Gaussian
gradient models here.  
%(As remarked in Section~\ref{disc_sec} one could
%also approach Theorem~\ref{irb_thm} via discretization, which would
%lead one to consider upper bounds on the quantity obtained
%from~\eqref{Zh_eq} by discretizing the integrals over $\Ob$.)

If $\bh$ has the special property that there is 
a function $h:\Ob\rightarrow\RR$ such that $h(x,t)=h(t)$
for all $x\in\L$, then we will write $Z(h)$ for $Z(\bh)$.
Using~\eqref{L-sum_eq} we see that
\[
Z(h)=E\Big[\exp\Big(-\l\int_0^\b
\el L\s(\cdot,t),\s(\cdot,t)\er\,dt
+\frac{1}{2\d}\sum_{x\in\L}\int_0^\b h''(t)\s(x,t)\,dt
\Big)\Big].
\]
Clearly the same expression is valid if each
$h(x,\cdot)=h(\cdot)$ almost everywhere.
If, moreover, $h''(t)=0$
for all $t\in\Ob$ then it follows that
$Z(h)=Z(0)$ as given in~\eqref{Z0_eq}.

The proof of Theorem~\ref{irb_thm} rests on two
main lemmas, of which the following is the first:
\begin{lemma}\label{spatial_gd_lem}
Let $\bh$ be twice differentiable.  Then there is
some $z\in\L$
such that, writing $h(t)$ for $h(z,t)$, we have
$Z(\bh)\leq Z(h)$.
\end{lemma}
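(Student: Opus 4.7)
The plan is to prove Lemma~\ref{spatial_gd_lem} via reflection positivity in the spatial directions, followed by a chessboard iteration over the torus $\L$. The target is the bound
\[
Z(\bh)\leq\prod_{z\in\L}Z(\bh_z)^{1/|\L|}\leq\max_{z\in\L}Z(\bh_z),
\]
where $\bh_z(x,t):=h(z,t)$ is the spatially constant function agreeing with $\bh$ at site $z$. Choosing $z$ to realize the maximum and observing via~\eqref{L-sum_eq} that $Z(\bh_z)$ reduces to $Z(h)$ for $h(t):=h(z,t)$ then gives the lemma.

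The first and main step is a single reflection-positivity inequality. Pick a coordinate direction and a hyperplane bisecting $\L$ into halves $\L_\pm$, with reflection $\theta:\L\to\L$. Rewriting the quadratic form in~\eqref{Zh_eq} via~\eqref{L-sum_eq} and expanding the squares shows that the action decomposes into terms local to $\L_+$, terms local to $\L_-$, and crossing contributions. The $h''\s$ terms in~\eqref{Zh_eq} and the square pieces $[\s(x,t)+h(x,t)]^2$ arising from edges crossing $\theta$ are each local to a single side; for a crossing edge with $x\in\L_+$ and $\theta x\in\L_-$, expanding $[\s(x,t)+h(x,t)][\s(\theta x,t)+h(\theta x,t)]$ separates into $\s(x,t)\s(\theta x,t)$ (of the form $F\cdot\theta F$), single-side terms linear in $\s(x,t)$ or $\s(\theta x,t)$ (absorbable into one-sided factors), and a constant depending only on $\bh$. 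Since $E=E_\times\times E_0$ factorizes over $x\in\L$ (the conditioning on $|D_x|$ being even is per-site), $E$ is reflection positive with respect to $\theta$. The Cauchy--Schwarz inequality for reflection-positive functionals then yields
\[
Z(\bh)^2\leq Z(\bh^+)\cdot Z(\bh^-),
\]
where $\bh^\pm$ denotes the $\theta$-symmetrization of $\bh|_{\L_\pm}$ (and is still twice differentiable in $t$, permitting iteration).

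Iterating this inequality across the full family of coordinate reflections of the torus, the standard chessboard argument (as reviewed in~\cite{biskup_rp}) collapses $\bh$ to spatially constant profiles, delivering the first displayed inequality above; the second follows since a geometric mean never exceeds the maximum. The main obstacle is verifying the decomposition underlying the reflection-positivity step: in the continuous-time setting of $\Ob$, one must check that all cross-plane contributions are either absorbable into single-side exponentials or expressible as $\sum_\alpha c_\alpha F_\alpha\cdot\theta F_\alpha$ with $c_\alpha\geq 0$, and that the associated integrals and exponentials raise no measurability or integrability issues. Once this decomposition is in place, the Cauchy--Schwarz step and the chessboard iteration transfer directly from the classical Ising setting with essentially no new difficulty specific to the quantum model.
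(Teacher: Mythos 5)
Your proposal is correct, and its key mechanism — a single-plane reflection-positivity inequality $Z(\bh)^2\leq Z(\bh^+)\,Z(\bh^-)$ obtained by splitting the torus, decomposing the quadratic form via~\eqref{L-sum_eq} into one-sided and crossing contributions, and applying Cauchy--Schwarz to the site-factorized measure $E$ — is exactly the paper's Lemma~\ref{spatial_symm_lem}(1), proved there through the abstract Cauchy--Schwarz-type Lemma~\ref{spatial_cs_lem}. Where you genuinely depart from the paper is in the iteration step. You invoke the full chessboard estimate, delivering the geometric-mean bound $Z(\bh)\leq\prod_{z\in\L}Z(\bh_z)^{1/|\L|}$ and hence $Z(\bh)\leq\max_z Z(\bh_z)$. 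The paper avoids explicit chessboarding via a finite extremal argument: it defines a discrepancy counter $N(\bh)$ (the number of edges with $h(x,\cdot)\neq h(y,\cdot)$), shows in Lemma~\ref{spatial_symm_lem}(2) that reflection strictly decreases $\min\{N(\bh\circ\rho^+),N(\bh\circ\rho^-)\}$ whenever there are inhomogeneous crossing edges, and then picks among the finitely many $\tau:\L\to\L$ one that maximizes $Z(\bh\circ\tau)$ and, among those, minimizes $N(\bh\circ\tau)$; a nonzero $N$ would force one reflected $Z$ to strictly exceed the maximum, a contradiction, so $\bh\circ\tau$ must be spatially constant. Your chessboard route is the classical one (cf.~\cite{biskup_rp}) and yields a quantitatively stronger intermediate inequality, at the cost of the bookkeeping needed to iterate over all torus reflections; the paper's extremal argument is lighter-weight and delivers exactly the existence statement required. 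Both approaches are sound.
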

Lemma~\ref{spatial_gd_lem} will be proved in 
Section~\ref{spatial_pf_sec}.  

Next,
write $|D|:=\sum_{x\in\L}|D_x|$ for the total 
number of points in $D$, and define
\begin{equation}\label{zeta_def}
\z(r)=\mu[\cosh(r/\d)^{|D|}],\quad r\in\RR.
\end{equation}
(Recall that we write $\mu[\cdot]$ for expectation wrt $\mu$.)
It follows from Lemma~\ref{muD_lem} that $\z(r)$
is analytic in $r\in\RR$, and hence
\begin{equation}\label{2ndorder}
\z(r)=1+\frac{r^2}{2\d^2}\mu|D|+O(r^4),
\quad\mbox{as }r\rightarrow0.
\end{equation}
Note that
\begin{equation}\label{muD_eq}
\mu|D|=|\L|\mu|D_0|\leq 2\b\d|\L|,
\end{equation}
by translation invariance and Lemma~\ref{muD_lem}.

For $f:\Ob\rightarrow\RR$, let 
\[
\|f\|_2=\Big(\int_0^\b f(t)^2\,dt\Big)^{1/2}\mbox{ and }
\|f\|_\oo=\mathrm{essup}_{t\in\Ob} |f(t)|
\]
denote the $L^2$- and $L^\oo$-norms of $f$, respectively.
The second main step in proving Theorem~\ref{irb_thm}
is to establish the following result:
\begin{lemma}\label{temporal_gd_lem}
Let $h:\Ob\rightarrow\RR$ be twice
differentiable.  Then \[Z(h)\leq \z(\| h'\|_\oo)Z(0).\]
\end{lemma}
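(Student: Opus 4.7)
To prove the lemma, observe that $Z(h)/Z(0) = \mu[\exp(E)]$ where
\[
E := \frac{1}{2\d}\sum_{x \in \L}\int_0^\b h''(t)\s(x,t)\,dt,
\]
so it suffices to show $\mu[\exp(E)] \leq \z(\|h'\|_\infty)$. First I integrate by parts in time. Because $h$ is periodic on the circle $\Ob$ and $\s(x,\cdot)$ is also periodic (the latter since $|D_x|$ is even by construction), the boundary terms vanish, giving
\[
\int_0^\b h''(t)\s(x,t)\,dt = 2\sum_{s \in D_x}h'(s)\s(x,s^-),
\]
and hence $E = \tfrac{1}{\d}\sum_{x \in \L}\sum_{s \in D_x}h'(s)\s(x,s^-)$. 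Since the global spin-flip $\xi \leftrightarrow 1-\xi$ preserves $\mu$ and sends $E \mapsto -E$, we have $\mu[\exp(E)] = \mu[\cosh(E)]$.

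The strategy for the main estimate $\mu[\cosh(E)] \leq \mu[\cosh(\|h'\|_\infty/\d)^{|D|}]$ is to isolate one factor of $\cosh(\|h'\|_\infty/\d)$ per cut. Using the identity $\exp(ax) = \cosh(a)(1 + x\tanh(a))$ for $x \in \{\pm 1\}$, one gets
\[
\exp(E) = \Big(\prod_{x,\,s \in D_x}\cosh(h'(s)/\d)\Big)\prod_{x,\,s \in D_x}\big(1 + \s(x,s^-)\tanh(h'(s)/\d)\big).
\]
The first product is bounded pointwise by $\cosh(\|h'\|_\infty/\d)^{|D|}$. Expanding the second product as $\sum_T \prod_{(x,s) \in T}\s(x,s^-)\tanh(h'(s)/\d)$ over finite subsets $T$ of the set of cuts, the global flip symmetry annihilates every term with $|T|$ odd, reducing the task to showing that the $\mu$-expectation of the resulting even-$T$ sum is at most $1$.

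\emph{The main obstacle} is exactly this even-$T$ bookkeeping, since a pointwise-in-$D$ version of the target inequality fails. For instance, with $|\L| = 1$, $|D_0| = 2$ and $h'(s_1) = -h'(s_2) = \|h'\|_\infty$, one computes $\mu[\cosh(E) \mid D] = \cosh(2\|h'\|_\infty/\d) > \cosh(\|h'\|_\infty/\d)^2$. The required cancellations must therefore come from integrating over cut positions, leveraging: (i) the \emph{anti-correlation} of consecutive cuts at a single site, since $\s(x,s_i^-)\s(x,s_{i+1}^-) = -1$ deterministically; and (ii) the Poisson structure of $D$, dominated by intensity $2\d$ via Lemma~\ref{muD_lem}. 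A natural mechanism is the space--time random-cluster representation used in the proof of Lemma~\ref{muD_lem}: conditional on the FK configuration $\om$, spins of distinct connected components are independent and uniform on $\{\pm 1\}$, so $\mu[\cosh(E) \mid \om]$ factorises as a product of component-wise $\cosh$-factors, reducing the estimate to a combinatorial inequality on how the $h'$-weighted cuts distribute among components. Making this decoupling tight enough that the resulting bound is controlled by $\cosh(\|h'\|_\infty/\d)^{|D|}$ in $\mu$-expectation is, I expect, where the bulk of the technical work lies.
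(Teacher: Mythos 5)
Your reformulation is correct: the integration by parts, the use of global $\pm$-symmetry to pass to $\cosh$, the identity $e^{a\s}=\cosh(a)(1+\s\tanh a)$ for $\s\in\{\pm1\}$, and in particular your two-cut counterexample, which correctly shows that $\mu[\cosh(E)\mid D]$ can exceed $\cosh(\|h'\|_\infty/\d)^{|D|}$. That observation is genuinely important: it rules out any argument that fixes $D$ and bounds the conditional expectation, and shows the claimed inequality holds only after averaging over the positions of the cuts.

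But that is exactly where your proposal stops. The ``even-$T$ bookkeeping'' and the FK-decoupling idea are sketched, not executed, and it is far from clear the latter can close the gap: conditionally on the FK configuration each component carries a single uniform sign, so the sum over cuts within one component has the same structure as your $|\L|=1$ counterexample, and you are back to needing cancellation from the distribution of cut \emph{positions}. You have identified the obstruction but not supplied the mechanism that overcomes it; you say yourself that this is ``where the bulk of the technical work lies.'' The paper resolves it by a different route entirely. It never attempts a pointwise or per-configuration bound. Instead it establishes reflection positivity of $E_\times$ in the imaginary-time direction (Lemma~\ref{cs_vert_lem}), deduces $Z(h)\leq\max\{Z(h_+),Z(h_-)\}$ (Lemma~\ref{vert_pm_lem}), and iterates this symmetrization at dyadic scales (Lemma~\ref{snippet_lem}) to drive $h$ toward a sawtooth $W_{r,n}$ with $|r|\leq\|h'\|_\infty$, while never decreasing $Z$. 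The limit $Z(W_{r,n})/Z(0)\to\z(r)$ is then computed directly (Lemma~\ref{white_lem}) via an equidistribution statement for binary digits of the cut times (Lemmas~\ref{binary_lem} and~\ref{density_lem}), and the argument closes by continuity of $Z$ in the $\|\cdot\|'$-norm (Lemma~\ref{cty_lem}). So your preliminary steps are sound and your diagnosis of the difficulty is sharp, but the proof proper is missing: the variational/symmetrization machinery that the paper uses to turn the problem into an asymptotic computation for the extremal sawtooth is absent, and no substitute for it is provided.
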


Lemma~\ref{temporal_gd_lem} will be proved
in Section~\ref{temp_pf_sec}.
Whereas Lemma~\ref{spatial_gd_lem} can be proved 
using arguments similar to those for previously
known infrared bounds, Lemma~\ref{temporal_gd_lem}
requires new ideas.
We now show how Theorem~\ref{irb_thm} follows
from Lemmas~\ref{spatial_gd_lem} 
and~\ref{temporal_gd_lem}.
\begin{proof}[Proof of Theorem~\ref{irb_thm}]
Let $\bh$ be twice differentiable and let
$h(\cdot)=h(z,\cdot)$ be as in Lemma~\ref{spatial_gd_lem}.
We will prove the result by making a particular 
choice of $\bh$, but for the time being we assume
only that there is $q\in(0,1]$ such that the set
$\{t\in\Ob:|h'(t)|\geq \|h'\|_\oo/2\}$ 
has Lebesgue measure at least $q\b$.  Then
\[
\|h'\|_2^2=\int_0^\b h'(t)^2dt\geq q\b (\|h'\|_\oo/2)^2,
\]
so by monotonicity of $\z$ we have that
\[
\z(\|h'\|_\oo)\leq
 \z\Big(\frac{2}{\sqrt{q\b}}\|h'\|_2\Big).
\]
Hence by Lemmas~\ref{spatial_gd_lem} and~\ref{temporal_gd_lem},
\[
Z(\bh)\leq Z(h)\leq \z\Big(\frac{2}{\sqrt{q\b}}\|h'\|_2\Big) Z(0).
\]
Using the fact that 
$\el L(h+\s),(h+\s)\er=2\el Lh,\s\er+\el Lh,h\er+\el L\s,\s\er$
and dividing by $Z(0)$ it follows that
\begin{multline}\label{tgd1}
\mu\Big[\exp\Big(
-2\l\int_0^\b\el Lh(\cdot,t),\s(\cdot,t)\er\,dt
+\frac{1}{2\d}\sum_{x\in\L}\int_0^\b h''(x,t)\s(x,t)\Big)\Big]\\
\leq \z\Big(\frac{2}{\sqrt{q\b}}\|h'\|_2\Big)
\exp\Big(\l\int_0^\b\el 
Lh(\cdot,t),h(\cdot,t)\er\,dt\Big).
\end{multline}
Replace $\bh$ by $\a\bh=(\a h(x,t):x\in\L,t\in\Ob)$ for
$\a>0$ and expand $\z$ and the exponentials 
in~\eqref{tgd1} as power series.
In the left-hand-side, the term of order $\a$ as $\a\rightarrow0$
is
\[
\mu\Big[-\l\int_0^\b\el Lh(\cdot,t),\s(\cdot,t)\er\,dt
+\frac{1}{2\d}\sum_{x\in\L}\int_0^\b h''(x,t)\s(x,t)\Big]=0
\]
by the $\pm$-symmetry of $\s$ under $\mu$.  Using~\eqref{2ndorder}
we find, on comparing terms of order $\a^2$, that
\begin{multline}\label{o2}
\frac{1}{2}
\mu\Big[\Big(2\l\int_0^\b\el Lh(\cdot,t),\s(\cdot,t)\er\,dt
-\frac{1}{2\d}\sum_{x\in\L}\int_0^\b h''(x,t)\s(x,t)\Big)^2\Big]\\
\leq \frac{2\mu|D|}{\d^2q\b}\int_0^\b h'(t)^2\,dt+
\l\int_0^\b\el Lh(\cdot,t),h(\cdot,t)\er\,dt.
\end{multline}

Now let $g(x,t)=a(x,t)+i b(x,t)\in\CC$, where 
$a(\cdot,\cdot),b(\cdot,\cdot):\L\times\Ob\rightarrow\RR$
are twice differentiable.
Assume each $a(x,\cdot)$ and $b(x,\cdot)$ satisfies 
the assumptions on $h$ above, with
the same $q$.  Since
\[
\begin{split}
\el Lg(\cdot,t),\s(\cdot,t)\er&=
\el La(\cdot,t),\s(\cdot,t)\er+i\el Lb(\cdot,t),\s(\cdot,t)\er,\\
\el Lg(\cdot,t),\overline{g(\cdot,t)}\er&=
\el La(\cdot,t),a(\cdot,t)\er+\el Lb(\cdot,t),b(\cdot,t)\er,\\
g''(x,t)&=a''(x,t)+i b''(x,t),\mbox{ and}\\
|g'(x,t)|^2&=a'(x,t)^2+b'(x,t)^2,
\end{split}
\]
it follows from~\eqref{o2} that
\begin{multline}\label{f1}
\mu\Big[\Big|2\l\int_0^\b\el Lg(\cdot,t),\s(\cdot,t)\er\,dt
-\frac{1}{2\d}\sum_{x\in\L}\int_0^\b g''(x,t)\s(x,t)\Big|^2\Big]\\
\leq \frac{4\mu|D|}{\d^2q\b}\int_0^\b(a'(z_1,t)^2+b'(z_2,t)^2)\,dt
+2\l\int_0^\b\el Lg(\cdot,t),\overline{g(\cdot,t)}\er\,dt,
\end{multline}
for some $z_1,z_2\in\L$.  

We now apply~\eqref{f1} with
\[
g(x,t)=e^{ik\cdot x}e^{ilt}=
\cos(k\cdot x+lt)+i\sin(k\cdot x+lt),
\quad k\in\tfrac{2\pi}{2N}\L,\,l\in\tfrac{2\pi}{\b}\ZZ.
\]
Then $g$ satisfies the assumptions above, with % $\|a'\|_\oo=\|b'\|_\oo=1$ and
$q=2/3$.  Since $a'(z_1,t)^2+b'(z_2,t)^2\leq 2l^2$, 
the first term on the right-hand-side
in~\eqref{f1} is at most 
\[
\frac{8l^2\mu|D|}{2\d^2/3}\leq \frac{24\b|\L|}{\d}l^2.
\]
Here we used also~\eqref{muD_eq}.  Next, 
\[
\begin{split}
(Lg(\cdot,t))(x)&=\sum_{y\in\L}L(x,y)g(y,t)
=\sum_{y\in\L}L(0,y-x)e^{ik\cdot(y-x)}e^{ik\cdot x}e^{ilt}\\
&=\hat L(k)g(x,t),
\end{split}
\]
so the second term on
the right-hand-side of~\eqref{f1} is
\[
2\l\int_0^\b\el Lg(\cdot,t),\overline{g(\cdot,t)}\er\,dt
=2\l \hat L(k)\b|\L|.
\]
In the left-hand-side of~\eqref{f1} we have
\[
2\l\int_0^\b\el Lg(\cdot,t),\s(\cdot,t)\er\,dt
=2\l \hat L(k)\sum_{x\in\L}\int_0^\b\s(x,t)e^{ik\cdot x}e^{ilt}\,dt,
\]
and
\[
-\frac{1}{2\d}\sum_{x\in\L}\int_0^\b g''(x,t)\s(x,t)\,dt=
\frac{l^2}{2\d}\sum_{x\in\L}\int_0^\b \s(x,t)e^{ik\cdot x}e^{ilt}\,dt.
\]
Using the relation $|z|^2=z\bar z$ and the translation-invariance
of $\mu$, it follows that the left-hand-side of~\eqref{f1} equals
\[
\Big(2\l \hat L(k)+\frac{l^2}{2\d}\Big)^2\b|\L|\hat c(k,l).
\]
Putting it all together gives
\begin{equation}\label{messy_bound}
\hat c(k,l)\leq 
\frac{2\l\hat L(k)+48l^2/2\d}{(2\l \hat L(k)+l^2/2\d)^2}
\leq \frac{48}{2\l \hat L(k)+l^2/2\d},
\end{equation}
as required.
\end{proof}

\subsection{Proof of Lemma~\ref{spatial_gd_lem}}
\label{spatial_pf_sec}

Let $\tau:\L\rightarrow\L$ be any function.  We extend
$\tau$ to a function $\L\times\Ob\rightarrow\L\times\Ob$,
which we also denote by $\tau$, by letting $\tau(x,t)=(\tau(x),t)$.
Let $\bh=(h(x,t):x\in\L,t\in\Ob)$. 
We write $h\circ\tau$ for the usual composition of the 
functions $h$ and $\tau$, given by 
$(h\circ\tau)(x,t)=h(\tau(x,t))=h(\tau(x),t)$.  We also write
$\bh\circ\tau=((h\circ\tau)(x,t):x\in\L,t\in\Ob)$.
Note that if $\tau_1,\tau_2:\L\rightarrow\L$
then $(\bh\circ\tau_1)\circ\tau_2=\bh\circ(\tau_1\circ\tau_2)$,
by the usual associativity of function composition.

A function $\a:\L\rightarrow\L$ is an \emph{automorphism}
if, firstly, it is a bijection, and, secondly, $x\sim y$
if and only if $\a(x)\sim\a(y)$ for all $x,y\in\L$.
Since $Z(\bh)$ only depends on $\L$ through its 
connectivity structure, we see that
\begin{equation}\label{auto_eq}
Z(\bh\circ\a)=Z(\bh)\mbox{ for all automorphisms }
\a:\L\rightarrow\L.
\end{equation}
For any $y\in\L$ the map $\a:x\mapsto x+y$
is an automorphism (addition is coordinate-wise and 
interpreted modulo $2N$).
Also, any permutation of the coordinates $(x_1,\dotsc,x_d)$
is an automorphism.

The following functions 
$\rho,\rho^+,\rho^-:\L\rightarrow\L$ will be particularly
important in our proof of Lemma~\ref{spatial_gd_lem}.  
We let $\rho$ be the automorphism
of $\L$ given by:
\[
\rho:(x_1,x_2,\dotsc,x_d)\mapsto(2N-1-x_1,x_2,\dotsc,x_d).
\]
We may think of $\rho$ geometrically as 
a `reflection' in a plane parallel to,
and `just to the left of', the first coordinate plane.
Writing
\begin{equation*}
\begin{split}
\L^+&=\{(x_1,\dotsc,x_d)\in\L:0\leq x_1\leq N-1\},\mbox{ and}\\
\L^-&=\{(x_1,\dotsc,x_d)\in\L:N\leq x_1\leq 2N-1\},
\end{split}
\end{equation*}
it follows that $\rho$ bijectively maps $\L^+$
to $\L^-$ and $\L^-$ to $\L^+$.  Next we define
the functions $\rho^+,\rho^-:\L\rightarrow\L$ by
\begin{equation*}
\rho^+(x)=\left\{\begin{array}{ll}
x, & \mbox{if } x\in\L^+,\\
\rho(x), & \mbox{if } x\in\L^-,
\end{array}\right.
\quad\mbox{and}\quad
\rho^-(x)=\left\{\begin{array}{ll}
x, & \mbox{if } x\in\L^-,\\
\rho(x), & \mbox{if } x\in\L^+.
\end{array}\right.
\end{equation*}
Note that $\rho^+$ and $\rho^-$ are \emph{not} bijections,
and in particular not automorphisms.

For $\bh:\L\times\Ob\rightarrow\RR$ we define the following
two numbers:
\begin{enumerate}
\item $N(\bh)$ is the number of unordered pairs of 
adjacent elements $x\sim y$ of $\L$ such that 
$h(x,\cdot)\neq h(y,\cdot)$;
\item $N^\pm(\bh)$ is the number of pairs of 
adjacent elements $x\sim y$ such that $x\in\L^+$,
$y\in\L^-$, and  $h(x,\cdot)\neq h(y,\cdot)$.
\end{enumerate}
Equality of functions may here be interpreted pointwise
or in the almost-everywhere sense, this makes no difference
to our results.
Lemma~\ref{spatial_gd_lem} follows from the following
result:
\begin{lemma}
For any bounded, twice 
differentiable $\bh:\L\times\Ob\rightarrow\RR$
we have that 
\begin{enumerate}\label{spatial_symm_lem}
\item $Z(\bh)^2\leq Z(\bh\circ\rho^+)Z(\bh\circ\rho^-)$, and
\item $N(\bh)\geq\min\{N(\bh\circ\rho^+),N(\bh\circ\rho^-)\}$,
this inequality being strict if $N^\pm(\bh)>0$.
\end{enumerate}
\end{lemma}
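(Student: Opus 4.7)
The plan is to prove Lemma~\ref{spatial_symm_lem} via a reflection positivity / Cauchy--Schwarz argument, with part~(2) following from a direct combinatorial inspection of $\rho^+$ and $\rho^-$.

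\textbf{Part (1).} First I would use~\eqref{L-sum_eq} to express the Laplacian term in $Z(\bh)$ as $\tfrac12\sum_{x\sim y}\int_0^\b((\s+h)(x,t)-(\s+h)(y,t))^2\,dt$ and split the edge sum into three parts: edges inside $\L^+$, edges inside $\L^-$, and the crossing edges, each of the latter being of the form $\{u,\rho(u)\}$ with $u\in\L^+$. The on-site sum $\tfrac{1}{2\d}\sum_x\int_0^\b h''(x,t)\s(x,t)\,dt$ splits trivially over $x\in\L^+$ and $x\in\L^-$. Expanding $(a-b)^2=a^2-2ab+b^2$ at each crossing edge and absorbing the $a^2,b^2$ pieces into the corresponding in-side contributions, the exponent in $Z(\bh)$ takes the form
\[
\tilde A_+(\s^+,h^+)+\tilde A_-(\s^-,h^-)+\sum_j 2\l\int_0^\b a_j(t)\,b_j(t)\,dt,
\]
where $a_j(t)=(\s+h)(u_j,t)$ depends only on $(\s^+,h^+)$ and $b_j(t)=(\s+h)(\rho(u_j),t)$ depends only on $(\s^-,h^-)$, writing $\s^\pm$ for $\s|_{\L^\pm\times\Ob}$. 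Two structural facts are decisive: under $E$ the families $\s^+$ and $\s^-$ are independent (since the $D_x$ and $\xi_x$ are independent across~$x$), and the joint law of $\s$ is invariant under the graph automorphism $\rho$.

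Expanding each $\exp(2\l\int a_jb_j\,dt)$ as a power series and then as multi-dimensional integrals in $t$, I would obtain
\[
Z(\bh)=\sum_{(n_j)}\prod_j\frac{(2\l)^{n_j}}{n_j!}\int d\vec t\;P(\vec t,\vec n)\,Q(\vec t,\vec n),
\]
with $P(\vec t,\vec n)=E^+\big[\exp(\tilde A_+)\prod_{j,k}a_j(t_k^j)\big]$ and $Q(\vec t,\vec n)=E^-\big[\exp(\tilde A_-)\prod_{j,k}b_j(t_k^j)\big]$. Viewing this as an inner product against the positive measure on $(\vec n,\vec t)$ with density $\prod_j(2\l)^{n_j}/n_j!$ times Lebesgue, Cauchy--Schwarz yields $Z(\bh)^2\leq\|P\|^2\cdot\|Q\|^2$. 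Running the same expansion for $Z(\bh\circ\rho^+)$, and noting that the reflected field satisfies $(h\circ\rho^+)(u)=(h\circ\rho^+)(\rho(u))=h(u)$ at every crossing edge, the two crossing-edge factors become identical copies of the $+$-side factor (one viewed on $\L^+$, the other as its $\rho$-image on $\L^-$); the reflection invariance of the law of $\s$ then lets me rewrite the $E^-$-expectation as a second $E^+$-expectation, identifying $\|P\|^2$ with $Z(\bh\circ\rho^+)$. Symmetrically $\|Q\|^2=Z(\bh\circ\rho^-)$, proving (1).

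\textbf{Part (2).} Let $N^{++}(\bh)$ and $N^{--}(\bh)$ count the edges inside $\L^+$, respectively $\L^-$, on which $h$ takes distinct values, so that $N(\bh)=N^{++}(\bh)+N^{--}(\bh)+N^\pm(\bh)$. Since $(h\circ\rho^+)(x)=h(x)$ for $x\in\L^+$ and $h(\rho(x))$ for $x\in\L^-$, the field $h\circ\rho^+$ is $\rho$-invariant, so it takes equal values across every crossing edge and $N^\pm(\bh\circ\rho^+)=0$. Trivially $N^{++}(\bh\circ\rho^+)=N^{++}(\bh)$, and because $\rho$ maps edges of $\L^-$ bijectively to edges of $\L^+$, one has $N^{--}(\bh\circ\rho^+)=N^{++}(\bh)$ as well. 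Thus $N(\bh\circ\rho^+)=2N^{++}(\bh)$, and symmetrically $N(\bh\circ\rho^-)=2N^{--}(\bh)$, so
\[
\min\{N(\bh\circ\rho^+),N(\bh\circ\rho^-)\}=2\min\{N^{++}(\bh),N^{--}(\bh)\}\leq N^{++}(\bh)+N^{--}(\bh)\leq N(\bh),
\]
with strict inequality whenever $N^\pm(\bh)>0$.

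The main obstacle is the Cauchy--Schwarz step in part~(1): one must rigorously justify interchanging the countable sum, the time integrals, and the expectation $E$ (which should follow from boundedness of $\bh$ together with the stochastic domination of $|D|$ provided by Lemma~\ref{muD_lem}), and then carefully track how the $h$-values at crossing edges behave under composition with $\rho^+$ to confirm that $\|P\|^2$ equals $Z(\bh\circ\rho^+)$ exactly, rather than a related partition function. Part~(2) is purely combinatorial and presents no real difficulty.
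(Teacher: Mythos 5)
Your proposal is correct and follows essentially the same route as the paper. You decompose the edge set into $\L^+$-internal, $\L^-$-internal, and crossing edges, keep the crossing-edge diagonal contributions on the appropriate sides, expand the exponential of the cross-terms as a power series, and apply Cauchy--Schwarz termwise using the independence of $\s|_{\L^+}$ and $\s|_{\L^-}$ under $E$ together with the reflection invariance of the law of $\s$; part (2) is the same combinatorial bookkeeping with $N=N^{++}+N^{--}+N^\pm$ and $N(\bh\circ\rho^\pm)=2N^{\pm\pm}$. The only stylistic difference is that the paper isolates the reflection-positivity inequality into a separate abstract Lemma~\ref{spatial_cs_lem} and applies it after writing $Z(\bh)$ in the required $A^+,B^-,C^+D^-$ form, whereas you perform the expansion directly on $Z(\bh)$ and phrase the Cauchy--Schwarz step as an inner-product bound $\el P,Q\er\leq\|P\|\,\|Q\|$ over the weighted measure on $(\vec n,\vec t)$; your formulation sidesteps the paper's need to verify the sign condition $E[F^+F^-]\geq 0$ (the paper's \eqref{pos1_eq}) before taking square roots, since $\|P\|^2=\int P^2\geq 0$ automatically. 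Your remark that the remaining work is to rigorously justify the interchange of sum, integral and expectation, and to verify that $\|P\|^2$ reassembles to exactly $Z(\bh\circ\rho^+)$, correctly identifies where the care is needed; both points go through as you indicate, using boundedness of $\bh$ and Lemma~\ref{muD_lem}, and the observation that $(h\circ\rho^+)$ is $\rho$-symmetric so the two crossing-edge factors coincide.
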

Before proving Lemma~\ref{spatial_symm_lem} we show how
it implies Lemma~\ref{spatial_gd_lem}.
\begin{proof}[Proof of Lemma~\ref{spatial_gd_lem}]
The set of all functions $\tau:\L\rightarrow\L$ is finite,
so $Z(\bh\circ\tau)$ attains its maximum over $\tau$.
Let $\tau$ be chosen so that
\begin{enumerate}
\item $Z(\bh\circ\tau)$ is maximal, and
\item $N(\bh\circ\tau)$ is minimal among the maximizers
of $Z(\bh\circ\tau)$.
\end{enumerate}
Suppose $N(\bh\circ\tau)>0$.  By automorphism
invariance~\eqref{auto_eq}
we may then assume that $N^\pm(\bh\circ\tau)>0$.
It then follows from the second part of 
Lemma~\ref{spatial_symm_lem} that
\begin{equation}\label{N_ineq}
N(\bh\circ\tau)>
\min\{N(\bh\circ(\tau\circ\rho^+)),
N(\bh\circ(\tau\circ\rho^-))\}.
\end{equation}
By our choice of $\tau$, both $Z(\bh\circ(\tau\circ\rho^+))$
and $Z(\bh\circ(\tau\circ\rho^-))$ are at most equal to 
$Z(\bh\circ\tau)$, and in light also of~\eqref{N_ineq}
one of them must be strictly smaller than
$Z(\bh\circ\tau)$.  This is a contradiction, however, since
the first part of Lemma~\ref{spatial_symm_lem}
would then give $Z(\bh\circ\tau)^2<Z(\bh\circ\tau)^2$.

It follows that $N(\bh\circ\tau)=0$, which is to say that
$h(\tau(x),\cdot)=h(\tau(y),\cdot)$ for all $x\sim y$, and hence (since
$\L$ is connected) for all $x,y\in\L$.  The result follows. 
\end{proof}

To prove Lemma~\ref{spatial_symm_lem} we need one more
preliminary result.  We let $\S^+$
denote the set of functions
$\L^+\times\Ob\rightarrow\{-1,+1\}$ 
which are right continuous, and have left limits, in the second argument.
We let $\cF^+$ denote the natural
sigma-field on $\S^+$, generated by finite-dimensional
projections, as in~\eqref{proj_eq}. 
We let $\cB$ denote the Borel sigma-field on $\Ob$.
For $\s\in\S$ and $A:\S^+\rightarrow\RR$ we use the shorthand
$A(\s)$ for $A$ applied to the restriction of $\s$
to $\L^+$.  For $\tau:\L\rightarrow\L$
we write $A\circ\tau$ for the function
$\s\mapsto A(\s\circ\tau)$, where $\s\circ\tau$
is as defined in the paragraph after 
Lemma~\ref{spatial_gd_lem}.
To simplify the notation we write $A^+$
for $A\circ\rho^+$ and $A^-$ for $A\circ\rho^-$.  Note that $A^+$ and
$A^-$ have domain $\S$ rather than $\S^+$.
Recall the measure $E$ from Section~\ref{gr_ssec}.
\begin{lemma}\label{spatial_cs_lem}
Let $J$ be a finite set.  Let $A,B:\S^+\rightarrow\RR$
be bounded and $\cF^+$-measurable, and for each $j\in J$ let
$C_j,D_j:\Ob\times\S^+\rightarrow\RR$ be bounded and
$\cB\times\cF^+$-measurable.  Then
\begin{multline}\label{cs1_eq}
E\Big[\exp\Big(A^+(\s)+B^-(\s)
+\sum_{j\in J}\int_0^\b C_{j,t}^+(\s)\, 
D_{j,t}^-(\s)\,dt\Big)\Big]^2\\
\leq 
E\Big[\exp\Big(A^+(\s)+A^-(\s)
+\sum_{j\in J}\int_0^\b C_{j,t}^+(\s)\, 
C_{j,t}^-(\s)\,dt\Big)\Big]\\
\cdot E\Big[\exp\Big(B^+(\s)+B^-(\s)
+\sum_{j\in J}\int_0^\b D_{j,t}^+(\s)\, 
D_{j,t}^-(\s)\,dt\Big)\Big]
\end{multline}
\end{lemma}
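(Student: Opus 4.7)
The plan is to combine three ingredients: the product structure of $E$, the $\rho$-invariance of $E$, and a Hubbard--Stratonovich (Gaussian) decoupling followed by the ordinary Cauchy--Schwarz inequality.

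First I would note that $E$ factorizes as a product over $x\in\L$ (the $D_x$ and $\xi_x$ are independent across sites), so $\s|_{\L^+}$ and $\s|_{\L^-}$ are independent under $E$; since $\rho$ is an automorphism of $\L$ and $E$ is invariant under automorphisms, the two marginals are moreover equal in law after identifying $\L^-$ with $\L^+$ via $\rho$.  As $A^+(\s)$ and each $C^+_{j,t}(\s)$ depend on $\s$ only through $\s|_{\L^+}$, while $B^-(\s)$ and each $D^-_{j,t}(\s)$ depend on $\s$ only through $\s|_{\L^-}$, every expectation appearing in~\eqref{cs1_eq} may be rewritten as an integral against a pair $(\eta,\eta')$ of i.i.d.\ copies of $\s|_{\L^+}$.

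Second, I would view $\mathbf{C}(\eta):=(C_{j,\cdot}(\eta))_{j\in J}$ and $\mathbf{D}(\eta)$ as elements of the Hilbert space $H:=\ell^2(J)\otimes L^2([0,\b))$, so that $\sum_j\int_0^\b C^+_{j,t}D^-_{j,t}\,dt=\el\mathbf{C}(\eta),\mathbf{D}(\eta')\er_H$.  Letting $(X(\phi))_{\phi\in H}$ be an isonormal Gaussian process on an auxiliary probability space, independent of $\eta,\eta'$, the elementary identity $\EE_X[e^{X(\mathbf{u})-\tfrac12\|\mathbf{u}\|^2}\,e^{X(\mathbf{v})-\tfrac12\|\mathbf{v}\|^2}]=e^{\el\mathbf{u},\mathbf{v}\er}$ decouples $\eta$ from $\eta'$ inside the exponent.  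After Fubini the left-hand side of~\eqref{cs1_eq} becomes $\EE_X[F(X)G(X)]^2$, where
\[
F(X)=\EE_\eta\Big[\exp\Big(A(\eta)+X(\mathbf{C}(\eta))-\tfrac12\|\mathbf{C}(\eta)\|^2\Big)\Big]
\]
and $G(X)$ is the analogous quantity built from $B$ and $\mathbf{D}$.  Cauchy--Schwarz in $X$ gives $\EE_X[FG]^2\leq\EE_X[F^2]\,\EE_X[G^2]$, and a short computation using two independent copies of $\eta$ together with $\EE_X[e^{X(\mathbf{u})}]=e^{\|\mathbf{u}\|^2/2}$ produces exactly the two factors on the right-hand side of~\eqref{cs1_eq}.

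Third, the only technical obstacle I anticipate is making the infinite-dimensional Gaussian construction rigorous.  Since $J$ is finite and $A,B,C_j,D_j$ are all bounded, I would sidestep this by first discretizing each $t$-integral as a Riemann sum over a partition of $[0,\b)$, running the argument above with a finite number of independent standard Gaussians via the scalar identity $e^{uv}=(2\pi)^{-1/2}\int_\RR e^{ux-u^2/2}e^{vx-v^2/2}\,dx$ applied coordinatewise, and then passing to the continuum limit by bounded convergence using the uniform bounds on the integrands.  Apart from this discretization step the argument is a standard reflection-positivity computation.
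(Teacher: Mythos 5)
Your argument is correct, and it takes a genuinely different route from the paper's. The paper proceeds by a term-by-term power-series expansion: the exponential of the bilinear term $\sum_j\int_0^\b C^+_{j,t}D^-_{j,t}\,dt$ is written as $\sum_{k\geq 0}\frac{1}{k!}(\dotsb)^k$, each order-$k$ term is further expanded into a sum over $j_1,\dotsc,j_k\in J$ and an iterated integral over $t_1,\dotsc,t_k$, producing expectations of the form $E[F^+(\s)G^-(\s)]$ with $F=e^A\prod_m C_{j_m,t_m}$ and $G=e^B\prod_m D_{j_m,t_m}$; the same i.i.d.\ observation you make then yields $E[F^+G^-]^2=E[F^+F^-]\,E[G^+G^-]$ with both factors nonnegative (they equal $E[F]^2$ and $E[G]^2$), and the ordinary Cauchy--Schwarz inequality is applied to the resulting nonnegative series. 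Your Hubbard--Stratonovich decoupling replaces this combinatorial expansion by a single auxiliary Gaussian field: writing $e^{\langle\mathbf{C}(\eta),\mathbf{D}(\eta')\rangle}$ as $\EE_X\big[e^{X(\mathbf{C}(\eta))-\frac12\|\mathbf{C}(\eta)\|^2}\,e^{X(\mathbf{D}(\eta'))-\frac12\|\mathbf{D}(\eta')\|^2}\big]$ separates the $\eta$- and $\eta'$-dependence in the exponent, and one application of Cauchy--Schwarz in $X$ finishes the argument. Both proofs rest on the same structural input, that $\s|_{\L^+}$ and $\s|_{\L^-}$ are i.i.d.\ under $E$ after identifying $\L^-$ with $\L^+$ via $\rho$, so that $E[F^+G^-]$ factors as $E[F]E[G]$. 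What the paper's route buys is that it is fully elementary and finite-dimensional at every stage (no auxiliary measure needed, and the positivity needed to take square roots is explicit); what your route buys is a cleaner one-shot reduction to ordinary Cauchy--Schwarz, at the cost of either constructing an isonormal Gaussian on $\ell^2(J)\otimes L^2[0,\b)$ or, as you rightly note, first discretizing the $t$-integral and then passing to the limit by bounded convergence, which is harmless under the stated boundedness hypotheses and the finiteness of $J$.
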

\begin{proof}
We first make the following observation.  Let 
$F,G:\S^+\rightarrow\RR$ be
bounded and $\cF^+$-measurable.  Then
\begin{equation}\label{cs_eq}
\begin{split}
E[F^+(\s)G^-(\s)]^2&=E[F(\rho^+(\s))G(\rho^-(\s))]^2\\
&=E[F(\rho^+(\s))F(\rho^-(\s))]
E[G(\rho^+(\s))G(\rho^-(\s))]\\
&=E[F^+(\s)F^-(\s)]E[G^+(\s)G^-(\s)].
\end{split}
\end{equation}
This is because the sets $\L^+=\rho^+(\L^+)$
and $\L^-=\rho^-(\L^+)$ are disjoint, and the
random variables $\rho^+(\s)$ and $\rho^-(\s)$
therefore independent and identically distributed
under $E$.  For the same reason,
\begin{equation}\label{pos1_eq}
\begin{split}
E[F^+(\s)F^-(\s)]&=E[F(\s)]^2\geq0,\mbox{ and}\\
E[G^+(\s)G^-(\s)]&=E[G(\s)]^2\geq0.
\end{split}
\end{equation}

Turning now to~\eqref{cs1_eq}, the integrand in the
left-hand-side may be written as the sum
\begin{equation}\label{s1}
\sum_{k\geq0}\frac{1}{k!}
\exp\big(A^+(\s)+B^-(\s)\big)
\Big(\sum_{j\in J}\int_0^\b C_{j,t}^+(\s)\, 
D_{j,t}^-(\s)\,dt\Big)^k.
\end{equation}
Taking the expectation inside the sum
and expanding the last factor,
each summand in~\eqref{s1} may in turn be written
as a sum over $j_1,\dotsc,j_k\in J$
of a repeated integral over $t_1,\dotsc,t_k\in\Ob$
of a term of the form
\begin{equation}\label{cs3_eq}
\frac{1}{k!}E\big[e^{A^+(\s)}C^+_{j_1,t_1}(\s)\dotsb C^+_{j_k,t_k}(\s)\cdot
e^{B^-(\s)}D^-_{j_1,t_1}(\s)\dotsb D^-_{j_k,t_k}(\s)\big].
\end{equation}
The latter expectation is of the form $E[F^+(\s)G^-(\s)]$, with
\[
\begin{split}
F(\s)&=e^{A(\s)}C_{j_1,t_1}(\s)\dotsb C_{j_k,t_k}(\s),\mbox{ and}\\
G(\s)&=e^{B(\s)}D_{j_1,t_1}(\s)\dotsb D_{j_k,t_k}(\s).
\end{split}
\]
Using~\eqref{cs_eq}, and slightly abusing notation, we therefore
see that the left-hand-side of~\eqref{cs1_eq} equals
\begin{equation*}
\begin{split}
\Big(\sum_{k\geq 0}&\sum_{j_1,\dotsc,j_k\in J}
\int_0^\b dt_1\cdots\int_0^\b dt_k
\frac{1}{k!}\sqrt{E\big[F^+(\s) F^-(\s)]}
\sqrt{E\big[G^+(\s)G^-(\s)\big]}\Big)^2\\
&\leq \sum_{k\geq 0}\frac{1}{k!}\sum_{j_1,\dotsc,j_k\in J}
\int_0^\b dt_1\cdots\int_0^\b dt_k E\big[F^+(\s) F^-(\s)\big]\\
&\qquad\cdot 
\sum_{k\geq 0}\frac{1}{k!}\sum_{j_1,\dotsc,j_k\in J}
\int_0^\b dt_1\cdots\int_0^\b dt_k E\big[G^+(\s) G^-(\s)\big].
\end{split}
\end{equation*}
Here we used the Cauchy--Schwarz inequality as well 
as~\eqref{pos1_eq}.  Reversing the steps 
leading up to~\eqref{cs3_eq} for each of the two
factors gives the result.
\end{proof}

\begin{proof}[Proof of Lemma~\ref{spatial_symm_lem}]
For the first part, the aim is to write $Z(\bh)$
(see~\eqref{Zh_eq}) in terms of suitably chosen 
$A$, $B$, $C_{j,t}$ and $D_{j,t}$,
and then apply Lemma~\ref{spatial_cs_lem}.
To begin with, for simplicity of notation,
fix $t\in\Ob$ and write $\s(x)$ and $h(x)$
for $\s(x,t)$ and $h(x,t)$, respectively.
We have by~\eqref{L-sum_eq} that
\[
\el L(\s+h),\s+h\er=
\sum_{x\sim y} (\s(x)+h(x)-\s(y)-h(y))^2,
\]
which splits into the three sums
\begin{equation}\label{s1_eq}
\sum_{\substack{x\sim y\\x,y\in\L^+}} (\s(x)+h(x)-\s(y)-h(y))^2,
\end{equation}
\begin{equation}\label{s2_eq}
\sum_{\substack{x\sim y\\x,y\in\L^-}} (\s(x)+h(x)-\s(y)-h(y))^2,
\end{equation}
and
\begin{equation}\label{s3_eq}
\sum_{\substack{x\sim y\\x\in\L^+,y\in\L^-}} (\s(x)+h(x)-\s(y)-h(y))^2.
\end{equation}
Write $x\sim\L^-$ (respectively, $x\sim\L^+$)
to denote that $x$ is adjacent to some element
of $\L^-$ (respectively, $\L^+$).
Then~\eqref{s3_eq} equals
\begin{equation}\label{s4_eq}
\begin{split}
\sum_{\substack{x\in\L^+\\x\sim\L^-}} (\s(x)&+h(x))^2+
\sum_{\substack{y\in\L^-\\y\sim\L^+}} (\s(y)+h(y))^2\\
&-\sum_{\substack{x\sim y\\x\in\L^+,y\in\L^-}}
2(\s(x)+h(x))(\s(y)+h(y)).
\end{split}
\end{equation}
Since $\rho^-$ is a bijection from $\L^+$ to 
$\L^-$,~\eqref{s2_eq} equals
\begin{equation*}%\label{s2_eq}
\sum_{\substack{x\sim y\\x,y\in\L^+}} 
\big((\s\circ\rho^-)(x)+(h\circ\rho^-)(x)
-(\s\circ\rho^-)(y)-(h\circ\rho^-)(y)\big)^2,
\end{equation*}
and the second sum in~\eqref{s4_eq} equals
\begin{equation*}%\label{s2_eq}
\sum_{\substack{y\in\L^+\\y\sim\L^-}} 
\big((\s\circ\rho^-)(y)+(h\circ\rho^-)(y)\big)^2
\end{equation*}
Moreover, if $x\in\L^+$, $y\in\L^-$ with $x\sim y$, 
then $y=\rho^-(x)$.  So the last term in~\eqref{s4_eq}
equals
\begin{equation*}
\sum_{\substack{x\in\L^+\\x\sim\L^-}}
2(\s(x)+h(x))((\s\circ\rho^-)(x)+(h\circ\rho^-)(x)).
\end{equation*}

Recall that $\rho^+(x)=x$ for $x\in\L^+$.
Reintroducing $t$ into our notation, it follows that
\begin{multline}\label{L_split_eq}
-\l\int_0^\b\el 
L[\s(\cdot,t)+h(\cdot,t)],[\s(\cdot,t)+h(\cdot,t)]\er\,dt=\\
=A_1^+(\s)+B_1^-(\s)+
\sum_{\substack{x\in\L^+\\x\sim\L^-}}\int_0^\b
C_{x,t}^+(\s) D_{x,t}^-(\s)\,dt,
\end{multline}
where $A_1,B_1,C_{x,t},D_{x,t}:\S\rightarrow\RR$ are given by
\begin{multline*}
A_1(\s)=-\l\int_0^\b\Big(
\sum_{\substack{x\in\L^+\\x\sim\L^-}} (\s(x,t)+h(x,t))^2+\\
+\sum_{\substack{x\sim y\\x,y\in\L^+}} 
\big(\s(x,t)+h(x,t)-\s(y,t)-h(y,t)\big)^2
\Big)dt,
\end{multline*}
\begin{multline*}
B_1(\s)=-\l\int_0^\b\Big(
\sum_{\substack{x\in\L^+\\x\sim\L^-}} 
\big(\s(x,t)+(h\circ\rho^-)(x,t)\big)^2+\\
+\sum_{\substack{x\sim y\\x,y\in\L^+}} 
\big(\s(x,t)+(h\circ\rho^-)(x,t)-\s(y,t)
-(h\circ\rho^-)(y,t)\big)^2
\Big)dt,
\end{multline*}
and
\begin{equation}\label{CD_eq}
\begin{split}
C_{x,t}(\s)&=\sqrt{2\l}\big(\s(x,t)+h(x,t)\big),\\
D_{x,t}(\s)&=\sqrt{2\l}\big(\s(x,t)+(h\circ\rho^-)(x,t)\big).
\end{split}
\end{equation}

Next, 
\begin{equation}\label{int_split_eq}
\frac{1}{2\d}\sum_{x\in\L}\int_0^\b h''(x,t)\s(x,t)\,dt
=A_2^+(\s)+B_2^-(\s),
\end{equation}
where 
\begin{equation*}
A_2(\s)=\frac{1}{2\d}\sum_{x\in\L^+}\int_0^\b
h''(x,t)\s(x,t)\,dt
\end{equation*}
and
\begin{equation*}
B_2(\s)=\frac{1}{2\d}\sum_{x\in\L^+}\int_0^\b
(h\circ\rho^-)''(x,t)\s(x,t)\,dt.
\end{equation*}
Thus, from~\eqref{Zh_eq},
$Z(\bh)$ is of the form of the left-hand-side
of~\eqref{cs1_eq}, with $A(\s)=A_1(\s)+A_2(\s)$,
$B(\s)=B_1(\s)+B_2(\s)$, $J=\{x\in\L^+:x\sim\L^-\}$,
and $C_{x,t}$ and $D_{x,t}$ as in~\eqref{CD_eq}.

Reversing the steps leading to~\eqref{L_split_eq}
and~\eqref{int_split_eq} 
(and recalling that $\rho^+$ is a bijection from
$\L^-$ to $\L^+$) shows that 
\[
Z(\bh\circ\rho^+)=E\Big[\exp\Big(A^+(\s)+A^-(\s)
+\sum_{j\in J}\int_0^\b C_{j,t}^+(\s)\, 
C_{j,t}^-(\s)\,dt\Big)\Big]
\]
and that
\[
Z(\bh\circ\rho^-)=E\Big[\exp\Big(B^+(\s)+B^-(\s)
+\sum_{j\in J}\int_0^\b D_{j,t}^+(\s)\, 
D_{j,t}^-(\s)\,dt\Big)\Big].
\]
The first part of the lemma now follows
from Lemma~\ref{spatial_cs_lem}.

For the second part define the numbers
\begin{itemize}
\item $N^+(\bh)$ as the number of unordered pairs of 
adjacent elements $x\sim y$ of $\L^+$ such that 
$h(x,\cdot)\neq h(y,\cdot)$;  and
\item $N^-(\bh)$ as the number of unordered pairs of 
adjacent elements $x\sim y$ of $\L^-$ such that 
$h(x,\cdot)\neq h(y,\cdot)$.
\end{itemize}
Then $N(\bh)=N^+(\bh)+N^-(\bh)+N^\pm(\bh)$,
whereas $N(\bh\circ\rho^+)=2N^+(\bh)$ and
$N(\bh\circ\rho^-)=2N^-(\bh)$.  The result follows
immediately from this observation.
\end{proof}

\subsection{Proof of Lemma~\ref{temporal_gd_lem}}
\label{temp_pf_sec}

Write $D_x=\{t_1^x,t_2^x,\dotsc,t_{|D_x|}^x\}$
for the points of $D_x$ ordered so that
$0<t_1^x<t_2^x<\cdots<t_{|D_x|}^x<\b$.  For
convenience we also write $t_{|D_x|+1}^x=t^x_1$.
Note that if $h$ is twice differentiable, then
\begin{equation}\label{hsigma}
\begin{split}
\int_0^\b h''(t)\s(x,t)dt&=
\sum_{j=1}^{|D_x|}\int_{t^x_j}^{t^x_{j+1}}h''(t)\s(x,t^x_j)dt\\
&=\sum_{j=1}^{|D_x|} \s(x,t^x_j)(h'(t^x_{j+1})-h'(t^x_j))\\
&=-2\sum_{j=1}^{|D_x|}h'(t^x_j)\s(x,t^x_j).
\end{split}
\end{equation}
Here we used the fact that $\s(x,t_j^x)=-\s(x,t_{j+1}^x)$.
Moreover, since $\s(x,t)=(-1)^{\xi_x+|D_x\cap[0,t]|}$
we have that $\s(x,t^x_j)=(-1)^{\xi_x+j}$.  Hence
\begin{equation}\label{pf}
Z(h)=E\Big[\exp\Big(
-\l\int_0^\b\el L\s(\cdot,t),\s(\cdot,t)\er\,dt-
\frac{1}{\d}
\sum_{x\in\L}\sum_{j=1}^{|D_x|}h'(t_j^x)(-1)^{\xi_x+j}\Big)\Big].
\end{equation}
Note that the form~\eqref{pf} does not require $h$
to be twice differentiable.  We say that $h$ is 
\emph{weakly differentiable}, and that
$h'$ is a \emph{weak derivative} of $h$,
if there is a constant $c$ such that
\[
h(t)=\int_0^t h'(s)\,ds+c\quad\mbox{for all }0\leq t<\b.
\]
Since weak derivatives are defined up to a set of zero
measure,~\eqref{pf} is well-defined if we take $h'$ to be
any weak derivative of $h$.  In this section we will let
$Z(h)$ denote the quantity in~\eqref{pf}, and the standing
assumption on $h$ will be that it is weakly differentiable
with a bounded weak derivative.  Also note that 
$Z(h)=Z(h+c)$ for any constant $c$, so we may occasionally
assume that $h(0)=0$.

By the monotonicity of $\z$,
Lemma~\ref{temporal_gd_lem} will be proved if we show:
\begin{equation}\label{gd_lem}
\mbox{there is } 0\leq r\leq\|h'\|_\oo \mbox{ such that } 
Z(h)/Z(0)\leq \z(r).
\end{equation}
The proof of~\eqref{gd_lem}
is preceded by a number of preliminary results, of which
the following general fact is the first.
For each $n\geq1$ let 
\[
\cO^{(n)}=\bigcup_{k=0}^{2^{n-1}-1}[(2k+1)2^{-n},(2k+2) 2^{-n})
\]
be the union of those level $n$ dyadic subintervals 
of $[0,1)$ whose left endpoints are odd multiples
of $2^{-n}$.  Write $B_n(t)=\one\{t\in\cO^{(n)}\}$
for $t\in[0,1]$.
\begin{lemma}\label{binary_lem}
Let $m\geq1$ and let $T=(T_1,\dotsc,T_m)\in[0,1]^m$
be a random vector with square integrable density 
$p:[0,1]^m\rightarrow[0,\oo)$. 
Then for each $b\in\{0,1\}^m$,
\[
P\big((B_n(T_1),\dotsc,B_n(T_m))=b\big)
\rightarrow\frac{1}{2^m},
\quad\mbox{as }n\rightarrow\oo.
\]
\end{lemma}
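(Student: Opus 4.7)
The plan is to identify $R_n(t):=1-2B_n(t)\in\{-1,+1\}$ as (up to sign) the $n$-th Rademacher function on $[0,1]$, and then expand the event of interest in terms of products of such functions. With $\eps_i:=1-2b_i\in\{-1,+1\}$, the event $\{B_n(T_i)=b_i\}$ coincides with $\{R_n(T_i)=\eps_i\}$, so the indicator can be written as
\[
\prod_{i=1}^m\frac{1+\eps_iR_n(T_i)}{2}=\frac{1}{2^m}\sum_{S\se\{1,\dotsc,m\}}\Big(\prod_{i\in S}\eps_i\Big)\prod_{i\in S}R_n(T_i).
\]
Taking expectations, the term $S=\es$ contributes $1/2^m$ exactly, and the lemma reduces to the vanishing statement: for every non-empty $S\se\{1,\dotsc,m\}$,
\[
I_n^S:=\int_{[0,1]^m}p(t)\prod_{i\in S}R_n(t_i)\,dt\longrightarrow 0\quad\text{as }n\to\oo.
\]

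To prove this vanishing I would use that the algebraic tensor product is dense in $L^2([0,1]^m)$. Given $\eta>0$, choose $p_\eta=\sum_{k=1}^K p_1^{(k)}\otimes\dotsb\otimes p_m^{(k)}$ with each $p_j^{(k)}\in L^2([0,1])$ and $\|p-p_\eta\|_2<\eta$. Since $|\prod_{i\in S}R_n(t_i)|\le1$ pointwise, Cauchy--Schwarz gives $|I_n^S-I_n^{S,\eta}|<\eta$ uniformly in $n$, where $I_n^{S,\eta}$ uses $p_\eta$ in place of $p$. For $p_\eta$ the integral factorizes by Fubini:
\[
I_n^{S,\eta}=\sum_{k=1}^K\Big(\prod_{i\in S}\int_0^1p_i^{(k)}(t)R_n(t)\,dt\Big)\Big(\prod_{i\notin S}\int_0^1p_i^{(k)}(t)\,dt\Big).
\]
The Rademacher functions form an orthonormal system in $L^2([0,1])$, so by Bessel's inequality $\int_0^1fR_n\,dt\to0$ for any fixed $f\in L^2([0,1])$; applied to each $p_i^{(k)}$ with $i\in S$, and using $|S|\geq1$ together with the finiteness of the sum over $k$, this gives $I_n^{S,\eta}\to0$. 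Hence $\limsup_n|I_n^S|\leq\eta$, and letting $\eta\downarrow0$ finishes the proof.

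The main obstacle is the coupling between coordinates in the joint density $p$: were the $T_i$ independent, each product integral would factor directly and the Rademacher integrals would vanish by Bessel. The tensor-approximation argument is the clean way around this, and it uses the square-integrability hypothesis in an essential way, since the $L^2$ error of the approximation pairs with the uniform bound on $\prod_{i\in S}R_n(t_i)$. Under only an $L^1$ hypothesis one would instead approximate by simple functions built from product rectangles and prove $\int_A R_n\,dt\to 0$ for arbitrary measurable $A$ by a separate argument; this is workable but less streamlined, which is presumably why square-integrability is assumed.
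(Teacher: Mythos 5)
Your proof is correct, and its first half (writing the indicator as $2^{-m}\prod_i(1+\eps_iR_n(T_i))$ and expanding so that only the empty-product term survives in the limit) is exactly the paper's opening move. Where you diverge is in showing that the non-trivial Fourier coefficients $\int_{[0,1]^m}p(t)\prod_{i\in S}R_n(t_i)\,dt$ vanish for $\es\neq S$: you reduce to the one-dimensional Rademacher system via tensor-product approximation of $p$ in $L^2$, a Cauchy--Schwarz estimate on the error, and Fubini. The paper instead observes at the outset that the whole family $\{R_n^A : A\neq\es,\ n\geq1\}$ of \emph{multi-index} products is already an orthonormal system in $L^2([0,1]^m)$ (any two distinct pairs $(A,n)\neq(A',n')$ have a coordinate at which the factors integrate to zero), so Bessel's inequality applied directly to $p$ in that space gives $\sum_n\hat p(A,n)^2<\oo$ and hence $\hat p(A,n)\to0$, with no approximation step at all. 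Your route is a perfectly valid workaround for the coupling between coordinates, but it is an extra layer that the $m$-dimensional orthonormality makes unnecessary; the paper's version is shorter and uses the $L^2$ hypothesis in the same essential way, just one dimension up.
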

Intuitively, Lemma~\ref{binary_lem}
states that the level $n$ binary digits of the $T_j$
are asymptotically independent and uniform
as $n\rightarrow\oo$.
\begin{proof}
For each $n\geq1$ and $A\se\{1,\dotsc,m\}$, let 
\[
R_n^A(t)=\prod_{j\in A} (-1)^{B_n(t_j)},\quad t\in[0,1]^m,
\]
denote the Rademacher function.  Then 
$(R_n^A:A\neq\es,n\geq1)$ are orthonormal in the
Hilbert space $L^2([0,1]^m)$.  Writing
\[
\hat p(A,n)=E(R^A_n(T))=
\int_0^1\dotsb\int_0^1 p(t_1,\dotsc,t_m)R^A_n(t_1,\dotsc,t_m)
dt_1\dotsc dt_m,
\]
it follows (from Bessel's inequality or otherwise) that
for each $\es\neq A\se\{1,\dotsc,m\}$ we have 
$\sum_{n\geq1}\hat p(A,n)^2<\oo$.  In particular,
$\hat p(A,n)\rightarrow0$ as $n\rightarrow\oo$.
For any $A\se\{1,\dotsc,m\}$,
\begin{multline*}
\one\{B_n(T_j)=1\;\forall j\in A,\;
B_n(T_j)=0\;\forall j\not\in A\}=\\
=2^{-m}\prod_{j\in A} \big(1-(-1)^{B_n(T_j)}\big)
\prod_{j\not\in A} \big(1+(-1)^{B_n(T_j)}\big).
\end{multline*}
Expanding the products on the right, we obtain a sum of
terms of the form $\pm R^C_n(T)$ for $C\se\{1,\dotsc,m\}$.  
The term $R^\es_n(T)=1$ appears exactly once.  Taking expected
value and letting $n\rightarrow\oo$ gives the result.
\end{proof}

The following technical lemma will enable us to
apply Lemma~\ref{binary_lem} to the process $D$.
\begin{lemma}\label{density_lem}
For each $x\in\L$, let $m_x\geq0$ be an integer, and
for $j\in\{1,\dotsc,2m_x\}$ let 
$I^x_j=[a^x_j,b^x_j]\se[0,\b)$ be intervals
such that $0<a^x_1<b^x_1\leq a^x_2<b^x_2\leq\dotsb
\leq a^x_{2m_x}<b^x_{2m_x}<\b$.
Let $A$ denote the event that: for each $x\in\L$
we have $|D_x|=2m_x$, and for all 
$j\in\{1,\dotsc,2m_x\}$ we have $t^x_j\in I^x_j$.
Then the law of  $(t^x_j:x\in\L,j\in\{1,\dotsc,2m_x\})$
under $\mu(\cdot\mid A)$
has a square-integrable density with respect to Lebesgue
measure on
$I:=\prod_{x\in\L}\prod_{j=1}^{2m_x} I^x_j$.
\end{lemma}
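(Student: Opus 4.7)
The plan is to write the conditional density explicitly and observe that it is bounded on $I$, from which square-integrability follows since $I$ has finite Lebesgue measure.

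First I would describe the law of $(t^x_j)$ under the base measure $E(\cdot\mid A)$. Under $E$, the point processes $(D_x)_{x\in\L}$ are independent (each a Poisson process of intensity $\d$ on $[0,\b)$, conditioned on even size), so the joint law of the ordered points $(t^x_1,\dotsc,t^x_{2m_x})_{x\in\L}$ under $E$ conditional on $\{|D_x|=2m_x\text{ for all }x\}$ is that of independent uniform order statistics, with joint density $\prod_{x\in\L}(2m_x)!/\b^{2m_x}$ on the product of simplices. Further conditioning on $A$ simply restricts the support to $I$ and renormalizes, producing a density which is constant, and in particular bounded, on $I$.

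Next I would compare $\mu$ to $E$. By construction $\mu$ has Radon--Nikodym derivative
\[
\frac{d\mu}{dE}=\frac{1}{Z^\b_\L}\exp\Big(\l\sum_{x\sim y}\int_0^\b\s(x,t)\s(y,t)\,dt\Big)
\]
with respect to $E$. Because $|\s(x,t)\s(y,t)|\leq 1$ pointwise and the torus $\L$ has $d|\L|$ edges, this density lies between the strictly positive constants $\exp(-\l d|\L|\b)/Z^\b_\L$ and $\exp(\l d|\L|\b)/Z^\b_\L$. Integrating out the discrete variables $\xi$, which are independent of $D$ under $E$, yields the same two-sided bound for the density of the $D$-marginal of $\mu$ with respect to the $D$-marginal of $E$.

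Finally, since $A\in\sigma(D)$ and $E(A)>0$ (the intervals $I^x_j$ have positive length, so this reduces to the uniform-order-statistics computation), mutual absolute continuity gives $\mu(A)>0$, and the density of $(t^x_j)$ under $\mu(\cdot\mid A)$ with respect to Lebesgue measure on $I$ factors as the (bounded) density under $E(\cdot\mid A)$ times the (bounded) Radon--Nikodym derivative of the $D$-marginals, divided by $\mu(A)/E(A)$. Each factor is a finite constant or a bounded function, so the conditional density is bounded on $I$. Since $I\subseteq[0,\b)^{\sum_x 2m_x}$ has finite Lebesgue measure, boundedness upgrades to $L^2$-integrability, giving the claim. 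The argument is essentially bookkeeping; the only substantive input is the trivial bound $|\s|\leq 1$ on the Boltzmann factor, so there is no real obstacle to overcome.
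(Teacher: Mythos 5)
Your proof is correct and follows essentially the same route as the paper's: factor $E$ into the $\xi$- and $D$-parts, observe that $(t^x_j)$ is uniform on $I$ under the base measure conditioned on $A$, and multiply by the Radon--Nikodym derivative of $\mu$ with respect to $E$. The only (harmless) difference is that you spell out why the density is actually bounded via $|\s|\le 1$ and the finite edge count, whereas the paper simply asserts that square-integrability of $\tilde q$ is clear from the formula for $q(D)$.
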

\begin{proof}
Recall that $E=E_0\times E_\times$, where $E_0$
governs $\xi$ and $E_\times$ governs $D$.
The law of $D$ under $\mu$ has density 
\begin{equation}\label{q_eq}
q(D)=E_0\Big[\frac{1}{Z_\L}\exp\Big(
\l\sum_{x\sim y}\int_0^\b\s(x,t)\s(y,t)\,dt\Big)\Big]
\end{equation}
with respect to $E_\times$.
From standard properties of conditional expectation
it follows that the law of $D$ under $\mu(\,\cdot\mid A)$
has density \[
\tilde q(D)=\frac{q(D)}{E_\times[q(D)\mid A]}
\] 
with respect to $E_\times(\,\cdot\mid A)$.
The law of $(t^x_j:x\in\L,j\in\{1,\dotsc,2m_x\})$
under $E_\times(\,\cdot\mid A)$ is the uniform 
distribution on $I$, by standard properties of
Poisson processes.
It is clear from~\eqref{q_eq} that $\tilde q$ is square
integrable.
\end{proof}

Let $r\in\RR$
be a real number and $n\geq1$ an integer, and let
\[
W'_{r,n}(t)=r(-1)^{\lfloor 2^nt/\b\rfloor}.
\]
Thus $W'_{r,n}(t)$ takes the two values $\pm r$, and changes sign
at the level $n$ dyadics, ie points of the form $t=k2^{-n}\b$
for $k\in\{0,1,\dotsc,2^n-1\}$.
Let $W_{r,n}$ be the antiderivative of $W'_{r,n}$, given by
\[
W_{r,n}(t)=\int_0^t W'_{r,n}(s)\,ds,\qquad\mbox{for all }0\leq t<\b.
\]
See Figure~\ref{white_fig}.
\begin{figure}[hbt]
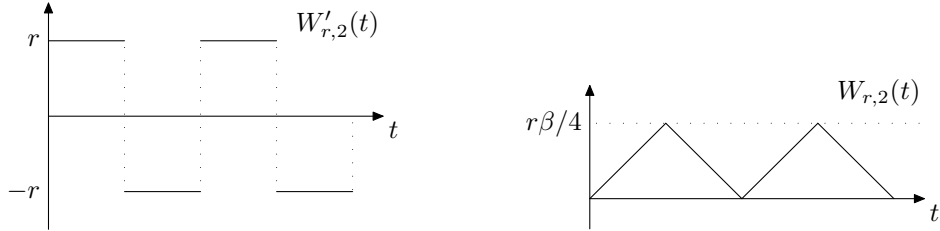

\centering
\includegraphics{irb.2}\qquad\qquad
\includegraphics{irb.3}
\caption{The functions $W'_{r,n}(t)$ (left)
and $W_{r,n}(t)$ (right) for $n=2$.}
\label{white_fig}   
\end{figure}
Recall that $\z(r)=\mu[\cosh(r/\d)^{|D|}]$.

\begin{lemma}\label{white_lem}
We have that
\[
\lim_{n\rightarrow\oo}Z(W_{r,n})=\z(r) Z(0).
\]
\end{lemma}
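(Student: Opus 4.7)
The plan is to expand $e^{Y_n}$, where $Y_n$ is the extra exponent in $Z(W_{r,n})$ coming from~\eqref{pf}, as a product over the Poisson points of $D$, and then use Lemma~\ref{binary_lem} to show that every non-trivial term in this expansion vanishes in the limit.

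First I would observe that $W_{r,n}'(t)=r(-1)^{B_n(t/\b)}$, so from~\eqref{pf},
\[
Z(W_{r,n})/Z(0)=\mu[e^{Y_n}],\qquad Y_n=-\frac{r}{\d}\sum_{x\in\L}\sum_{j=1}^{|D_x|}\s(x,t_j^x)(-1)^{B_n(t_j^x/\b)},
\]
where I have used $\s(x,t_j^x)=(-1)^{\xi_x+j}$ to reshape the sum. Since each summand of $Y_n$ lies in $\{\pm r/\d\}$, the identity $e^{-y\eps}=\cosh y-\eps\sinh y$ (for $\eps\in\{\pm1\}$) yields
\[
e^{Y_n}=\prod_{x\in\L}\prod_{j=1}^{|D_x|}\Big(\cosh(r/\d)-\sinh(r/\d)\,\s(x,t_j^x)(-1)^{B_n(t_j^x/\b)}\Big).
\]
Expanding this into a sum over subsets $S$ of the points of $D$, the term $S=\es$ is just $\cosh(r/\d)^{|D|}$, whose $\mu$-expectation is $\z(r)$. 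So it suffices to show that each $S\ne\es$ contribution has $\mu$-expectation tending to zero.

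Next I would condition on $(|D_x|)_{x\in\L}$ and on $\xi$. Given these, the sign $\s(x,t_j^x)=(-1)^{\xi_x+j}$ is deterministic, and the joint law of the times $(t_j^x/\b)_{x,j}$ has, on the ordered simplex in $[0,1]^{|D|}$, a density with respect to Lebesgue measure proportional to the Boltzmann weight $\exp(-\l\int_0^\b\el L\s,\s\er\,dt)$. Since $|\s|\equiv1$ this weight is uniformly bounded, so the density is bounded, and its marginal on any subset $S$ of coordinates therefore has a bounded, hence square integrable, density on $[0,1]^{|S|}$. Lemma~\ref{binary_lem} then gives $P((B_n(t_j^x/\b))_{(x,j)\in S}=b)\to 2^{-|S|}$ for each $b\in\{0,1\}^{|S|}$, and since $\sum_{b\in\{0,1\}^{|S|}}(-1)^{b_1+\cdots+b_{|S|}}=0$ for $|S|\ge1$,
\[
E\Big[\prod_{(x,j)\in S}(-1)^{B_n(t_j^x/\b)}\,\Big|\,\xi,\,(|D_x|)\Big]\longrightarrow 0.
\]
Reinstating the bounded factors $\cosh(r/\d)^{|D|-|S|}(-\sinh(r/\d))^{|S|}\prod_{(x,j)\in S}\s(x,t_j^x)$ and summing over $S$ shows that $\mu[e^{Y_n}\mid\xi,(|D_x|)]\to\cosh(r/\d)^{|D|}$.

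Finally, I would remove the conditioning by dominated convergence, using the pointwise bound $|e^{Y_n}|\le e^{|r||D|/\d}$, which has finite $\mu$-expectation thanks to Lemma~\ref{muD_lem}: $|D|$ is stochastically dominated by a Poisson random variable of mean $2\b\d|\L|$. This yields $\mu[e^{Y_n}]\to\mu[\cosh(r/\d)^{|D|}]=\z(r)$, and hence $Z(W_{r,n})\to\z(r)Z(0)$. The main obstacle is the measure-theoretic check that Lemma~\ref{binary_lem} actually applies—namely that the marginal density of the selected Poisson times is square integrable—which reduces to the uniform boundedness of the Boltzmann weight defining $\mu$; once that is in hand, the combinatorial cancellation $\sum_b (-1)^{|b|}=0$ does the rest.
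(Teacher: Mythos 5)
Your proof is correct, and it takes a genuinely different route from the paper's. Both start from the same identity $Z(W_{r,n})/Z(0)=\mu[e^{Y_n}]$ with $Y_n=-\tfrac{r}{\d}\sum_{x,j}\s(x,t_j^x)(-1)^{B_n(t_j^x/\b)}$, and both ultimately rely on Lemma~\ref{binary_lem}. The difference is in how the exchange of $n\to\infty$ with the expectation is handled. The paper keeps the exponential intact, proves uniform integrability of $(e^{rH_n})_n$ via Lemma~\ref{muD_lem}, truncates to events $A_M\cap B_L$ of high probability, and conditions additionally on the dyadic ``rough'' positions $k_j^x$ so that Lemma~\ref{density_lem} gives a square-integrable density on a product of disjoint intervals; the final step is an $\eps$-argument over a finite set of possibilities. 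Your proof instead linearizes $e^{Y_n}$ via $e^{-y\eps}=\cosh y-\eps\sinh y$, which turns $e^{Y_n}$ into a finite product over the Poisson points of $D$ and hence (after conditioning on $\xi$ and $(|D_x|)_x$) into a finite sum over subsets $S$; the $S=\es$ term is exactly $\cosh(r/\d)^{|D|}$ and each $S\ne\es$ term dies by Lemma~\ref{binary_lem} together with $\sum_{b\in\{0,1\}^{|S|}}(-1)^{|b|}=0$. This makes the convergence of $\mu[e^{Y_n}\mid\xi,(|D_x|)]$ trivial pointwise and reduces the interchange to a one-line dominated-convergence argument with the $\cG$-measurable majorant $e^{|r||D|/\d}$ (again from Lemma~\ref{muD_lem}). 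You also bypass Lemma~\ref{density_lem}: after conditioning on $\xi$ and the counts, the conditional law of $(t_j^x/\b)_{x,j}$ on $[0,1]^{|D|}$ is the uniform law on a product of ordered simplices reweighted by a Boltzmann factor bounded in $[e^{-\l d|\L|\b},e^{\l d|\L|\b}]$, hence has a bounded density whose marginals are bounded and therefore square integrable; this is all that Lemma~\ref{binary_lem} needs, and it removes the need to condition on the coarse dyadic cells $k_j^x$. Your approach is shorter and more transparent; the paper's gains nothing essential here beyond generality of exposition. Two cosmetic points worth tidying in a write-up: the dominating function should be stated as a bound on $\mu[e^{Y_n}\mid\xi,(|D_x|)]$ rather than on $e^{Y_n}$ itself (trivially obtained via Jensen/monotonicity of conditional expectation), and you should say explicitly that the Jacobian of the rescaling $t\mapsto t/\b$ only multiplies the density by the constant $\b^{|D|}$, preserving boundedness for fixed $(|D_x|)$.
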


\begin{proof}
From~\eqref{pf} and rotational invariance we need to show that
\begin{equation}\label{e1}
\frac{Z(W_{r,n})}{Z(0)}=
\mu\Big[\exp\Big(\frac{r}{\d}\sum_{x\in\L}\sum_{j=1}^{|D_x|}
(-1)^{\xi_x+j+\lfloor 2^nt^x_j/\b\rfloor}\Big)\Big]
\rightarrow \z(r)
\end{equation}
as $n\rightarrow\oo$.  
To motivate the argument that follows, let 
$(Y_j^x:x\in\L,j\geq1)$
be independent random variables,
each taking the values $\pm 1$ with equal probability
under $\mu$.  Then (by conditioning on the $|D_x|$)
\[
\mu\Big[\exp\Big(\frac{r}{\d}\sum_{x\in\L}\sum_{j=1}^{|D_x|}
Y_j^x\Big)\Big]=
\mu\big[\mu[\exp(rY^0_1/\d)]^{|D|}\big]=
\mu[\cosh(r/\d)^{|D|}]=\z(r).
\]
The strategy for proving~\eqref{e1} will be to first
condition on the `rough' locations of the $t^x_j$,
and then use Lemma~\ref{binary_lem} to deduce that
the conditional joint distribution of the numbers
$(-1)^{\xi_x+j+\lfloor 2^nt^x_j/\b\rfloor}$ 
approaches that of
the $Y^x_j$.  Here are the details.

Writing
\[
H_n=\frac{1}{\d}\sum_{x\in\L}\sum_{j=1}^{|D_x|}
(-1)^{\xi_x+j+\lfloor 2^nt^x_j/\b\rfloor},
\]
first note that the sequence $(e^{rH_n}:n\geq1)$
is uniformly integrable under $\mu$, for each $r\in\RR$.
Indeed, a sufficient condition for uniform 
integrability is that
\[
\sup_{n\geq1}\mu\big[|e^{rH_n}|^2\big]
=\sup_{n\geq1}\mu[e^{2rH_n}]<\oo.
\]
This follows from Lemma~\ref{muD_lem} and the fact that
$|H_n|\leq |D|/\d$.

Let $\eps>0$ be arbitrary.
Let $M$ be some (large) integer, and let $A_M$
denote the event that each $|D_x|$ is at most $2M$.
Then $\mu(A_M)\rightarrow1$ as $M\rightarrow\oo$.
Next, let $L$ be another (large)
integer, and let $B_L$ denote the event that for each 
$x\in\L$ there are integers 
$0\leq k_1^x<k_2^x<\dotsb<k_{|D_x|}^x\leq 2^L-1$
such that each 
$t^x_j\in[k^x_j2^{-L}\b,(k_j^x+1)2^{-L}\b)$.
Then also $\mu(B_L)\rightarrow1$ as $L\rightarrow\oo$.
For each $\a>0$ we have that
\[
\sup_{n\geq1}\mu[e^{r H_n}\one_{A_M^\tc\cup B_L^\tc}]\leq
\sup_{n\geq1}\mu[e^{r H_n}\one\{e^{rH_n}>\a\}]+
\a \mu(A_M^\tc\cup B_L^\tc).
\]
By uniform integrability, this can be made smaller than
$\eps$ by first choosing $\a$ large enough that
$\sup_{n\geq1}\mu[e^{r H_n}\one\{e^{rH_n}>\a\}]<\eps/2$
and then $M,L$ large enough that 
$\a \mu(A_M^\tc\cup B_L^\tc)<\eps/2$.
In what follows we 
assume that $M$ and $L$ are fixed, and large enough
that
\begin{equation}\label{e2}
\sup_{n\geq1}\mu[e^{r H_n}\one_{A_M^\tc\cup B_L^\tc}]<\eps.
\end{equation}

Let $\mu'$ denote $\mu$ conditioned on the following:
\begin{enumerate}
\item that $A_M$ and $B_L$ both occur;
\item the vector $\xi=(\xi_x:x\in\L)$;
\item the sizes $|D_x|=2m_x$ for all $x\in\L$;
\item and the numbers 
$0\leq k_1^x<k_2^x<\dotsb<k_{2m_x}^x\leq 2^L-1$
such that each
$t^x_j\in[k^x_j2^{-L}\b,(k_j^x+1)2^{-L}\b)$.
\end{enumerate}
Let $m=\sum_{x\in\L}m_x$ so that $|D|=2m$, and let
\[
T^x_j=\frac{t^x_j-k^x_j2^{-L}\b}{2^{-L}\b}.
\]
Then $T=(T^x_j:x\in\L,j\in\{1,\dotsc,2m_x\})$
is a random vector in $[0,1]^{2m}$.  Moreover, by
Lemma~\ref{density_lem}
the law of $T$ under $\mu'$ has square integrable density
with respect to Lebesgue measure on $[0,1]^{2m}$.

Let $n\geq 1$ and
write $X^x_j=\one\{T^x_j\in\cO^{(L+n)}\}$.  Note that
$X^x_j$ has the same parity as 
$\lfloor 2^{L+n}t^x_j/\b\rfloor$.  Let 
$a=(a^x_j:x\in\L,1\leq j\leq 2m_x)\in\{-1,1\}^{2m}$
be arbitrary, and write
\[
X'=((-1)^{\xi_x+j+X^x_j}:x\in\L,j\in\{1,\dotsc,2m_x\}).
\]
By Lemma~\ref{binary_lem} we have that
$\mu'(X'=a)\rightarrow 2^{-m}$ as $n\rightarrow\oo$
for any $a\in\{-1,+1\}^{2m}$, and hence that
\begin{equation}\label{e3}
\mu'\Big[\exp\Big(\frac{r}{\d}\sum_{x\in\L}\sum_{j=1}^{2m_x}
(-1)^{\xi_x+j+X^x_j}\Big)\Big]\rightarrow
\mu'\Big[\exp\Big(\frac{r}{\d}\sum_{x\in\L}\sum_{j=1}^{2m_x}
Y_j^x\Big)\Big]=\cosh(r/\d)^{2m}.
\end{equation}
Now, with some slight abuse of notation for 
conditional expectation,
\[
\begin{split}
\frac{Z(W_{r,L+n})}{Z(0)}&=
\mu\Big[\exp\Big(\frac{r}{\d}\sum_{x\in\L}\sum_{j=1}^{|D_x|}
(-1)^{\xi_x+j+X^x_j}\Big)\Big]\\
&=\mu[e^{rH_{L+n}}\one_{A_M^\tc\cup B_L^\tc}]
+\mu\Big[\mu'\Big[\exp\Big(\frac{r}{\d}\sum_{x\in\L}\sum_{j=1}^{2m_x}
(-1)^{\xi_x+j+X^x_j}\Big)\Big]\Big].
\end{split}
\]
The first term is smaller than $\eps$, by~\eqref{e2}.
For fixed $M$ and $L$,
the outer expectation in the second term is over a finite
set of possibilities, namely the possible values of the
$\xi_x$, the $|D_x|$ (each being at most $2M$) and 
the $k^x_j$, as listed in the definition of $\mu'$.
By making $n$ sufficiently large, we may
by~\eqref{e3} assume that 
the integrand differs from $\cosh(r)^{2m}$ by at most
$\eps$ for each such possibility.  It then follows that
\[
\Big|\frac{Z(W_{r,L+n})}{Z(0)}-\mu[\cosh(r/\d)^{|D|}]\Big|< 2\eps.
\]
The result follows.
\end{proof}

The rough strategy for proving~\eqref{gd_lem}
will be to define a procedure by which
$h$ can be altered so that it more and more resembles
$W_{r,n}$ for some $r$, whilst increasing $Z(h)$.
First we need a Cauchy--Schwarz-type inequality
along the lines of Lemma~\ref{spatial_cs_lem}.

Let $\theta:\Ob\rightarrow\Ob$ be
given by $\theta(t)=\b-t$.  We may think of $\theta$
as a reflection of the circle 
$\Ob=\{e^{2\pi it/\b}:t\in[0,\b)\}$ in the
real line.  Recall that the measure
$E_\times[\cdot]$ governs $D$ only, which is a
Poisson process conditioned on each $D_x$ having even size.
For each $x\in\L$, let $D^+_x=D_x\cap(0,\b/2)$ and 
$D^-_x=D\cap(\b/2,\b)$.  Write $D^+=(D_x^+:x\in\L)$
and $D^-=(D_x^-:x\in\L)$;  also write
$\theta D^-=\{\theta t:t\in D^-\}\se(0,\b/2)$.

\begin{lemma}\label{cs_vert_lem}
The measure $E_\times[\cdot]$ is `reflection positive' in that
for any bounded, measurable function $F$ of $D^+$ we have that
\begin{equation}\label{po_rp_eq}
E_\times[F(D^+)F(\theta D^-)]\geq 0.
\end{equation}
Consequently we have for all bounded measurable $F$, $G$ that
\begin{equation}\label{cs2_eq}
E_\times[F(D^+)G(\theta D^-)]^2\leq 
E_\times[F(D^+)F(\theta D^-)]E_\times[G(D^+)G(\theta D^-)].
\end{equation}
\end{lemma}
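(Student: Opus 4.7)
The plan is to first establish the reflection positivity~\eqref{po_rp_eq} directly and then deduce the Cauchy--Schwarz inequality~\eqref{cs2_eq} as a formal consequence. On bounded measurable functions of a configuration in $\L\times(0,\b/2)$, the bilinear form $\langle F,G\rangle := E_\times[F(D^+)G(\theta D^-)]$ is symmetric: the global reflection $D\mapsto\theta D$ preserves $E_\times$ (uniform Poisson intensity and the even-size conditioning are both invariant under $\theta$) and swaps the roles of $D^+$ and $\theta D^-$, so $\langle F,G\rangle=\langle G,F\rangle$. Given~\eqref{po_rp_eq}, which asserts $\langle F,F\rangle\geq0$, the standard Cauchy--Schwarz inequality for symmetric positive semi-definite bilinear forms yields~\eqref{cs2_eq}.

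To prove~\eqref{po_rp_eq}, write $P_\times$ for the unconditioned product Poisson measure of intensity $\d$, so that $E_\times[\,\cdot\,]$ is a positive multiple of $P_\times\big[\,\cdot\,\prod_{x\in\L}\one\{|D_x|\mbox{ even}\}\big]$. The main obstacle is that the even-size conditioning couples $D^+_x$ with $D^-_x$ and so appears to break the independence one would hope to exploit. I would disentangle it using the identity
\[
\one\{|D_x|\mbox{ even}\} = \tfrac{1}{2}\bigl(1+(-1)^{|D^+_x|}(-1)^{|D^-_x|}\bigr),
\]
and expand the product over $x\in\L$ to get
\[
\prod_{x\in\L}\one\{|D_x|\mbox{ even}\} = \frac{1}{2^{|\L|}}\sum_{S\se\L}\prod_{x\in S}(-1)^{|D^+_x|}\prod_{x\in S}(-1)^{|D^-_x|}.
\]

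Substituting into $P_\times\bigl[F(D^+)F(\theta D^-)\prod_x\one\{|D_x|\mbox{ even}\}\bigr]$ and using independence of $D^+$ and $D^-$ under $P_\times$, each term of the resulting sum over $S$ factorises as
\[
P_\times\Big[F(D^+)\prod_{x\in S}(-1)^{|D^+_x|}\Big]\cdot P_\times\Big[F(\theta D^-)\prod_{x\in S}(-1)^{|D^-_x|}\Big].
\]
Since $\theta D^-$ has the same $P_\times$-law as $D^+$ and $|\theta D^-_x|=|D^-_x|$, a change of variable in the second factor identifies it with the first; each summand is therefore a square of a real number and in particular non-negative. Summing over $S\se\L$ gives $E_\times[F(D^+)F(\theta D^-)]\geq 0$, as required. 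The only genuine difficulty is extracting independence from the even-parity conditioning, which the $(-1)^n$-expansion resolves cleanly.
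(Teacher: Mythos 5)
Your proof is correct, and it takes a genuinely different (though closely related) route from the paper. The paper establishes~\eqref{po_rp_eq} by conditioning: given the parity pattern $(|D^+_x| \bmod 2)_{x\in\L}$, the constraint that each $|D_x|$ is even forces $|D^-_x|$ to have the same parity as $|D^+_x|$, and conditionally $D^+$ and $\theta D^-$ become i.i.d.; summing the resulting conditional expectations squared, weighted by the probabilities of the parity patterns, yields non-negativity. You instead pass to the unconditioned product Poisson measure $P_\times$, where $D^+$ and $D^-$ are independent outright, and absorb the parity constraint into the integrand via the character expansion $\one\{|D_x|\text{ even}\}=\tfrac12\bigl(1+(-1)^{|D^+_x|}(-1)^{|D^-_x|}\bigr)$, expanded over subsets $S\se\L$. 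Each summand then factorises by independence into a perfect square. The two arguments are essentially Fourier-dual to each other (summing over parity patterns vs.\ summing over characters of $(\ZZ/2\ZZ)^\L$); yours is slightly more algebraic and avoids explicit manipulation of conditional expectations, which arguably makes the positivity of each term more transparent. Your deduction of~\eqref{cs2_eq} from symmetry plus~\eqref{po_rp_eq} is also fine and matches the paper's discriminant argument in substance, once you note, as you correctly do, that $\theta$-invariance of $E_\times$ makes the form $\langle F,G\rangle$ symmetric.
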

\begin{proof}
For~\eqref{po_rp_eq}, condition on the parity of each $|D^+_x|$.
Note that $|D^-_x|$ necessarily has the same parity as $|D^+_x|$.
Given this parity, $\theta D^-_x$ is independent of,
and identically distributed as, $D^+_x$.  This gives
\[
E_\times[F(D^+)F(\theta D^-)]=
\sum_{j\in\{0,1\}^\L}E_\times[F(D^+)\mid 
\forall x\in\L,\,|D_x|\equiv j_x\mbox{ mod } 2]^2,
\]
and hence~\eqref{po_rp_eq}.

The Cauchy--Schwarz inequality~\eqref{cs2_eq} is a standard 
consequence of~\eqref{po_rp_eq} seeing as $D^+$ and $\theta D^-$
are identically distributed:
for any $t\in\RR$ we have that
\[
\begin{split}
0&\leq E_\times[(F(D^+)+t G(D^+))
(F(\theta D^-)+t G(\theta D^-))]\\
&=E_\times[F(D^+)F(\theta D^-)]+
2t E_\times[F(D^+)G(\theta D^-)]+
t^2E_\times[G(D^+)G(\theta D^-)].
\end{split}
\]
So the discriminant
\[
4 E_\times[F(D^+)G(\theta D^-)]^2-
4E_\times[F(D^+)F(\theta D^-)]E_\times[G(D^+)G(\theta D^-)]\leq 0,
\]
which gives~\eqref{cs2_eq}.
\end{proof}

For any  $f:\Ob\rightarrow\RR$
define $f_+$ and $f_-$ by
\begin{equation}\label{ominous1}
f_+(t)=\left\{
\begin{array}{ll}
f(t), & \mbox{if } t\in[0,\b/2),\\
f(\theta t), & \mbox{if } t\in[\b/2,\b),
\end{array}\right.
\end{equation}
and
\begin{equation}\label{ominous2}
f_-(t)=\left\{
\begin{array}{ll}
f(\theta t), & \mbox{if } t\in[0,\b/2),\\
f(t) & \mbox{if } t\in[\b/2,\b).
\end{array}\right.
\end{equation}
If $f$ has a weak derivative $f'$ then
$f_+$ and $f_-$ have weak derivatives $f'_+$
and $f'_-$ satisfying
\begin{equation}\label{plus}
f'_+(t)=\left\{
\begin{array}{ll}
f'(t), & \mbox{if } t\in(0,\b/2),\\
-f'(\theta t), & \mbox{if } t\in(\b/2,\b),
\end{array}\right.
\end{equation}
and
\begin{equation}\label{minus}
f'_-(t)=\left\{
\begin{array}{ll}
-f'(\theta t), & \mbox{if } t\in(0,\b/2),\\
f'(t) & \mbox{if } t\in(\b/2,\b).
\end{array}\right.
\end{equation}
See Figure~\ref{pm_fig} for an illustration.
\begin{figure}[hbt]
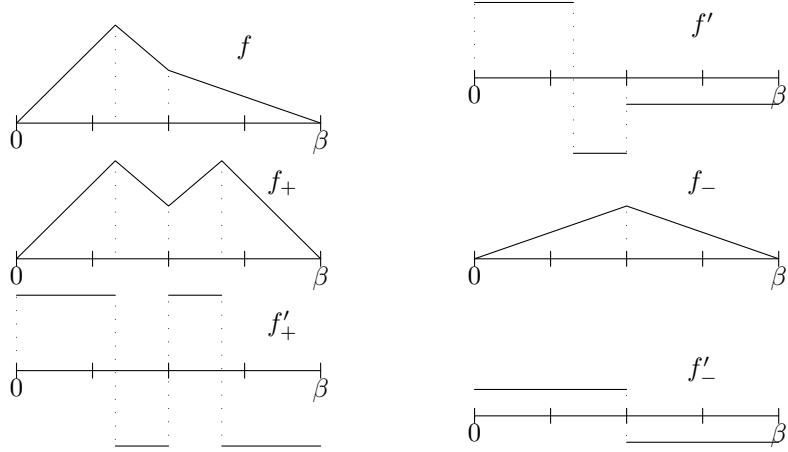

\centering
\includegraphics{irb.4}
\qquad\qquad\includegraphics{irb.5}\\
\includegraphics{irb.6}
\qquad\qquad\includegraphics{irb.7}\\
\includegraphics{irb.8}
\qquad\qquad\includegraphics{irb.9}
\caption{An example of $f_+$ and $f_-$ and their derivatives.}
\label{pm_fig}   
\end{figure}

The following result parallels Lemma~\ref{spatial_symm_lem}.
\begin{lemma}\label{vert_pm_lem}
Let $h:\Ob\rightarrow\RR$ have a bounded weak
derivative.  Then $Z(h)\leq \max\{Z(h_+), Z(h_-)\}$.
\end{lemma}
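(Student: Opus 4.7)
The plan is to establish the stronger inequality $Z(h)^2 \leq Z(h_+) Z(h_-)$, which implies the lemma since both right-hand factors are positive. The main tool would be Lemma~\ref{cs_vert_lem} applied conditionally on $\xi$, in direct analogy with the role of Lemma~\ref{spatial_cs_lem} in proving Lemma~\ref{spatial_symm_lem}.

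The first step is to decompose $D = \eta \cup \theta\zeta$ with $\eta = D^+$ and $\zeta = \theta D^-$, both contained in $(0,\b/2)$, and rewrite $Z(h)$ (in the form~\eqref{pf}) in terms of $(\eta,\zeta,\xi)$. Splitting the time integral at $\b/2$ and substituting $t\mapsto \theta t$ in the upper half yields, up to an additive constant,
\[
-\l\int_0^\b \el L\s(\cdot,t),\s(\cdot,t)\er\,dt = \l Q(\eta,\xi) + \l Q(\zeta,\xi),
\]
with a single formula $Q(\mu,\xi) := \sum_{x\sim y}(-1)^{\xi_x+\xi_y}\int_0^{\b/2}(-1)^{|\mu_x\cap[0,t]|+|\mu_y\cap[0,t]|}\,dt$; evenness of $|D_x|$ is what makes this symmetric form work, through the identity $(-1)^{|D_x\cap[0,\theta s]|}=(-1)^{|D^-_x\cap(\theta s,\b]|}=(-1)^{|\zeta_x\cap[0,s]|}$. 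For the $h'$ term, separating points in $\eta_x$ from those in $\theta\zeta_x$, reindexing the latter in reverse order, and using $-h'(\theta s)=\tilde h'(s)$ (for $\tilde h(s):=h(\theta s)$) together with $(-1)^{|\eta_x|+|\zeta_x|}=1$ gives
\[
\sum_{j=1}^{|D_x|} h'(t^x_j)(-1)^{\xi_x+j} = \Phi_x(h,\eta,\xi) + \Phi_x(\tilde h,\zeta,\xi),
\]
where $\Phi_x(f,\mu,\xi):=\sum_j f'(\mu_{x,j})(-1)^{\xi_x+j}$. Setting $F(\eta,\xi):=\l Q(\eta,\xi)-\d^{-1}\sum_x \Phi_x(h,\eta,\xi)$ and $G(\zeta,\xi):=\l Q(\zeta,\xi)-\d^{-1}\sum_x \Phi_x(\tilde h,\zeta,\xi)$ then gives $Z(h) = C\cdot E_0 E_\times[\exp(F(\eta,\xi)+G(\zeta,\xi))]$ for a constant $C$ independent of $h$.

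Conditioning on $\xi$ and applying Lemma~\ref{cs_vert_lem} to $D^+\mapsto e^{F(D^+,\xi)}$ and $\theta D^-\mapsto e^{G(\theta D^-,\xi)}$ yields
\[
\big(E_\times[e^{F(\eta,\xi)+G(\zeta,\xi)}]\big)^2 \leq E_\times[e^{F(\eta,\xi)+F(\zeta,\xi)}]\cdot E_\times[e^{G(\eta,\xi)+G(\zeta,\xi)}].
\]
The $\theta$-symmetry $h_+(\theta s)=h_+(s)$ implies $h_+|_{(0,\b/2)}=\widetilde{h_+}|_{(0,\b/2)}=h$, so applying the decomposition of the previous paragraph with $h$ replaced by $h_+$ produces precisely $F(\eta,\xi)+F(\zeta,\xi)$ in the exponent of $Z(h_+)$; similarly $h_-|_{(0,\b/2)}=\widetilde{h_-}|_{(0,\b/2)}=\tilde h$ handles $Z(h_-)$. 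Averaging the displayed inequality against $E_0$ and invoking Cauchy--Schwarz, $E_0[\sqrt{AB}]\leq\sqrt{E_0[A]\,E_0[B]}$, gives $Z(h)^2\leq Z(h_+)Z(h_-)$, as required.

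The main technical obstacle would be the sign bookkeeping in the $h'$ decomposition: the factor $(-1)^j$ depends on the global index of a point within $D_x$, and expressing the contribution from $\theta\zeta_x$ intrinsically in terms of $\zeta_x$ (with $h$ replaced by $\tilde h$) requires both the evenness of $|D_x|$ and the orientation reversal induced by $\theta$ to conspire correctly; the same parity fact is what renders $Q$ a single symmetric function of its Poisson-process argument.
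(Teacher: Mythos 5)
Your proposal is correct and follows essentially the same route as the paper's proof: condition on $\xi$, decompose the exponent in $Z(h\mid\xi)$ into a part $F$ depending on $D^+$ and a part $G$ depending on $\theta D^-$ (your $F,G$ are the paper's $A_\xi,B_\xi$, with $\tilde h' = h'_-|_{(0,\b/2)}$ and $h' = h'_+|_{(0,\b/2)}$), apply the reflection-positivity Cauchy--Schwarz of Lemma~\ref{cs_vert_lem}, identify the two resulting expectations with $Z(h_+\mid\xi)$ and $Z(h_-\mid\xi)$, and average over $\xi$ by another Cauchy--Schwarz. The parity bookkeeping you flag (reverse reindexing of $D^-$ and evenness of $|D_x|$ giving $(-1)^{|D_x|-j+1}=(-1)^{j+1}$) is exactly the paper's computation, so the sign structure goes through as you anticipate.
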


\begin{proof}
Conditioning on $\xi=(\xi_x:x\in\L)\in\{0,1\}^\L$, 
we may write $Z(h)=\frac{1}{2^{|\L|}}\sum_{\xi}Z(h\mid\xi)$,
where 
\begin{multline}\label{pf5}
Z(h\mid\xi):=E_\times\Big[\exp\Big(
-\l\int_0^\b 
\el L\s(\cdot,t),\s(\cdot,t)\er\,dt-
\\-\frac{1}{\d}\sum_{x\in\L}\sum_{j=1}^{|D_x|}(-1)^{j+\xi_x}h'(t^x_j)
\Big)\Big].
\end{multline}
We will express $Z(h\mid\xi)$ in the form
\begin{equation}\label{c2}
Z(h\mid\xi)=E_\times[\exp\big(A_\xi(D^+)+B_\xi(\theta D^-)\big)]
\end{equation}
 and then use
Lemma~\ref{cs_vert_lem}.  First we need some notation.

Write $r_j^x$, $j=1,\dotsc,|D^+_x|$, for the elements of $D^+_x$
ordered so that $r_j^x<r_{j+1}^x$ for all $j$.
Also let $r_0^x=0$ and $r_{|D_x^+|+1}^x=\b/2$.  Similarly, write
$s_j^x$, $j=1,\dotsc,|D^-_x|$, for the elements of $D^-_x$
\emph{but ordered so that $s_j^x>s_{j+1}^x$ for all $j$};
in other words so that
$\theta s_j^x<\theta s_{j+1}^x$ for all $j$.
Also let $s_0^x=0$ and $s_{|D_x^-|+1}^x=\b/2$.  
Note that
\begin{equation*}
t^x_j=\left\{\begin{array}{ll}
r^x_j, & \mbox{for } 1\leq j\leq |D_x^+|,\\
s^x_{|D_x|-j+1}, & \mbox{for } |D_x^+|+1\leq j\leq |D_x|.
\end{array}\right.
\end{equation*}
See Figure~\ref{circle_fig}.
\begin{figure}[hbt]
\centering
\includegraphics{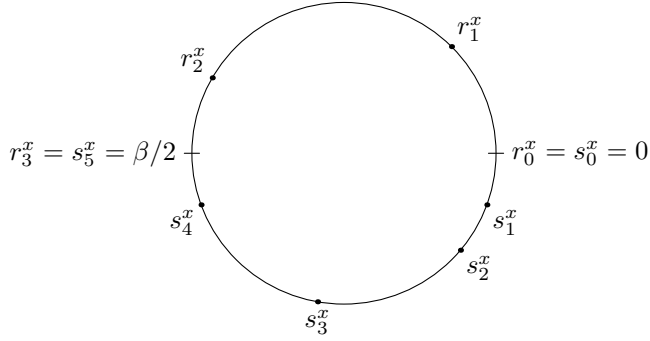}
\caption{Ordering of the elements of 
$D^+_x=\{r_1^x,r_2^x\}$ and $D^-_x=\{s_1^x,s_2^x,s_3^x,s_4^x\}$.}
\label{circle_fig}   
\end{figure}

In the last sum in~\eqref{pf5} we have
\[
\begin{split}
\sum_{j=1}^{|D_x|}(-1)^{j+\xi_x}h'(t_j^x)&=
\sum_{j=1}^{|D^+_x|}(-1)^{j+\xi_x}h'(r_j^x)
+\sum_{j=1}^{|D^-_x|}(-1)^{|D_x|-j+1+\xi_x}h'(s_j^x)\\
&=\sum_{j=1}^{|D^+_x|}(-1)^{j+\xi_x}h'_+(r_j^x)
+\sum_{j=1}^{|\theta D^-_x|}(-1)^{j+\xi_x}h'_-(\theta s_j^x).
\end{split}
\]
Hence~\eqref{c2} holds with
\[
A_\xi(D^+)=-\l\int_0^{\b/2}
\el L\s(\cdot,t),\s(\cdot,t)\er\,dt
-\frac{1}{\d}\sum_{x\in\L}\sum_{j=1}^{|D^+_x|}(-1)^{j+\xi_x}h'_+(r^x_j)
\]
and
\[
B_\xi(D^+)=-\l\int_0^{\b/2}
\el L\s(\cdot,t),\s(\cdot,t)\er\,dt
-\frac{1}{\d}\sum_{x\in\L}\sum_{j=1}^{|D^+_x|}(-1)^{j+\xi_x}h'_-(r^x_j).
\]
Next note that
\[
\begin{split}
A_\xi(\theta D^{-})&=\sum_{j=1}^{|\theta D_x^-|}(-1)^{j+\xi_x}h'_+(\theta s_j^x)
=\sum_{j=1}^{|D_x^-|}(-1)^{j+\xi_x}(-h'_+(s_j^x))\\
&=\sum_{j=1}^{|D^-_x|}(-1)^{j+\xi_x+1}h'_+(t_{|D_x|-j+1}^x)
=\sum_{j=|D_x^+|+1}^{|D_x|}(-1)^{j+\xi_x}h'_+(t_{j}^x).
\end{split}
\]
Here we used the fact that $|D_x^+|+|D_x^-|=|D_x|$ is even
so that $(-1)^{|D_x^+|}=(-1)^{|D_x^-|}$.
It follows that 
$Z(h_+\mid\xi)=
E_\times\big[\exp\big(A_\xi(D^+)+A_\xi(\theta D^-)\big)\big]$.
Similarly 
$Z(h_-\mid\xi)=
E_\times\big[\exp\big(B_\xi(D^+)+B_\xi(\theta D^-)\big)\big]$.
Therefore we get from Lemma~\ref{cs_vert_lem} that
\begin{equation}\label{c3}
\begin{split}
Z(h\mid\xi)^2&\leq 
E_\times\big[\exp\big(A_\xi(D^+)+A_\xi(\theta D^-)\big)\big]\\
&\qquad \cdot E_\times\big[\exp\big(B_\xi(D^+)+B_\xi(\theta D^-)\big)\big]\\
&=Z(h_+\mid\xi)Z(h_-\mid\xi).
\end{split}
\end{equation}
From~\eqref{c3} and the usual Cauchy--Schwarz inequality,
\begin{equation}\label{c4}
Z(h)^2\leq\Big(\frac{1}{2^{|\L|}}\sum_{\xi\in\{0,1\}}
\sqrt{Z(h_+\mid\xi)}
\sqrt{Z(h_-\mid\xi)}\Big)^2\leq
Z(h_+)Z(h_-).
\end{equation}
Finally,~\eqref{c4} implies that at least one of 
$Z(h_+)$ and $Z(h_-)$ is at least $Z(h)$,
as required.
\end{proof}

\begin{definition}[Symmetrization]
Let $h:\Ob\rightarrow\RR$ have a bounded weak derivative
and let $t\in[0,\b/2)$.  The
\emph{symmetrization of $h$ at $t$}
is the function $g$ given by the completing the
following steps:
\begin{enumerate}
\item Let $\tilde h:s\mapsto h(s+t)$;
\item Let $\tilde h_+$ and $\tilde h_-$
be as in~\eqref{ominous1} and~\eqref{ominous2};
\item Let
\[
\tilde g=\left\{\begin{array}{ll}
\tilde h_+, &\mbox{ if } 
Z(\tilde h_+)\geq Z(\tilde h_+),\\
\tilde h_-, &\mbox{ otherwise};
\end{array}\right.
\]
\item Let $g(s)=\tilde g(s-t)$.
\end{enumerate}
\end{definition}
Note that if $g$ is the symmetrization of $h$
at $t$ then $g$ is symmetric at $t$ and $t+\b/2$;
also $Z(g)\geq Z(h)$ by Lemma~\ref{vert_pm_lem}.

The strategy for proving~\eqref{gd_lem} is 
to successively symmetrize $h$ at
dyadic points of finer and finer partition.
Thereby our function
more and more resembles $W_{r,n}$ for some $r$.  By 
Lemma~\ref{white_lem} we know that 
$\lim_{n\rightarrow\oo}Z(W_{r,n})=\z(r) Z(0)$.
We now make this precise.

\begin{definition}[Snippet]\label{snippet_def}
Let $f:\Ob\rightarrow\RR$.  For $n\geq0$ we call
a function $g:\Ob\rightarrow\RR$ a 
\emph{level $n$ snippet} of $f$ if 
\begin{enumerate}
\item there is $k\in\{0,1,\dotsc,2^n-1\}$ and $a\in\{0,1\}$ 
such that
$g(t)=f(k2^{-n}\b+(-1)^a t)$ for all $0<t<2^{-n}\b$, and
\item $g(m2^{-n}\b+t)=g(m2^{-n}\b-t)$ for all
$m\in\{0,1,\dotsc,2^n-1\}$ and all $0<t<2^{-n}\b$.
\end{enumerate}
\end{definition}
Thus a level $n$ snippet of $f$ 
repeats the values that $f$  takes
on an interval $(k2^{-n}\b,(k+1)2^{-n}\b)$, but alternates
between `the right way' and `backwards'.
Note that $h_+$ and $h_-$ are
level 1 snippets of $h$.

\begin{lemma}\label{snippet_lem}
Let $h:\Ob\rightarrow\RR$ have a bounded weak derivative.
There is a sequence $(h_n:n\geq 0)$ of functions 
$h_n:\Ob\rightarrow\RR$ such that
\begin{enumerate}
\item $h_0=h$,
\item for each $n\geq1$, $h_{n}$ is a level $n$ snippet
of $h_{n-1}$, and
\item for each $n\geq1$, $Z(h_n)\geq Z(h_{n-1})$.
\end{enumerate}
\end{lemma}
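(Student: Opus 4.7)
The plan is induction on $n$. The base case $n=0$ is trivial. For the inductive step I will construct $h_n$ from $h_{n-1}$ by a carefully chosen sequence of applications of the Symmetrization operation defined immediately above the lemma. Each application does not decrease $Z$, by Lemma~\ref{vert_pm_lem}, so the main work is to arrange that the end product is genuinely a level $n$ snippet of $h_{n-1}$.

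For $n=1$, a single symmetrization of $h_0=h$ at $t=0$ produces $h_1\in\{h_+,h_-\}$ with $Z(h_1)\geq Z(h)$; this $h_1$ is symmetric at $0$ and $\b/2$ by~\eqref{ominous1}--\eqref{ominous2}, hence is a level $1$ snippet of~$h$. For $n\geq 2$ I apply Symmetrization successively at the $2^{n-2}$ points $t_k=(2k+1)2^{-n}\b$ for $k=0,1,\dotsc,2^{n-2}-1$, defining $h_n$ to be the final output. Each step is an instance of Lemma~\ref{vert_pm_lem} (after translation), so $Z(h_n)\geq Z(h_{n-1})$; and by construction $h_n$ is symmetric at each $t_k$ together with its antipode $t_k+\b/2$, that is, at every odd multiple of $2^{-n}\b$ in $\Ob$. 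Moreover, since the Symmetrization recipe only ever rearranges pieces of the existing function, the restriction of $h_n$ to $[0,2^{-n}\b)$ is a translated and possibly reflected piece of $h_{n-1}$, which gives condition~(1) of Definition~\ref{snippet_def}.

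The principal obstacle is to verify condition~(2): that $h_n$ is also symmetric at the even multiples of $2^{-n}\b$, i.e.\ at the level $n-1$ points of $\Ob$. An individual symmetrization at $t_k$ can break some of the symmetries the intermediate function had inherited from $h_{n-1}$, so this has to be established as a joint property of the full sequence of $2^{n-2}$ symmetrizations. My strategy is to exploit the level $n-1$ structure of $h_{n-1}$ explicitly: its half-tile on $[0,2^{-(n-1)}\b)$ splits into two pieces $A,B$ of length $2^{-n}\b$, and a direct computation using the explicit Symmetrization formulas shows that (depending on the $\pm$-choices made at each step) the final pattern on $\Ob$ collapses to either $A,A^R,A,A^R,\dotsc$ or $B,B^R,B,B^R,\dotsc$, where $A^R$ denotes the reflection of $A$. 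Such a pattern is manifestly symmetric at every multiple of $2^{-n}\b$, so $h_n$ is indeed a level $n$ snippet. The case $n=3$, where symmetrizing $h_2$ (pattern $A,B,B^R,A^R,A,B,B^R,A^R$) at $\b/8$ and then at $3\b/8$ produces the pattern $A,A^R,A,A^R,A,A^R,A,A^R$, is the illustrative small computation underlying the general argument.
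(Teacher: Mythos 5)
Your construction — iterated symmetrization at the $2^{n-2}$ points $t_k=(2k+1)2^{-n}\b$, $k=0,\dots,2^{n-2}-1$ — is exactly the paper's own procedure, so the approach is the same. You are also right that the one real issue is verifying condition (2) of Definition~\ref{snippet_def} (the paper is silent on this). However, the way you propose to close that gap does not work: the claim that ``(depending on the $\pm$-choices made at each step) the final pattern on $\Ob$ collapses to either $A,A^R,A,A^R,\dotsc$ or $B,B^R,B,B^R,\dotsc$'' is true at $n=3$ (the case you computed) but \emph{false} at $n=4$, so the $n=3$ computation does not generalize.

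Here is a concrete counterexample. Write $h_3=(A,B,B^R,A^R)^4$ in blocks of length $\b/16$, and perform the four symmetrizations at $\b/16,\,3\b/16,\,5\b/16,\,7\b/16$ with the sign choices $+,-,+,-$. Tracking the $16$ blocks through the four steps (shift; apply $(\cdot)_{\pm}$ via~\eqref{ominous1}--\eqref{ominous2}; shift back) gives
\[
g_4=(B^R,\,A^R,\,A,\,B,\,B^R,\,B,\,B^R,\,B,\,B^R,\,B,\,B^R,\,A^R,\,A,\,B,\,B^R,\,B),
\]
which is not of the alternating form $X,X^R,X,X^R,\dotsc$ and hence is not a level $4$ snippet (for instance block $0$ is $B^R$ but block $2$ is $A$). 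One also finds that $+,+,-,-$ and $+,+,+,-$ and $-,+,-,+$ all fail, while the uniform choices $+,+,+,+$ and $-,-,-,-$ do produce $(A,A^R)^8$. The mechanism is that symmetrizing at $3\b/16$ reflects $\b/16$ to $5\b/16$, and since the intermediate function is in general not symmetric about $5\b/16$, the symmetry previously established at $\b/16$ is destroyed.

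The difficulty is that the $\pm$ at each step is dictated by which of $Z(\tilde h_+)$, $Z(\tilde h_-)$ is larger, and nothing in Lemma~\ref{vert_pm_lem} forces those greedy choices to be mutually consistent (all $+$ or all $-$). So the sequence you (and the paper) produce is guaranteed to have nondecreasing $Z$-values, but is \emph{not} guaranteed to terminate in a level $n$ snippet of $h_{n-1}$ once $n\ge 4$. To actually close the gap one needs an extra argument — for instance, a chessboard-type estimate showing directly that $\max\{Z((A,A^R)^{2^{n-1}}),\,Z((B,B^R)^{2^{n-1}})\}\ge Z(h_{n-1})$, or a different reflection scheme under which each new symmetry provably survives the subsequent reflections. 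To be fair, the paper's own proof asserts the same conclusion with no more justification than you give, so this is a gap in the published argument as much as in yours; but the ``direct computation'' you invoke does not, as stated, establish the claim.
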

\begin{proof}
We are free to set $h_0=h$.  Let $h_1$ be 
the symmetrization of $h_0$ at $0$,
that is to say $h_1$ is either
$h_+$ or $h_-$, chosen so that $Z(h_1)\geq Z(h)$.
This is possible due to
Lemma~\ref{vert_pm_lem}.
Next let $h_2$ be the symmetrization of $h_1$
at $\b/4$.  Then $h_2$ is a level 2 snippet of $h_1$,
and Lemma~\ref{vert_pm_lem} implies that 
$Z(h_2)\geq Z(h_1)$.  
To get $h_3$ from $h_2$, the symmetrization procedure 
must be carried out twice, as follows.
First let $g$ be the symmetrization of 
$h_2$ at $\b/8$;
then let $h_3$ be the symmetrization of $g$
at $3\b/8$.  Lemma~\ref{vert_pm_lem} implies that 
$Z(h_3)\geq Z(h_2)$.

This process is carried out inductively.  To get 
$h_n$ from $h_{n-1}$,  symmetrization
must be carried out $(2^n-2^{n-1})/2=2^{n-2}$
times, once at each point of $(0,\b/2)$ which is dyadic
of level $n$ but not of level $n-1$.  This gives a level
$n$ snippet $h_n$ of $h_{n-1}$ such that 
$Z(h_n)\geq Z(h_{n-1})$.
\end{proof}

For  $h:\Ob\rightarrow\RR$ weakly differentiable, write
$\|h\|'=\|h'\|_\oo$
The following lemma is immediate from the definition~\eqref{pf}.
\begin{lemma}\label{cty_lem}
$Z(\cdot)$ is continuous in the norm $\|\cdot\|'$.
\end{lemma}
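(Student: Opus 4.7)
The plan is to prove continuity directly from the representation~\eqref{pf} by dominated convergence. Suppose $h_n\to h$ in $\|\cdot\|'$, so that $\|h_n'-h'\|_\oo\to 0$ and in particular $M:=\sup_n\|h_n'\|_\oo<\oo$. Write
\[
F_h(\s,D,\xi)=-\l\int_0^\b\el L\s(\cdot,t),\s(\cdot,t)\er\,dt
-\frac{1}{\d}\sum_{x\in\L}\sum_{j=1}^{|D_x|}h'(t_j^x)(-1)^{\xi_x+j},
\]
so that $Z(h)=E[\exp F_h]$.

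First I verify pointwise convergence. The coupling part of $F_{h_n}$ does not depend on $h$. The $h$-dependent part satisfies
\[
\Big|\sum_{x\in\L}\sum_{j=1}^{|D_x|}\big(h_n'(t_j^x)-h'(t_j^x)\big)(-1)^{\xi_x+j}\Big|
\leq |D|\cdot\|h_n'-h'\|_\oo,
\]
which tends to $0$ for every outcome with $|D|<\oo$, i.e.\ $E$-almost surely. Hence $\exp F_{h_n}\to\exp F_h$ pointwise $E$-a.s.

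Next I exhibit an $E$-integrable dominator. Since $\s(x,t)\in\{-1,+1\}$, identity~\eqref{L-sum_eq} gives $|\el L\s(\cdot,t),\s(\cdot,t)\er|\leq 2d|\L|$ for all $t$, so the coupling term is bounded in absolute value by $2\l\b d|\L|$. Combining with the bound above on the $h$-term,
\[
\exp F_{h_n}\leq \exp\!\big(2\l\b d|\L|\big)\cdot\exp\!\big(M|D|/\d\big).
\]
Under $E$, the variables $(|D_x|)_{x\in\L}$ are independent Poisson($\d$) conditioned to be even, and so have all exponential moments; hence $|D|=\sum_x|D_x|$ does too, and the right-hand side is $E$-integrable.

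Applying dominated convergence yields $Z(h_n)=E[\exp F_{h_n}]\to E[\exp F_h]=Z(h)$, which is the desired continuity.

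The only mildly delicate point is the integrability of the dominator, which would fail under a genuinely unbounded Poisson intensity; but since $\b$ and $\L$ are finite and fixed throughout this section, the conditioned-Poisson distribution of each $|D_x|$ has finite moment generating function on all of $\RR$, so the argument goes through without further work.
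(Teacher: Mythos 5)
Your argument is correct and fills in precisely what the paper leaves unstated: the paper disposes of this lemma with the remark that it is ``immediate from the definition~\eqref{pf},'' so there is no written proof to compare against, and your dominated-convergence argument is the natural way to make the claim explicit. Two small points. First, the conditioned-Poisson variables $|D_x|$ are Poisson$(\d\b)$ (not Poisson$(\d)$) conditioned to be even; this does not affect the conclusion about finite exponential moments. Second, since $\el L\s(\cdot,t),\s(\cdot,t)\er\geq 0$ by~\eqref{L-sum_eq}, the coupling contribution to the exponent is $\leq 0$, so you could drop the $\exp(2\l\b d|\L|)$ factor from the dominator; and by bounding $|\exp F_h-\exp F_g|\leq |F_h-F_g|\exp\max(F_h,F_g)$ together with $|F_h-F_g|\leq\frac{|D|}{\d}\|h'-g'\|_\oo$ you would get local Lipschitz continuity of $Z(\cdot)$ in $\|\cdot\|'$, which is slightly stronger than the stated continuity and avoids any appeal to subsequences. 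But as written the proof is sound and suffices for the application in the proof of~\eqref{gd_lem}.
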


We can now complete the proof of~\eqref{gd_lem}, and
hence Lemma~\ref{temporal_gd_lem}.

\begin{proof}[Proof of~\eqref{gd_lem}]
Let $(h_n:n\geq0)$ be the sequence produced
by Lemma~\ref{snippet_lem}.  Note that $h_n$ is
a level $n$ snippet of $h$ itself.  In particular
$\|h'_n\|_\oo\leq\|h'\|_\oo=:M$ for all $n\geq1$.
By symmetry we may assume that 
$h_n(t)=h(k_n 2^{-n}\b+t)$ for all 
$0\leq t<2^{-n}\b$ and some $k_n\in\{0,1,\dotsc,2^n-1\}$;
that is, $a=0$ in Definition~\ref{snippet_def}. 
Moreover, since $h_{n+1}$ is a level $n$ snippet of 
$h_n$ we may also assume that $k_{n+1}\in\{2k_n,2k_n+1\}$
for all $n$;  that is, the restriction of
$h_{n+1}$ to $[0,2^{-(n+1)}\b)$ equals the restriction
of $h_n$ to either $[0,2^{-(n+1)}\b)$
or to $[2^{-(n+1)}\b,2^{-n}\b)$.

Let $t_n=(k_n+1/2)2^{-n}\b$ be the midpoint.  
Clearly the sequence
$t_n$ is convergent, with limit $t_*$ say.
Let $r_n=h_n'(2^{-(n+1)}\b)=h'(t_n)$.
By continuity of $h'$ we have that 
$r_n\rightarrow r:=h'(t_*)\in[-M,M]$.
In fact, since $\Ob$ is compact, $h'$ is uniformly
continuous.  So for $\eps>0$ given we have for
large enough $n$ that $|h'(s)-h'(t)|<\eps$
whenever $|s-t|\leq2^{-n}\b$.  This implies that 
$\|h_n'-W'_{r_n,n}\|_\oo<\eps$ for large enough $n$.
Since $\|W'_{r,n}-W'_{r_n,n}\|_\oo\rightarrow0$
as $n\rightarrow\oo$ it follows from
Lemma~\ref{cty_lem} that for any $\eps'>0$
we have
\[
|Z(h_n)-Z(W_{r,n})|\leq
|Z(h_n)-Z(W_{r_n,n})|+|Z(W_{r_n,n})-Z(W_{r,n})|<\eps'
\]
whenever $n$ is large enough.
Hence by Lemma~\ref{snippet_lem},
\[
Z(h)\leq Z(h_n)< Z(W_{r,n})+\eps'
\]
for $n$ large enough.  Since $\eps'>0$ was arbitrary 
it follows from Lemma~\ref{white_lem}
that $Z(h)\leq\z(r) Z(0)$.  Since
$\z$ is an even function we may assume that $r\geq0$.
\end{proof}

%%%%%%%%%%%%%%%%%%%%%%%%%%%%%%%%%%%%%%%%%%%%%%%%%%%%%%%%%%%%%%%%%%%%%%

\section{Mean-field behaviour of the susceptibility}
\label{mf_sec}

The main objective of this section is to prove Theorem~\ref{mf_thm},
giving the critical exponent value $\g=1$.
The arguments in this section are inspired by similar
arguments for the classical Ising model in~\cite{aiz82,af,ag}.
Apart from Theorem~\ref{irb_thm}, the main component in the proof is a
pair of new differential inequalities for the susceptibility.  The
proof of these inequalities uses the random-parity representation
of~\cite{bjogr}, which we briefly describe next 
(see also~\cite{crawford_ioffe} for the closely related
random-current representation).

\subsection{The random-parity representation}

Throughout this subsection and the next, $\L$ and $\b$ will be fixed 
and finite.
Write $\L\times\Ob=K$.  The random-parity 
representation allows one to write, 
for each finite set $A\se K$,
\begin{equation}\label{rp1}
\mu^\b_\L\Big(\prod_{(x,t)\in A}\s(x,t)\Big)=
\frac{E(\partial\psi^A)}{E(\partial\psi^\es)},
\end{equation}
where $\psi^A$ is a certain random
labelling of $K$ using the labels `even' and
`odd' with `source set' $A$, and $\partial\psi^A$ is a positive weight
associated with the labelling.  The $\psi^A$ are constructed using
Poisson processes, but the measure $E$ is \emph{not} the same
as in Section~\ref{gr_ssec}.  
Throughout Section~\ref{mf_sec} we will use $E$ for the random-parity
measure only.
 Elements of $K$ will simply
be denoted by $x,y,a,b,\ldots$, and we let
\begin{equation*}
F=\{xy: x=(u,t)\in K, y=(v,t)\in K
\mbox{ for some } u\sim v\in\L\}
\end{equation*}
be the set of unordered pairs of `adjacent' elements of $K$.
The point $(0,0)\in K$ will be denoted by $0$.

The labelling $\psi^A$ is constructed using a Poisson process $S$ on
$F$, of intensity $\l$.  
As one traverses each `circle' $\{u\}\times\Ob$ in $K$, the
label alternates between `even' and `odd' in such a way that the label
always changes at (i) points $x\in A$, and (ii) points $x$ and $y$
such that $xy\in S$.  Moreover, these are the only types of points
where the label is allowed to change (this imposes constraints on
$S$).  See Figure~\ref{rpr_fig} for an illustration of such labellings.
\begin{figure}[hbtp]
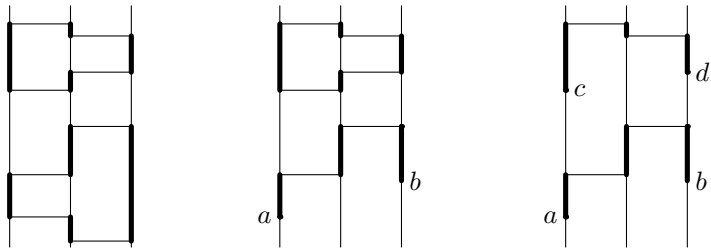

%\centering
\includegraphics{irb.12}
\hspace{1.3cm} 
\includegraphics{irb.13} 
\hspace{1.3cm} 
\includegraphics{irb.14} 
\caption[Colourings in the random-parity representation]
{Labellings $\psi^A$ for $A=\es$ (left),
$A=\{a,b\}$ (middle) and $A=\{a,b,c,d\}$ (right).
Horizontal line segment represent elements of $S$.
Thick vertical
segments are `odd' and thin segments `even'.}
\label{rpr_fig}
\end{figure}
One may see that in such a labelling $\psi^A$ the `odd' subset of $K$
forms a collection of geometric `paths' between elements of $A$,
together with a collection of `loops'.  A labelling as described above
can only be defined if $A$ has even size;  if $A$ has odd size we define
the weight $\partial\psi^A=0$ (this is consistent with~\eqref{rp1}).

Although there is a natural notion of connectivity along `odd' paths,
it turns out to be much more fruitful to consider a more complex
notion of connectivity in
\emph{triples} $(\psi_1^A,\psi_2^B,\D)$.  Here $\psi_1^A$ and $\psi_2^B$ are
independent labellings and $\D$ is an independent 
Poisson process of `cuts' of intensity $4\d$.
This is described in detail in~\cite{bjogr};  here is a brief account.
Let $S_1$ and $S_2$ denote the (independent) Poisson processes on $F$
used to construct $\psi_1^A$ and $\psi_2^B$, respectively.  Elements 
$xy\in S_1\cup S_2$ may be interpreted as `bridges' which connect the
points $x$ and $y$.  Connections may traverse such bridges, and may
also traverse subintervals of $K$ \emph{except that} connections are
blocked at points $x\in\D$ such that both  $\psi_1^A$ and $\psi_2^B$
are `even' at $x$ (`odd' labels thus `cancel' $\D$).
For $a,b\in K$ we write $\{a\lra b\}$ for the event that
$a$ and $b$ are connected in the triple $(\psi_1^A,\psi_2^B,\D)$.

There are two main technical tools in the random-parity
representation. 
\begin{enumerate}
\item
\emph{The switching lemma}~\cite[Theorem~4.2]{bjogr}
implies that for any $a,b\in K$ and any two finite sets 
$A,B\se K$,
\[
E(\partial\psi_1^A\partial\psi_2^B\one\{a\lra b\})=
E(\partial\psi_1^{A\sd ab}\partial\psi_2^{B\sd ab}\one\{a\lra b\}).
\]
Here $A\sd ab$ is short-hand for the set-theoretic symmetric difference
$A\sd\{a,b\}$.  
The main manifestation of the switching lemma is the identity
\begin{equation*}%\label{sw_eq}
E(\partial\psi_1^A\partial\psi_2^{ab})=
E(\partial\psi_1^{A\sd ab}\partial\psi_2^\es\one\{a\lra b\}).
\end{equation*}
(The connection $a\lra b$ is automatic in the left-hand-side since
there is an odd path in $\psi_2^{ab}$.)
\item\emph{Conditioning on clusters.}  It is often useful to consider
  how a triple $(\psi_1^A,\psi_2^B,\D)$ interacts with a third
(independent) labelling $\psi_3^C$.  One may then condition on the set
$C_{1,2}(x)$ of points connected to some given point $x\in K$ in 
$(\psi_1^A,\psi_2^B,\D)$, and consider the restrictions of 
$\psi_1^A$, $\psi_2^B$ and $\psi_3^C$ to $C_{1,2}(x)$ and
$K\sm C_{1,2}(x)$ `separately'.  The details of this procedure are
technical and depend on the precise situation in which it is to be
used (care must be taken to get the correct `sources' in the
restricted labellings).  Rather than attempting to describe 
the details we point 
to~\cite[Lemma~4.6 and (5.6)--(5.8)]{bjogr}, 
where applications of this method are described in detail.
\end{enumerate}

For simplicity of notation we will, in this subsection and the next, 
write $\el\s_A\er$ for the quantity in~\eqref{rp1},
and will write $Z=E(\partial\psi^\es)$.
We write $\el\s_A;\s_B\er$ for
$\el\s_A\s_B\er-\el\s_B\er\el\s_A\er=\el\s_{A\sd B}\er-\el\s_B\er\el\s_A\er$.

Here is an example of the random-parity representation in 
action.  Let
\begin{equation*}
\chi_\L=\chi_\L(\d,\l,\b):=
\sum_{x\in\L}\int_0^\b \mu^\b_\L(\s(0,0)\s(x,t))\,dt
\end{equation*}
denote the finite-volume, positive-temperature approximation
of the susceptibility~\eqref{sus_eq}.  We see (using the
expression in Definition~\ref{st_def}) that
\begin{equation}\label{l1}
\frac{\partial\chi_\L}{\partial\l}=\int_Kdx\int_Fd(yz)
\el\s_0\s_x;\s_y\s_z\er.
\end{equation}
Using the switching lemma,
\begin{equation}\label{rp2}
\begin{split}
\el\s_0\s_x;\s_y\s_z\er&=
\frac{1}{Z}E(\partial\psi_1^{0xyz})-
\frac{1}{Z^2}E(\partial\psi_1^{0x}\partial\psi_2^{yz})\\
&=\frac{1}{Z^2}
E(\partial\psi_1^{0xyz}\partial\psi_2^\es)-
\frac{1}{Z^2}
E(\partial\psi_1^{0xyz}\partial\psi_2^\es\one\{y\lra z\})\\
&=\frac{1}{Z^2}
E(\partial\psi_1^{0xyz}\partial\psi_2^\es\one\{y\not\lra z\}).
%&=\frac{1}{Z^2} E(\partial\psi_1^{0y}\partial\psi_2^{xz}\one\{y\not\lra z\})
%+\frac{1}{Z^2} E(\partial\psi_1^{0z}\partial\psi_2^{xy}\one\{y\not\lra z\})\\
%&\leq \el\s_0\s_y\er\el\s_x\s_z\er+ \el\s_0\s_z\er\el\s_x\s_y\er.
\end{split}
\end{equation}
In particular $\frac{\partial\chi_\L}{\partial\l}\geq0$.

\subsection{Differential inequalities}

Let
\begin{equation*}%\label{bub_eq}
B_\L=B_\L(\d,\l,\b):=
\sum_{x\in\L}\int_0^\b\mu^\b_\L(\s(0,0)\s(x,t))^2\,dt
\end{equation*}
be the finite-volume, positive-temperature approximation of 
the bubble-diagram~\eqref{b2}.
In addition to Theorem~\ref{irb_thm}, the main step in proving
Theorem~\ref{mf_thm} is to establish the following two differential
inequalities:
\begin{lemma}\label{diff_lem}
We have that
\begin{equation}\label{di1}
4d\chi_\L^2\geq\frac{\partial\chi_\L}{\partial\l}\geq
4d\chi_\L^2-4dB_\L\chi_\L-2d\l B_\L \frac{\partial\chi_\L}{\partial\l}
-8d\d B_\L \Big(-\frac{\partial\chi_\L}{\partial\d}\Big)
\end{equation}
and
\begin{equation}\label{di2}
2\chi_\L^2\geq-\frac{\partial\chi_\L}{\partial\d}\geq
2\chi_\L^2-2B_\L\chi_\L-\l B_\L \frac{\partial\chi_\L}{\partial\l}
-4\d B_\L \Big(-\frac{\partial\chi_\L}{\partial\d}\Big).
\end{equation}
\end{lemma}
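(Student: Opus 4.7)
The strategy is to derive a single \emph{identity} for the truncated four-point function $\el\s_0\s_x;\s_y\s_z\er$ using the switching lemma, then exploit it for both the upper and lower bounds in~\eqref{di1}, and finally repeat the argument for~\eqref{di2}.

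Starting from~\eqref{rp2}, the switching lemma applied in reverse gives
\[
\el\s_0\s_y\er\el\s_x\s_z\er=\tfrac{1}{Z^2}E\bigl(\partial\psi_1^{0xyz}\partial\psi_2^\es\one\{x\lra z\}\bigr),
\]
and symmetrically with $y\leftrightarrow z$. On the support of $\partial\psi_1^{0xyz}$ the sources $\{0,x,y,z\}$ are distributed among the clusters of the triple $(\psi_1^{0xyz},\psi_2^\es,\D)$ either as a single cluster of all four, or into two pairs (realizing one of three pairings). A short case analysis then yields, on this support,
\[
\one\{x\lra y\}+\one\{x\lra z\}=\one\{y\nlra z\}+2\,\one\{0,x,y,z\text{ in one cluster}\},
\]
so combining with~\eqref{rp2} gives the identity
\[
\el\s_0\s_x;\s_y\s_z\er=\el\s_0\s_y\er\el\s_x\s_z\er+\el\s_0\s_z\er\el\s_x\s_y\er-2R(0,x;y,z),
\]
where $R(0,x;y,z):=\tfrac{1}{Z^2}E\bigl(\partial\psi_1^{0xyz}\partial\psi_2^\es\one\{0,x,y,z\text{ in one cluster}\}\bigr)\ge 0$. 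Dropping the non-negative term $R$, integrating as in~\eqref{l1}, and using translation invariance of $\mu^\b_\L$ together with the fact that each vertex of $\L$ has $2d$ neighbours, gives the upper bound in~\eqref{di1}.

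The lower bound amounts to controlling $\int_K dx\int_F d(yz)\,R(0,x;y,z)$ by the three correction terms on the right-hand side of~\eqref{di1}. The method is \emph{conditioning on clusters} as in~\cite[Lemma~4.6, (5.6)--(5.8)]{bjogr}: on the event that $\{0,x,y,z\}$ lie in one cluster $C$ of the triple, isolate a pivotal element merging $\{0,x\}$ with $\{y,z\}$ inside $C$. Three cases arise: (i) the pivotal element is a bridge of $S_1\cup S_2$, whose intensity $\l$ and a subsequent application of the switching lemma produce a term proportional to $\l B_\L\,\partial\chi_\L/\partial\l$; (ii) it is a point of the cut process $\D$ at which both labellings are even (intensity $4\d$), producing a term proportional to $\d B_\L(-\partial\chi_\L/\partial\d)$; (iii) no pivotal element is needed and the merger is encoded directly in the source structure, producing $B_\L\chi_\L$. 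In each case, restricting the labellings to $C$ and to $\L\sm C$ and applying the switching lemma factorizes the contribution into products of two-point functions; integrating over the position of the pivotal point inside $C$ generates the bubble $B_\L=\int_K dy\,\el\s_0\s_y\er^2$, and collecting the three contributions with their combinatorial prefactors yields~\eqref{di1}.

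Inequality~\eqref{di2} is proved by the same scheme. The role of~\eqref{l1} is played by the analogous expression for $-\partial\chi_\L/\partial\d$, obtained by differentiating the path-integral representation of $\mu^\b_\L$ in the intensity $\d$ of the Poisson process $D$ (with appropriate care for the even-size conditioning on each $D_x$); one obtains that $-\partial\chi_\L/\partial\d$ equals an integral over $w\in K$ of a truncated four-point function with a distinguished `flip' at $w$. The same case analysis then produces the prefactors $2$, $\l$, $4\d$ of~\eqref{di2} in place of the prefactors $4d$, $2d\l$, $8d\d$ of~\eqref{di1}, the additional factor $2d$ in the $\l$-inequality being the contribution from the nearest-neighbour edge sum absent here. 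The main obstacle throughout is the conditioning-on-clusters bookkeeping: the restricted labellings on $C$ and $\L\sm C$ must inherit the correct source sets for the switching lemma to apply, and the three pivotal-element cases must be identified and assembled so as to match the right-hand sides of~\eqref{di1} and~\eqref{di2} with the precise prefactors stated.
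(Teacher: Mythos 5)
Your derivation of the upper bound in~\eqref{di1} is correct and amounts to the same thing as the paper's: on the support of $\partial\psi_1^{0xyz}\partial\psi_2^\es$ the four sources split into two pairs or into one joint cluster, your indicator identity $\one\{x\lra y\}+\one\{x\lra z\}=\one\{y\nlra z\}+2\,\one\{\text{one cluster}\}$ checks case by case, and combining it with~\eqref{rp2} and the switching lemma gives $\el\s_0\s_x;\s_y\s_z\er=\el\s_0\s_y\er\el\s_x\s_z\er+\el\s_0\s_z\er\el\s_x\s_y\er-2R$, which is~\eqref{di3} with $U_4=-2R$.

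The lower bound plan has a genuine gap. You propose to ``isolate a pivotal element merging $\{0,x\}$ with $\{y,z\}$ inside $C$''. Such an element need not exist: the common cluster can contain several internally disjoint paths joining $\{0,x\}$ to $\{y,z\}$, and then no single bridge or cut is pivotal, so the decomposition you describe never gets started. The paper's device is different and more robust. After switching, one introduces a \emph{third} independent labelling $\psi_3^{cd}$ and its backbone $\xi_3^{cd}$, conditions on the cluster $C_{1,2}(a)$ of $a$ in the triple $(\psi_1,\psi_2,\D)$, and classifies according to the \emph{first point $u$ at which the backbone meets $C_{1,2}(a)$}. This first-entry point is always well defined whenever the backbone meets the cluster, and it produces exactly the three cases you were reaching for --- $c$ already in $C_{1,2}(a)$, $u$ the endpoint of a bridge (intensity $\l$), or $u$ a cut of $\D$ on the boundary (intensity $4\d$). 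You also omit the symmetrization over the transposition $b\leftrightarrow c$, which is needed to pass from the asymmetric intermediate bound~\eqref{u1} to the symmetric coefficients $4d$, $2d\l$, $8d\d$.

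For~\eqref{di2} the starting formula you describe is wrong. The derivative $-\partial\chi_\L/\partial\d$ is not an integral of a truncated four-point function; by~\cite[Theorem~4.10]{bjogr} it equals
\[
2\int_Kdx\int_Kdy\,\frac{1}{Z^2}
E\bigl(\partial\psi_1^{0x}\partial\psi_2^\es\,\one\{0\overset{y}{\lra}x\}\bigr),
\]
a two-point weight against a ``must pass through $y$'' constraint. The upper bound $2\chi_\L^2$ then drops out in one line by switching and relaxing the constraint, and the lower bound proceeds by the same third-labelling/backbone scheme starting from this two-point formula, not from a four-point Ursell function.
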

These inequalities are analogous to inequalities
for the classical Ising model in~\cite{aiz82}, 
and the proof follows a similar outline.
We recall from~\eqref{rp2} that
$\frac{\partial\chi_\L}{\partial\l}\geq 0$, and remark that
$\frac{\partial\chi_\L}{\partial\d}\leq 0$ (see~\eqref{rp3} below).

\begin{proof}
We start with~\eqref{di1}.  From~\eqref{l1} we have that 
\begin{equation}\label{di3}
\begin{split}
\frac{\partial\chi_\L}{\partial\l}&=\int_Kdx\int_Fd(yz)
[\el\s_0\s_y\er\el\s_x\s_z\er+\el\s_0\s_z\er\el\s_x\s_y\er+
U_4(0,x,y,z)]\\
&=4d\chi_\L^2+\int_Kdx\int_Fd(yz) U_4(0,x,y,z),
\end{split}
\end{equation}
where 
\begin{equation*}
U_4(a,b,c,d)=\el\s_a\s_b\s_c\s_d\er
-\el\s_a\s_b\er\el\s_c\s_d\er
-\el\s_a\s_c\er\el\s_b\s_d\er
-\el\s_a\s_d\er\el\s_b\s_c\er
\end{equation*}
is sometimes called the `fourth Ursell function'.  
Note that $U_4$ is symmetric in its four arguments.  We have that
$U_4\leq0$.  In fact
\begin{multline*}
E(\partial\psi_1^{abcd}\partial\psi_2^\es)
-E(\partial\psi_1^{ab}\partial\psi_2^{cd})
-E(\partial\psi_1^{ac}\partial\psi_2^{bd})
-E(\partial\psi_1^{ad}\partial\psi_2^{bc})\\
=E(\partial\psi_1^{abcd}\partial\psi_2^\es
[1-\one\{c\lra d\}-\one\{b\lra d\}
-\one\{b\lra c\}]),
\end{multline*}
and the quantity in square
brackets is either 0 or $-2$, the latter occurring
if and only if all four points $a,b,c,d$ are connected.  Applying the
switching lemma we arrive at the identity
\begin{equation*}
U_4(a,b,c,d)=-2\frac{1}{Z^2}E(\partial\psi_1^{ab}\partial\psi_2^{cd}
\one\{a\lra c\}).
\end{equation*}
The upper bound in~\eqref{di1} follows.

The lower bound in~\eqref{di1} will be obtained by
bounding
\[
\frac{1}{Z^2}E(\partial\psi_1^{ab}\partial\psi_2^{cd}\one\{a\lra c\})
=\frac{1}{2}|U_4(a,b,c,d)|
\]
from above and using~\eqref{di3}.  Let $\psi_3^{cd}$
be an independent labelling.  In the configuration
$\psi_3^{cd}$ there is an
odd path $\xi_3^{cd}$ from $c$ to $d$, which is called the
`backbone'  of the configuration (see~\cite[Section~3.3]{bjogr}).
Let $C_{1,2}(a)$ denote the connected cluster of $a$ in the triple 
$(\psi_1,\psi_2,\D)$.  (All connectivites in this proof will
refer to this triple.)
Conditioning on the cluster $C_{1,2}(a)$ 
as in~\cite[(5.6)--(5.8)]{bjogr} we find that
\begin{multline}\label{bb0}
E(\partial\psi_1^{ab}\partial\psi_2^\es\partial\psi_3^{cd}
\one\{\xi_3^{cd}\cap C_{1,2}(a)=\es\})\\
\leq Z E(\partial\psi_1^{ab}\partial\psi_2^\es
\el\s_c\s_d\er_{K\sm C_{1,2}(a)}),
\end{multline}
where $\el\s_c\s_d\er_{K\sm C_{1,2}(a)}$
denotes the correlation~\eqref{rp1} in the
smaller region $K\sm C_{1,2}(a)$.
Also as in~\cite{bjogr}, we further find that
\[
E(\partial\psi_1^{ab}\partial\psi_2^\es
\el\s_c\s_d\er_{K\sm C_{1,2}(a)})
=E(\partial\psi_1^{ab}\partial\psi_2^{cd}
\one\{a\not\lra c\}).
\]
Thus 
\begin{equation}\label{bb1}
Z E(\partial\psi_1^{ab}\partial\psi_2^{cd}
\one\{a\lra c\})\leq
E(\partial\psi_1^{ab}\partial\psi_2^\es\partial\psi_3^{cd}
\one\{\xi_3^{cd}\cap C_{1,2}(a)\neq\es\}).
\end{equation}

The rest of the proof of the lower bound in~\eqref{di1} will be based
on bounding the right-hand side of~\eqref{bb1}.
There are two main cases to consider in~\eqref{bb1},
namely whether or not $c\in C_{1,2}(a)$.  In case $c\in C_{1,2}(a)$
we get
\begin{equation*}
\begin{split}
E(\partial\psi_1^{ab}\partial\psi_2^\es\partial\psi_3^{cd}
\one\{c\in C_{1,2}(a)\})&=
Z\el\s_c\s_d\er E(\partial\psi_1^{ab}\partial\psi_2^\es
\one\{a\lra c\})\\
&=Z^3\el\s_c\s_d\er\el\s_b\s_c\er\el\s_a\s_c\er.
\end{split}
\end{equation*}

The case $c\not\in C_{1,2}(a)$ splits into two subcases, because the
first point $u$ on $\xi_3^{cd}$ in $C_{1,2}(a)$ 
is then either (i) an endpoint of a bridge
on $\xi_3^{cd}$ whose other endpoint $v$ is not in $C_{1,2}(a)$,
or (ii) a point of $\D$ on the boundary of $C_{1,2}(a)$.  
See Figure~\ref{backbone_fig}.
\begin{figure}[hbt]
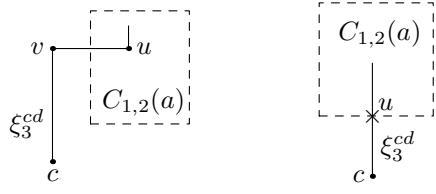

\centering
\includegraphics{irb.10}\qquad\qquad
\includegraphics{irb.11}
\caption{Cases for the first point $u$ on
$\xi_3^{cd}$ in $C_{1,2}(a)$.  Left: (i) $u$ is the endpoint
of a bridge.  Right: (ii) $u$ is at a `cut' (point of $\D$).}
\label{backbone_fig}   
\end{figure}

Let us first
consider the case when $u$ is the endpoint of a bridge.  Then
$\xi_3^{cd}$ decomposes as $\zeta\circ\zeta'$, where
$\zeta:c\rightarrow v$ and $\zeta':u\rightarrow d$.  Moreover,
$\zeta\cap C_{1,2}(a)=\es$.  We may therefore argue as
in~\eqref{bb0}--\eqref{bb1} for $\zeta$.
Using also~\cite[Lemma~3.3]{bjogr} and the
{\gks}-inequality (proved for the present model 
in~\cite[Lemma~2.2.20]{bjo_phd}),
we may therefore in this case
bound the right-hand-side of~\eqref{bb1} above by
\begin{equation}\label{bb2}
\begin{split}
&\l\int_Fd(uv)\el\s_u\s_d\er E(\partial\psi_1^{ab}
\partial\psi_2^\es\el\s_c\s_v\er_{K\sm C_{1,2}(a)}
\one\{u\in C_{1,2}(a)\})\\
&=\l Z\int_Fd(uv) \el\s_u\s_d\er
E(\partial\psi_1^{ab}\partial\psi_2^{cv}
\one\{c\not\in C_{1,2}(a), u\in C_{1,2}(a)\}).
\end{split}
\end{equation}
By the switching lemma, the latter expectation equals
\begin{equation*}
E(\partial\psi_1^{bu}\partial\psi_2^{aucv}
\one\{c\not\in C_{1,2}(a), u\in C_{1,2}(a)\}).
\end{equation*}
By conditioning on the cluster $C_{1,2}(a)$ and using the
{\gks}-inequality, this is at most
$\el\s_a\s_u\er E(\partial\psi_1^{bu}\psi_2^{cv}
\one\{u\not\lra v\})$.

Let us now consider the case when the first point
$u$ on $\xi_3^{cd}$ in $C_{1,2}(a)$ is a point of $\D$ on the
boundary of $C_{1,2}(a)$.  Then $\xi_3^{cd}$ decomposes as
$\zeta\circ\zeta'$, where $\zeta:c\rightarrow u$, 
$\zeta':u\rightarrow d$ and $\zeta\cap C_{1,2}(a)=\{u\}$.  If $u$ is
removed from $\D$ then $C_{1,2}(a)$ is enlarged, and what was
previously $C_{1,2}(a)$ becomes $C_{1,2}^u(a)$, the set of points
which can be reached from $a$ \emph{without passing $u$}.  We may
argue as for~\eqref{bb1} and~\eqref{bb2} again to see that
the right-hand-side of~\eqref{bb1} is 
in this case at most
\begin{equation*}
\begin{split}
&4\d\int_Kdu \el\s_u\s_d\er
E(\partial\psi_1^{ab}\partial\psi_2^\es
\el\s_c\s_u\er_{K\sm C_{1,2}^u(a)}
\one\{b\in C_{1,2}^u(a),  u\in C_{1,2}(a)\})\\
&=4\d Z
\int_Kdu \el\s_u\s_d\er E(\partial\psi_1^{ab}\partial\psi_2^{cu}
\one\{c\not\in C^u_{1,2}(a), b\in C_{1,2}^u(a), u\in C_{1,2}(a)\}).
\end{split}
\end{equation*}
By the switching lemma,
the latter expectation is at most
\begin{equation*}
\begin{split}
E(\partial\psi_1^{bu}\psi_2^{ac}
\one\{c\not\in C^u_{1,2}(a), b\in C_{1,2}^u(a)\})&=
E(\partial\psi_1^{bc}\psi_2^{au}
\one\{c\not\in C^u_{1,2}(a), b\in C_{1,2}^u(a)\})\\
&\leq\el\s_a\s_u\er E(\partial\psi_1^{bc}\partial\psi_2^\es
\one\{b\overset{u}{\lra} c\}),
\end{split}
\end{equation*}
where we have conditioned on the cluster $C_{1,2}^u(a)$ and used the
\gks-inequality for the upper bound. 

So far we have established that
\begin{equation}\label{u1}
\begin{split}
(0\leq) -\frac{1}{2}U_4(a,b,c,d)\leq& 
\el\s_a\s_c\er \el\s_b\s_c\er \el\s_c\s_d\er\\
&+\l\int_Fd(uv) \el\s_a\s_u\er \el\s_d\s_u\er
E(\partial\psi_1^{bu}\partial\psi_2^{cv} \one\{u\not\lra v\})\\
&+4\d\int_Kdu \el\s_a\s_u\er \el\s_d\s_u\er
E(\partial\psi_1^{bc}\partial\psi_2^\es
\one\{b\overset{u}{\lra} c\}).
\end{split}
\end{equation}
Whereas $U_4(a,b,c,d)$ is symmetric in
$a$, $b$, $c$, $d$ the right-hand-side 
of~\eqref{u1} is
not.  Averaging with respect to the transposition $b\lra c$ we
arrive at the upper bound
\begin{multline}
-U_4(a,b,c,d)\leq
\el\s_a\s_c\er \el\s_b\s_c\er \el\s_c\s_d\er+
\el\s_a\s_b\er \el\s_b\s_c\er \el\s_b\s_d\er\\
+\l\int_Fd(uv) \el\s_a\s_u\er \el\s_d\s_u\er
\big[ E(\partial\psi_1^{bu}\partial\psi_2^{cv} \one\{u\not\lra v\})
+E(\partial\psi_1^{cu}\partial\psi_2^{bv} \one\{u\not\lra v\})\big]\\
+8\d\int_Kdu \el\s_a\s_u\er \el\s_d\s_u\er
E(\partial\psi_1^{bc}\partial\psi_2^\es
\one\{b\overset{u}{\lra} c\}).
\end{multline}
Thus, setting $a=y,b=x,c=0,d=z$, 
it follows that the quantity
\begin{equation*}
-\int_Kdx\int_Fd(yz) U_4(0,x,y,z)
\end{equation*}
which appears in~\eqref{di3} is at most
\begin{multline}\label{l5}
\chi_\L\int_Fd(yz) \el\s_0\s_y\er\el\s_0\s_z\er
+\int_Kdx\el\s_0\s_x\er\int_Fd(yz) \el\s_x\s_y\er\el\s_x\s_z\er\\
+\l\int_Kdx\int_Fd(uv) \big[ E(\partial\psi_1^{xu}
\partial\psi_2^{0v} \one\{u\not\lra v\})
+E(\partial\psi_1^{0u}\partial\psi_2^{xv} \one\{u\not\lra v\})\big]
\int_Fd(yz)\el\s_y\s_u\er \el\s_z\s_u\er\\
+8\d\int_Kdx\int_Kdu E(\partial\psi_1^{0x}\partial\psi_2^\es
\one\{0\overset{u}{\lra} x\})\int_Fd(yz)
\el\s_y\s_u\er \el\s_z\s_u\er.
\end{multline}
The Cauchy--Schwarz inequality 
implies that
\begin{equation*}
\int_Fd(yz) \el\s_0\s_y\er\el\s_0\s_z\er\leq 2dB_\L;
\end{equation*}
using this together with translation invariance 
and~\eqref{rp2} shows that the
quantity in~\eqref{l5} is at most
\begin{equation*}
4d\chi_\L B_\L+2d\l B_\L\frac{\partial\chi_\L}{\partial\l}
+8d\d B_\L\Big(-\frac{\partial\chi_\L}{\partial\d}\Big).
\end{equation*}
Together with~\eqref{di3}, this proves the lower bound in~\eqref{di1}.

For the upper bound in~\eqref{di2} we note that
\begin{equation}\label{rp3}
-\frac{\partial\chi_\L}{\partial\d}=
2\int_Kdx\int_Kdy\frac{1}{Z^2}
E(\partial\psi_1^{0x}\partial\psi_2^\es\one\{0\overset{y}{\lra}x\})
\end{equation}
where the notation $0\overset{y}{\lra}x$ signifies that \emph{if}
a path connects $0$ and $x$, then it must contain $y$
(see~\cite[Theorem~4.10]{bjogr}).  By the
switching lemma
\begin{equation*}
\frac{1}{Z^2}E(\partial\psi_1^{0x}\partial\psi_2^\es\one\{0\overset{y}{\lra}x\})
=\frac{1}{Z^2}E(\partial\psi_1^{0y}\partial\psi_2^{yx}\one\{0\overset{y}{\lra}x\})
\leq\el\s_0\s_y\er\el\s_y\s_x\er.
\end{equation*}
Together with~\eqref{rp3} and translation invariance,
this proves the upper bound in~\eqref{di2}.

The lower bound in~\eqref{di2} is similar in spirit
to the lower bound  in~\eqref{di1}, but differs in
the details.  We start by recalling that
\begin{equation}\label{de1}
\begin{split}
-\frac{\partial\chi_\L}{\partial\d}&=2\int_Kdx\int_Kdy
\frac{1}{Z^2}E(\partial\psi_1^{0x}\partial\psi_2^\es
\one\{0\overset{y}{\lra}x\})\\
&=2\chi_\L^2-\frac{2}{Z^2}\int_Kdx\int_Kdy
E(\partial\psi_1^{0x}\partial\psi_2^\es
\one\{0\overset{y}{\lra}x\}^c).
\end{split}
\end{equation}
In words, the event $\{0\overset{y}{\lra}x\}^c$ is that there is some
path from $0$ to $x$ which avoids $y$.  By the switching lemma and the
method for~\eqref{bb1} we have that 
\begin{equation*}
\begin{split}
\frac{1}{Z^3}E(\partial\psi_1^{0y}&\partial\psi_2^\es
\partial\psi_3^{xy}\one\{\xi_3^{xy}\cap C_{1,2}^y(0)=\{y\}\})\\
&\leq
\frac{1}{Z^2}E(\partial\psi_1^{0y}\partial\psi_2^\es
\el\s_x\s_y\er_{K\sm C_{1,2}^y(0)})\\
&=\frac{1}{Z^2}E(\partial\psi_1^{0y}\partial\psi_2^{xy}
\one\{0\overset{y}{\lra}x\})
=\frac{1}{Z^2}E(\partial\psi_1^{0x}\partial\psi_2^\es
\one\{0\overset{y}{\lra}x\}).
\end{split}
\end{equation*}
Note that certainly $0\lra y$ in $(\psi_1^{0y},\psi_2^\es,\D)$, since
$\psi_1$ has sources $0,y$;  so in this situation the complement of
the event $\{\xi_3^{xy}\cap C_{1,2}^y(0)=\{y\}\}$ is the event 
$\{\xi_3^{xy}\cap C_{1,2}^y(0)\supsetneq\{y\}\}$
that $\{y\}$ is a strict subset of $\xi_3^{xy}\cap C_{1,2}^y(0)$.  
Thus
\begin{equation}\label{l6}
\frac{1}{Z^2}E(\partial\psi_1^{0x}\partial\psi_2^\es
\one\{0\overset{y}{\lra}x\}^c)
\leq\frac{1}{Z^3}E(\partial\psi_1^{0y}\partial\psi_2^\es
\partial\psi_3^{xy}\one\{\xi_3^{xy}\cap C_{1,2}^y(0)
\supsetneq\{y\}\}).
\end{equation}

We consider the cases whether or not $x\in C_{1,2}(0)$ in the
expectation in the right-hand-side of~\eqref{l6};
note that $C^y_{1,2}(0)\se C_{1,2}(0)$.  The case 
$x\in C_{1,2}(0)$ gives at most
\begin{equation*}
E(\partial\psi_1^{0y}\partial\psi_2^\es\partial\psi_3^{xy}
\one\{x\in C_{1,2}(0)\})=Z^3\el\s_x\s_y\er^2\el\s_0\s_x\er.
\end{equation*}
Again, the case $x\not\in C_{1,2}(0)$ decomposes into the subcases
when the first point $u$ on $\xi_3^{xy}$ which lies in $C_{1,2}(0)$ is
(i) the endpoint of a bridge whose other endpoint $v$ does not lie in
$C_{1,2}(0)$, or (ii) a cut in $\D$ on the boundary of $C_{1,2}(0)$.

In case (i), the backbone $\xi_3^{xy}$ decomposes as
$\zeta\circ\zeta'$ where $\zeta:x\rightarrow v$, 
$\zeta':u\rightarrow y$ and $\zeta\cap C_{1,2}(0)=\es$.  
As for~\eqref{bb2} it follows that the expectation in
the right-hand-side of~\eqref{l6} is in this case at most
\begin{multline}
\l Z\int_F d(uv)\el\s_u\s_y\er 
E(\partial\psi_1^{0y}\partial\psi_2^{xv}
\one\{u\lra 0, v\not\lra 0\})\\
=\l Z\int_F d(uv)\el\s_u\s_y\er 
E(\partial\psi_1^{uy}\partial\psi_2^{0uxv}
\one\{u\not\lra v, u\not\lra x\})\\
\leq \l Z\int_F d(uv)\el\s_u\s_y\er^2 
E(\partial\psi_1^{xv}\partial\psi_2^{0u}\one\{u\not\lra v\}).
\end{multline}

In case (ii) the backbone $\xi_3^{xy}$ decomposes as
$\zeta\circ\zeta'$, where $\zeta:x\rightarrow u$,
$\zeta':u\rightarrow y$ and $\zeta\cap C_{1,2}(0)=\{u\}$.
The expectation in
the right-hand-side of~\eqref{l6} is in this case at most
\begin{multline}
4\d Z\int_Kdu\el\s_u\s_y\er E(\partial\psi_1^{0y}\partial\psi_2^{xu}
\one\{0\lra u, 0\overset{u}{\lra} x\})\\
\leq 4\d Z\int_Kdu\el\s_u\s_y\er^2 E(\partial\psi_1^{0x}\partial\psi_2^\es
\one\{0\overset{u}{\lra} x\}).
\end{multline}

So far we have showed that
\begin{multline}\label{l7}
\frac{1}{Z^2}E(\partial\psi_1^{0x}\partial\psi_2^\es
\one\{0\overset{y}{\lra}x\}^c)\\
\leq \el\s_x\s_y\er^2\el\s_0\s_x\er+
\l \int_F d(uv)\el\s_u\s_y\er^2 \frac{1}{Z^2}
E(\partial\psi_1^{xv}\partial\psi_2^{0u}\one\{u\not\lra v\})\\
+4\d \int_Kdu\el\s_u\s_y\er^2 \frac{1}{Z^2}
E(\partial\psi_1^{0x}\partial\psi_2^\es\one\{0\overset{u}{\lra} x\}).
\end{multline}
Whereas the left-hand-side in~\eqref{l7} is symmetric under the
transposition $0\lra x$, the right-hand-side is not.  Averaging with
respect to this transposition we see that we may replace the
right-hand-side in~\eqref{l7} by
\begin{multline}
\frac{1}{2}\big(\el\s_x\s_y\er^2\el\s_0\s_x\er+
\el\s_0\s_y\er^2\el\s_0\s_x\er\big)\\
+\frac{\l}{2} \int_F d(uv)\el\s_u\s_y\er^2 \frac{1}{Z^2}
\big[E(\partial\psi_1^{xv}\partial\psi_2^{0u}\one\{u\not\lra v\})+
E(\partial\psi_1^{0v}\partial\psi_2^{xu}\one\{u\not\lra v\})\big]\\
+4\d \int_Kdu\el\s_u\s_y\er^2 \frac{1}{Z^2}
E(\partial\psi_1^{0x}\partial\psi_2^\es\one\{0\overset{u}{\lra} x\}).
\end{multline}
It follows that the integral 
\begin{equation*}
2\int_Kdx\int_Kdy \frac{1}{Z^2}E(\partial\psi_1^{0x}\partial\psi_2^\es
\one\{0\overset{y}{\lra}x\}^c)
\end{equation*}
which appears in~\eqref{de1} is at most
\begin{equation*}
2 B_\L \chi_\L + \l B_\L\frac{\partial\chi_\L}{\partial\l}
+4\d B_\L\Big(-\frac{\partial\chi_\L}{\partial\d}\Big).
\end{equation*}
This proves the lower bound in~\eqref{di2}.
\end{proof}

\subsection{The bubble-diagram}

This section is devoted to bounds on the
bubble-diagram~\eqref{b2}.
For each $\d\geq0$ and $0<\b\leq\oo$ one may define the
critical value $\l_\crit=\l_\crit(\d,\b)$ by
\[
\begin{split}
\l_\crit(\d,\b)&=
\sup\big\{\l>0:\limsup_{N\rightarrow\oo}\chi_\L(\l,\d,\b)<\oo\big\},
\quad \mbox{if }\b<\oo,\\
\l_\crit(\d,\oo)&=\sup\big\{\l>0:
\limsup_{N,\b\rightarrow\oo}\chi_\L(\l,\d,\b)<\oo\big\}.
\end{split}
\]
This is the the same critical value as referred to in
Section~\ref{intro_sec}, see~\cite[Theorem~1.1]{bjogr}.
In all simultaneous limits $N,\b\rightarrow\oo$ we assume
that the limit is taken so that $\b$ and $N$ are of the 
same order (this is convenient when using the graphical representation
and is related to `van Hove convergence').
%cf.~\cite[Sections~2.5.2 and~3.5]{bjo_phd}
%and~\cite{lebowitz_martin-lof}).
It is well-known that $0<\l_\crit<\oo$
provided either $\b<\oo$ and
$d\geq 2$, or $\b=\oo$ and $d\geq1$.
%eg Thm 2.3.9 in my thesis

For each $0<\b\leq\oo$ and $\l<\l_\crit$ there
is a unique infinite-volume limit measure of $\mu^\b_\L$;
for $\b=\oo$ it is obtained in the simultaneous
limit $N,\b\rightarrow\oo$.  For simplicity we will
denote this limit measure by $\mu$.
From the dominated convergence theorem it follows that,
as $N\rightarrow\oo$ or $N,\b\rightarrow\oo$,
the limits $\chi$ and $B$ of $\chi_\L$ and $B_\L$, respectively,
exist, and that
\[
\chi=\left\{\begin{array}{ll}
\sum_{x\in\ZZ^d}\int_0^\b
\mu\big(\s(0,0)\s(x,t)\big)\,dt, &
\mbox{if } \b<\oo,\\
\sum_{x\in\ZZ^d}\int_{-\oo}^\oo
\mu\big(\s(0,0)\s(x,t)\big)\,dt, &
\mbox{if } \b=\oo,
\end{array}\right.
\]
and
\[
B=\left\{\begin{array}{ll}
\sum_{x\in\ZZ^d}\int_0^\b
\mu\big(\s(0,0)\s(x,t)\big)^2\,dt, &
\mbox{if } \b<\oo,\\
\sum_{x\in\ZZ^d}\int_{-\oo}^\oo
\mu\big(\s(0,0)\s(x,t)\big)^2\,dt, &
\mbox{if } \b=\oo.
\end{array}\right.
\]
By~\cite[Theorem~6.3]{bjogr}, $\chi\uparrow\oo$
as $\l\uparrow\l_\crit$.  Since 
$\mu\big(\s(0,0)\s(x,t)\big)\leq1$ we have  $B\leq\chi$
and hence $B$ may or may not diverge at $\l_\crit$.
In the following statement, $\log_+ \chi$ is shorthand
for $(\log \chi)\vee 0$.

\begin{lemma}\label{bub_lem2}
Let $0<\b\leq\oo$
and assume that $\d\geq\eps_0$ and $\eps_0\leq\l<\l_\crit$
for some fixed $\eps_0>0$.
\begin{enumerate}
\item Suppose either (a) $\b<\oo$ and $d>4$, or (b)
$\b=\oo$ and $d>3$.  Then
there is a finite constant $C_1=C_1(\eps_0,\b)$ such that
$B(\l,\d,\b)\leq C_1$.
\item Suppose either (a) $\b<\oo$ and $d=4$, or (b)
$\b=\oo$ and $d=3$.  Then
there is a finite constant $C_2=C_2(\eps_0,\b)$ such that
$B(\l,\d,\b)\leq C_2(1+\log_+\chi)$.
\end{enumerate}
\end{lemma}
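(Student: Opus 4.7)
The plan is to combine the infrared bound of Theorem~\ref{irb_thm} with Plancherel's identity for the Schwinger two-point function. Working in finite volume $\L=\L_N$ and finite $\b$, Plancherel gives
\[
B_\L = \frac{1}{|\L|\b}\sum_{k\in\frac{2\pi}{2N}\L}\sum_{l\in\frac{2\pi}{\b}\ZZ} |\hat c^\b_\L(k,l)|^2.
\]
I shall use two pointwise bounds: the infrared bound $|\hat c^\b_\L(k,l)| \leq 48/\Phi(k,l)$ with $\Phi(k,l):=2\l\hat L(k)+l^2/(2\d)$, together with the elementary bound $|\hat c^\b_\L(k,l)| \leq \hat c^\b_\L(0,0) = \chi_\L$ coming from non-negativity of the Schwinger function (a consequence of the \gks-inequality). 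All estimates will be uniform in $N$, and then passed to the thermodynamic limit (and, in case~(b), also to the zero-temperature limit) by dominated convergence.

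For part~(1) I substitute the IRB into Plancherel directly. In case~(a) the Matsubara sum can be estimated via the elementary inequality
\[
\frac{1}{(a+bn^2)^2} \leq \frac{\one\{n=0\}}{a^2} + \frac{\one\{n\neq 0\}}{ab n^2},
\]
reducing the bound to $\int_{[-\pi,\pi]^d}dk/\hat L(k)^2$ plus $C(\b,\d)\int dk/\hat L(k)$, both of which converge for $d>4$ since $\hat L(k)\asymp|k|^2$ near the origin. In case~(b) the Matsubara sum becomes $\int dl/(2\pi)$ and the exact identity $\int dl/(a+bl^2)^2 = \tfrac{\pi}{2}a^{-3/2}b^{-1/2}$ leaves $\int dk/\hat L(k)^{3/2}$, which converges for $d>3$.

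For part~(2) the direct estimate diverges logarithmically at $(k,l)=0$ in the marginal dimension. To handle this I split the sum at the scale $\Phi \sim 1/\chi_\L$: write $B_\L \leq B_\L^{(1)}+B_\L^{(2)}$ with $B_\L^{(1)}$ summing over $S_1=\{\Phi \leq 1/\chi_\L\}$ and $B_\L^{(2)}$ over its complement. On $S_1$ use $|\hat c^\b_\L|^2 \leq \chi_\L^2$ and a volume estimate: in case~(a) only the $n=0$ Matsubara mode contributes to $S_1$ for large $\chi_\L$, giving a $k$-volume of order $\chi_\L^{-d/2}$, while in case~(b) the $(d+1)$-dimensional scaling volume is of order $\chi_\L^{-(d+1)/2}$; in both critical cases these combine with the $\chi_\L^2$ factor to give an $O(1)$ contribution. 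On $S_2$ apply the IRB squared; the tail integral $\int_{S_2}\Phi^{-2}\,dk\,dl$ is then logarithmic in the ratio of cutoffs (since the scaling integrand $r^{D-1-4}$ is marginal precisely when the effective dimension $D$ equals $4$), contributing $\tfrac{1}{2}\log\chi_\L + O(1)$.

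The main subtlety, and what distinguishes the two cases, is that the effective infrared dimension is $d$ in case~(a) but $d+1$ in case~(b): at finite $\b$ the $n=0$ Matsubara mode alone carries the singularity at $k=0$, which is why case~(a) requires $d>4$ whereas case~(b) requires only $d>3$. Once this is recognized, the volume estimates on $S_1$ and the tail estimates on $S_2$ are essentially scaling calculations, and the constants depend on $\eps_0$ and $\b$ but not on $\chi_\L$, as required.
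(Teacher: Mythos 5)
Your proposal follows the same basic architecture as the paper's proof: Plancherel's identity plus the infrared bound, with the additional elementary bound $|\hat c^\b_\L(k,l)|\le\chi_\L$ brought in to handle the marginal dimensions. Part~(1) is essentially identical to the paper's argument, including the Matsubara-sum estimate and the reduction to $\int dk/\hat L(k)^\a$ with $\a=2$ or $3/2$; your pointwise inequality $\tfrac{1}{(a+bn^2)^2}\le\tfrac{\one\{n=0\}}{a^2}+\tfrac{\one\{n\neq0\}}{abn^2}$ is a slightly different way to cut the Matsubara sum, but it isolates the same dominant $1/a^2$ singularity at $k=0$. The paper instead compares the sum with the integral, obtaining $\tfrac{1}{a^2}+\tfrac{\pi/2}{a^{3/2}}$, which packages the same information.

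For part~(2), where you differ is in the mechanics: you split the $(k,l)$-domain at the scale $\Phi\sim 1/\chi_\L$ and estimate the small-$\Phi$ region by a volume bound times $\chi_\L^2$ and the large-$\Phi$ region by the IRB squared. The paper instead encodes the cross-over in a single pointwise inequality, $\min\{\chi_\L,\,48/\Phi\}\le 2/(\chi_\L^{-1}+\Phi/48)$, and then performs the resulting (explicitly computable) radial integral $\int r^{d-1}/(\chi^{-1}+r^2)^2\,dr$ (resp.\ $\int r^{d}/(\chi^{-1}+r^2)^{3/2}\,dr$ at $\b=\oo$), from which the $\log\chi$ drops out directly. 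These are two routine presentations of the same scaling computation; your split-domain argument makes the origin of the logarithm (a marginal $\int dr/r$ between the cut-offs) more transparent, whereas the paper's version avoids any case analysis on the domain at the cost of a one-line inequality. Your observation that the effective infrared dimension is $d$ for $\b<\oo$ but $d+1$ for $\b=\oo$ (because for $\b<\oo$ only the $l=0$ Matsubara mode is soft) is exactly the right intuition and is implicit, though not stated in these words, in the paper.

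One small point worth noting: you correctly attribute the bound $|\hat c^\b_\L(k,l)|\le\chi_\L$ to nonnegativity of the Schwinger function itself (via {\gks}), i.e.\ $|\hat c|\le\sum_x\int|c|\,dt=\sum_x\int c\,dt=\chi_\L$. This is the right justification; be careful not to conflate it with the separate and also-used fact that $\hat c(k,l)\ge0$.
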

\begin{proof}
We start by proving the first statement.
Write 
$\hat K=\big(\tfrac{2\pi}{2N}\L\big)\times\big(\tfrac{2\pi}{\b}\ZZ\big)$
and $\hat K^\times=\hat K\setminus{(0,0)}$.
By Plancherel's formula
and the infrared bound,
\[
\begin{split}
B_\L&=\frac{1}{\b|\L|}
\sum_{(k,l)\in\hat K}\hat c_\L(k,l)^2
=\frac{1}{\b|\L|}\Big[\chi_\L^2+
\sum_{(k,l)\in\hat K^\times}\hat c_\L(k,l)^2\Big]\\
&\leq\frac{1}{\b|\L|}\Big[\chi_\L^2+
\sum_{(k,l)\in\hat K^\times}
\Big(\frac{48}{2\l\hat L(k)+l^2/2\d}\Big)^2\Big].
\end{split}
\]
We conclude that for all $\l<\l_\crit$, the 
bubble diagram $B$ is at most
\begin{equation}\label{bub1}
\frac{1}{(2\pi)^d\b}\sum_{l\in\frac{2\pi}{\b}\ZZ}
\int_{(-\pi,\pi]^d}\Big(\frac{48}{2\l\hat L(k)+l^2/2\d}\Big)^2dk,
\quad\mbox{if }\b<\oo,
\end{equation}
and at most
\begin{equation}\label{bub2}
\frac{1}{(2\pi)^{d+1}}\int_{-\oo}^\oo\int_{(-\pi,\pi]^d}
\Big(\frac{48}{2\l\hat L(k)+l^2/2\d}\Big)^2dl\,dk,
\quad\mbox{if }\b=\oo.
\end{equation}
For any $a>0$,
\begin{equation}\label{int1}
\sum_{l\in\ZZ}\frac{1}{(a+l^2)^2}\leq
\frac{1}{a^2}+\int_{-\oo}^\oo\frac{1}{(a+l^2)^2}\,dl=
\frac{1}{a^2}+\frac{\pi/2}{a^{3/2}}.
\end{equation}
Applying this with $a(k)=4\l\d\hat L(k)$ we deduce that
the quantity in~\eqref{bub1} is at most an absolute constant times
\begin{equation}\label{bub3}
\frac{\d^2}{\b}\int_{(-\pi,\pi]^d}\Big(\frac{1}{a(k)^2}+
\frac{1}{a(k)^{3/2}}\Big)dk
\end{equation}
and that the quantity in~\eqref{bub2} is at most an absolute constant 
times
\[
\d^2\int_{(-\pi,\pi]^d}\frac{1}{a(k)^{3/2}}dk.
\]
Recall that
$\hat L(k)=\sum_{j=1}^d(1-\cos(k_j))$.
In particular, $\hat L(k)\rightarrow0$ as $k\rightarrow0$
but $\hat L(k)$ is positive for all nonzero
$k\in(-\pi,\pi]^d$.  The dominant term
in~\eqref{bub3} as $k\rightarrow0$ is $1/a(k)^2$.
Hence there is a constant $c=c(\eps_0,\b)$ such that 
\[
B\leq c\int_{(-\pi,\pi]^d}\frac{dk}{\hat L(k)^\a},
\]
where $\a=2$ if $\b<\oo$ and $\a=3/2$ if $\b=\oo$.
There is also a constant $c'=c'(d)$ such that
$\hat L(k)\geq c'\|k\|_2^2$ for all $k\in(-\pi,\pi]^d$.
By using polar coordinates we see that
\[
\int_{(-\pi,\pi]^d}\frac{dk}{\|k\|_2^{2\a}}\leq
\int_0^{2\pi} \frac{r^{d-1}}{r^{2\a}}dr,
\]
which is finite if $d>2\a$.  Thus for $d>2\a$ and 
for all $\l<\l_\crit$, we have that
$B\leq C_1(\eps_0,\b)=c(\eps_0,\b)/c'(d)\int_0^{2\pi}r^{d-1-2\a}dr<\oo$.
This proves the first statement.

We now prove the second statement.  
By the triangle inequality and the nonnegativity
of $\hat c(k,l)$ we have that $|\hat c_\L(k,l)|\leq\chi_\L$
for all $(k,l)\in\hat K$.  Thus
\begin{equation*}
|\hat c_\L(k,l)|\leq
\min\Big(\chi_\L,\frac{48}{2\l\hat L(k)+l^2/2\d}\Big)
\leq \frac{2}{\chi_\L^{-1}+(2\l\hat L(k)+l^2/2\d)/48}.
\end{equation*}
Hence there is a constant $c=c(\eps_0,\b)$ such that
\begin{equation}\label{bub4}
B\leq c\sum_{l\in\frac{2\pi}{\b}\ZZ}
\int_{(-\pi,\pi]^d}\Big(\frac{1}{\chi^{-1}+\hat L(k)+l^2}\Big)^2dk,
\quad\mbox{if }d=4\mbox{ and }\b<\oo,
\end{equation}
and 
\begin{equation}\label{bub5}
B\leq c\int_{-\oo}^\oo\int_{(-\pi,\pi]^d}
\Big(\frac{1}{\chi^{-1}+\hat L(k)+l^2}\Big)^2dl\,dk,
\quad\mbox{if }d=3\mbox{ and }\b=\oo.
\end{equation}
As in the first part it follows that there is a constant
$c'(\eps_0,\b)$ such that
\begin{equation*}
B\leq c' \int_0^{2\pi} \frac{r^3}{(\chi^{-1}+r^2)^2}dr,
\quad\mbox{if }d=4\mbox{ and }\b<\oo,
\end{equation*}
and
\begin{equation*}
B\leq c' \int_0^{2\pi} \frac{r^2}{(\chi^{-1}+r^2)^{3/2}}dr,
\quad\mbox{if }d=3\mbox{ and }\b=\oo.
\end{equation*}
By explicit computation of these integrals (or otherwise)
the result follows.
\end{proof}

\subsection{Critical exponents}
We now turn to the proof of Theorem~\ref{mf_thm}.
We split the result into two propositions, one for
the upper bound and one for the lower bound, with some
additional details added.
Note that $\chi$ is (weakly) increasing in $\l$ and 
decreasing in $\d$ (this can be seen, for example,
in~\eqref{l1},~\eqref{rp2} and~\eqref{rp3}).
In particular, $\l_\crit(\d)$ is weakly increasing in $\d$.
Recall also that $\chi(\l,\d)\uparrow\oo$ as $\l\uparrow\l_\crit(\d)$.

The lower bound does not depend on Lemma~\ref{bub_lem2}
and is valid whenever  $0<\l_\crit<\oo$, which we recall is the
case whenever either $d\geq2$, or $d=1$ and $\b=\oo$.
Here and in what follows $\|(a,b)-(c,d)\|$ denotes the
Euclidean distance between points $(a,b),(c,d)\in\RR^2$.
\begin{proposition}\label{lb_prop}
Suppose that $0<\l_\crit<\oo$.  Then
\begin{enumerate}
\item $\chi$ is continuous in $(\l,\d)$ whenever
$\l<\l_\crit(\d)$;
\item For all $\d_0\geq0$ and all $\d\geq\d_0$ and
$0\leq\l\leq\l_\crit(\d_0)$ we have that
\begin{equation*}
\chi(\d,\l)\geq \frac{1/(\sqrt{3}(4d+2))}{\|(\d,\l)-(\d_0,\l_\crit(\d_0))\|}.
\end{equation*}
\end{enumerate}
\end{proposition}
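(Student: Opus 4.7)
The plan uses the upper bounds in the differential inequalities of Lemma~\ref{diff_lem} as the main tool for both parts, together with the monotone convergence $\chi_\L\uparrow\chi$.

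For part~(1), at finite volume $\chi_\L$ is real analytic in $(\l,\d)$, and Lemma~\ref{diff_lem} yields $|\partial\chi_\L/\partial\l|\leq 4d\chi_\L^2$ and $|\partial\chi_\L/\partial\d|\leq 2\chi_\L^2$, so $|\nabla\chi_\L|\leq(4d+2)\chi_\L^2$. Given a subcritical point $(\l^*,\d^*)$, monotonicity of $\chi$ (increasing in $\l$, decreasing in $\d$) together with monotonicity of $\l_\crit$ in $\d$ allows one to exhibit a closed rectangle $R=[0,\l_0]\times[\d_1,\oo)$ containing an open neighbourhood of $(\l^*,\d^*)$ on which $\chi\leq M<\oo$ (choose $\l^*<\l_0<\l_\crit(\d_1)$ and $\d_1<\d^*$). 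Since $\chi_\L\leq\chi\leq M$ on $R$ uniformly in $\L$, the gradient estimate yields a uniform Lipschitz constant $(4d+2)M^2$, which passes to the pointwise limit $\chi_\L\to\chi$ and gives continuity at $(\l^*,\d^*)$.

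For part~(2), I parametrize the straight segment from $(\d,\l)$ to $(\d_0,\l_\crit(\d_0))$ by $\d(s)=\d+s(\d_0-\d)$, $\l(s)=\l+s(\l_\crit(\d_0)-\l)$ for $s\in[0,1]$. Monotonicity of $\l_\crit$, combined with $\d(s)\geq\d_0$ and $\l(s)\leq\l_\crit(\d_0)$, ensures $\l(s)\leq\l_\crit(\d(s))$, so the path stays in the closed subcritical region. Setting $\phi_\L(s)=\chi_\L(\d(s),\l(s))$, the chain rule and the upper bounds in Lemma~\ref{diff_lem} give
\begin{equation*}
\frac{d\phi_\L}{ds}\leq\bigl[2(\d-\d_0)+4d(\l_\crit(\d_0)-\l)\bigr]\phi_\L^2\leq(4d+2)\sqrt{3}\,\rho\,\phi_\L^2,
\end{equation*}
the final inequality following from $2a+4db\leq(4d+2)(a+b)\leq(4d+2)\sqrt{2}\,\rho\leq(4d+2)\sqrt{3}\,\rho$ for $a,b\geq 0$ with $a^2+b^2=\rho^2$. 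Dividing by $\phi_\L^2$ and integrating over $[0,1]$ yields $1/\phi_\L(0)-1/\phi_\L(1)\leq(4d+2)\sqrt{3}\,\rho$. Taking $\L\uparrow\oo$ (or $\L,\b\uparrow\oo$ when $\b=\oo$), $\phi_\L(0)\to\chi(\d,\l)$ while $\phi_\L(1)\to\chi(\d_0,\l_\crit(\d_0))=\oo$ by~\cite[Theorem~6.3]{bjogr}, giving $\chi(\d,\l)\geq 1/(\sqrt{3}(4d+2)\rho)$.

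The main subtlety lies in part~(1): verifying that every subcritical point admits a rectangle of the required form presupposes some regularity of $\l_\crit$ (e.g.\ left-continuity in $\d$). One can address this either by a more careful choice of neighbourhood, or by using part~(2) to bootstrap a local upper bound on $\chi$ from the quantitative distance-to-criticality estimate that it provides.
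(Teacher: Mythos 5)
Your proof of part~(2) is correct and essentially the paper's: both convert the upper bounds of Lemma~\ref{diff_lem} into a bound on the derivative of $\chi_\L^{-1}$ and integrate from $(\d,\l)$ up to the critical point. (The paper integrates along an L-shaped path with the intermediate corner $(\d_2,\l_2)$; you use the straight segment; either works and both give the constant $\sqrt{3}(4d+2)$.) The minor difference is that the paper lets $\l_2\uparrow\l_\crit(\d_0)$ rather than evaluating at $\l_\crit(\d_0)$ directly, which sidesteps having to argue that $\chi$ equals $+\infty$ (rather than merely diverges as $\l\uparrow\l_\crit$) at the critical point; your version needs the lower semicontinuity of $\chi$ at that one point, which is fine but worth stating.

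Part~(1) is where the genuine gap lies, and you have correctly diagnosed it yourself: the rectangle construction presupposes left-continuity of $\l_\crit$ in $\d$, which is not available without already knowing something like part~(1). The bootstrap you suggest via part~(2) does not close the gap either, since part~(2) gives a \emph{lower} bound on $\chi$, while what you need is a local \emph{upper} bound. The fix is simply to notice that the very manipulation you perform in part~(2) already removes the problem: the upper bounds in~\eqref{di1} and~\eqref{di2} are equivalent to
\begin{equation*}
-\frac{\partial\chi_\L^{-1}}{\partial\l}\leq 4d
\qquad\text{and}\qquad
\frac{\partial\chi_\L^{-1}}{\partial\d}\leq 2,
\end{equation*}
which are \emph{uniform} in $\L$, $\d$, $\l$, with no need for a local bound $M$ on $\chi$. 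Combined with the monotonicity of $\chi$ (increasing in $\l$, decreasing in $\d$), this gives the two-sided estimate $0\leq\chi_\L^{-1}(\d_2,\l_1)-\chi_\L^{-1}(\d_1,\l_2)\leq(4d+2)(\d_2-\d_1+\l_2-\l_1)$ for $\d_1<\d_2$, $\l_1<\l_2$, hence a uniform Lipschitz bound for $\chi_\L^{-1}$ that passes to the limit $\L\uparrow\ZZ^d$ and yields continuity of $\chi^{-1}$, and therefore of $\chi$, on the subcritical region (where $\chi>0$). In other words, you should work with $\chi^{-1}$ throughout both parts, exactly as you did in part~(2); then part~(1) needs no rectangle, no left-continuity of $\l_\crit$, and no bootstrap.
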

\begin{proof}
We use the upper bounds in Lemma~\ref{diff_lem}.
These may be rewritten as
\begin{equation*}
-\frac{\partial\chi_\L^{-1}}{\partial\l}\leq 4d
\qquad\mbox{and}\qquad 
\frac{\partial\chi_\L^{-1}}{\partial\d}\leq 2
\end{equation*}
Let $0\leq\d_1<\d_2$ and $0\leq\l_1<\l_2$.  Then
\begin{equation*}
\begin{split}
\chi_\L^{-1}(\d_2,\l_1)-\chi_\L^{-1}(\d_1,\l_2)&=
\int_{\d_1}^{\d_2}\frac{\partial\chi_\L^{-1}}{\partial\d}d\d-
\int_{\l_1}^{\l_2}\frac{\partial\chi_\L^{-1}}{\partial\l}d\l\\
&\leq (4d+2)(\d_2-\d_1+\l_2-\l_1)\\
&\leq\sqrt{3}(4d+2)\|(\d_2,\l_1)-(\d_1,\l_2)\|.
\end{split}
\end{equation*}
Now let $N\rightarrow\oo$ or $N,\b\rightarrow\oo$
as approproate.  The continuity statement follows immediately,
and the second statement follows on letting
$\d_1=\d_0$ and $\l_2\uparrow\l_\crit(\d_0)$.
\end{proof}

We now turn to the upper bound in Theorem~\ref{mf_thm}.
We will in what follows
assume that $\d,\l>\eps_0$
for some arbitrary but fixed $\eps_0>0$.
By continuity we have the following.
\begin{lemma}\label{nhd_lem}
For each $\d_0>\eps_0$ and each $C>0$
there is an open, bounded neighbourhood $U$ 
of $(\d_0,\l_\crit(\d_0))$ such
that $\chi(\d,\l)\geq 2 C$ for all $(\d,\l)\in U$.
\end{lemma}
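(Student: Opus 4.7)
The plan rests on three ingredients already in the paper: the susceptibility $\chi(\d,\l)$ is weakly increasing in $\l$ and weakly decreasing in $\d$; it is continuous in $(\d,\l)$ throughout the subcritical region $\{\l<\l_\crit(\d)\}$ by Proposition~\ref{lb_prop}(i); and, for fixed $\d_0$, $\chi(\d_0,\l)\uparrow\infty$ as $\l\uparrow\l_\crit(\d_0)$. The idea is to find a single point strictly inside the subcritical region at which $\chi$ already exceeds $3C$, and then propagate this bound to a full neighbourhood of $(\d_0,\l_\crit(\d_0))$ by monotonicity.

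Concretely, I would first use the divergence of $\chi$ to fix some $\l_1\in(\eps_0,\l_\crit(\d_0))$ with $\chi(\d_0,\l_1)>3C$. Since $\l_\crit(\cdot)$ is weakly increasing in $\d$, one has $\l_1<\l_\crit(\d)$ for every $\d\geq\d_0$, so $\chi(\d,\l_1)$ is finite and jointly continuous at $(\d_0,\l_1)$. Therefore I can choose $\eta\in(0,\d_0-\eps_0)$ small enough that $\chi(\d_0+\eta,\l_1)\geq 2C$.

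I then take $U=(\d_0-\eta,\d_0+\eta)\times(\l_1,\l_\crit(\d_0)+\eta)$, an open bounded neighbourhood of $(\d_0,\l_\crit(\d_0))$. For $(\d,\l)\in U$ with $\l<\l_\crit(\d)$, monotonicity (decreasing in $\d$, increasing in $\l$) together with $\d<\d_0+\eta$ and $\l>\l_1$ gives $\chi(\d,\l)\geq\chi(\d_0+\eta,\l_1)\geq 2C$. For $(\d,\l)\in U$ with $\l\geq\l_\crit(\d)$, the divergence of $\chi$ as $\l'\uparrow\l_\crit(\d)$, combined with its monotonicity in $\l$, yields $\chi(\d,\l)=\infty\geq 2C$ under the natural extension of the definition.

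The proof really contains no obstacle in the usual sense; it is a direct application of the properties of $\chi$ collected earlier. The only point that needs a bit of care is verifying that the value $\l_1$ at which we apply continuity stays safely below the critical curve as $\d$ varies through the rectangle, and this is exactly what the monotonicity of $\l_\crit$ provides.
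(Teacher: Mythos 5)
Your argument is correct and fills in what the paper leaves implicit — the paper offers only the remark ``By continuity we have the following.'' Combining the divergence $\chi(\d_0,\l)\uparrow\infty$ as $\l\uparrow\l_\crit(\d_0)$, the continuity from Proposition~\ref{lb_prop}(i) at the strictly subcritical point $(\d_0,\l_1)$, and the monotonicity of $\chi$ (increasing in $\l$, decreasing in $\d$) together with the monotonicity of $\l_\crit(\cdot)$ is exactly the intended route for propagating a large value of $\chi$ at one subcritical point to a full open neighbourhood of $(\d_0,\l_\crit(\d_0))$.
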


\begin{proposition}\label{ub_prop}
Fix $\theta\in(0,\oo)$ and $\d_0>0$.  
\begin{enumerate}
\item Suppose that either $\b<\oo$ and $d>4$,
or $\b=\oo$ and $d>3$.
There is a neighbourhood $U$ of $(\d_0,\l_\crit(\d_0))$
and a constant $c(\d_0,\theta)$
such that for all
$(\d,\l)\in U$, of the form $\d=\d'-t$, $\l=\l_\crit(\d')+\theta t$
with $t<0$, we have that $\chi(\l,\d)\leq c(\d_0,\theta)/|t|$.
\item Suppose that either $\b<\oo$ and $d=4$,
or $\b=\oo$ and $d=3$.  
There is a neighbourhood $U$ of $(\d_0,\l_\crit(\d_0))$
and a constant $c(\d_0,\theta)$
such that for all
$(\d,\l)\in U$, of the form $\d=\d'-t$, $\l=\l_\crit(\d')+\theta t$
with $t<0$, we have that 
$\chi(\l,\d)\leq c(\d_0,\theta)|\log t|/|t|$.
\end{enumerate}
\end{proposition}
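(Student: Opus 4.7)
The plan is to derive an ordinary differential inequality for $\chi^{-1}$ along the line of approach and integrate.  Parametrise the line by $\d(t)=\d'-t$, $\l(t)=\l_\crit(\d')+\theta t$ with $t<0$, set $\chi(t):=\chi(\d(t),\l(t))$, and write $X=\partial\chi/\partial\l\geq 0$, $Y=-\partial\chi/\partial\d\geq 0$.  Then $d\chi/dt=\theta X+Y$ and
\[
-\frac{d\chi^{-1}}{dt}\;=\;\frac{\theta X+Y}{\chi^2}.
\]
The goal is to bound this from below by a positive quantity in a neighbourhood $U$ of $(\d_0,\l_\crit(\d_0))$; integrating from $t<0$ up to $0$, where $\chi^{-1}(0)=0$, then yields the desired upper bound on $\chi(t)$.

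Working in finite volume first, take $\theta$ times the lower bound in~\eqref{di1} plus the lower bound in~\eqref{di2}:
\[
\theta X + Y \;\geq\; (4d\theta+2)(\chi^2-B\chi)\;-\;(2d\theta+1)B(\l X+4\d Y).
\]
The crucial observation is the elementary bound $\l X+4\d Y\leq M(\theta X+Y)$ with $M:=\max(\l/\theta,4\d)$, which is valid because $X,Y\geq 0$.  Substituting this absorbs the error term into the left-hand side, giving
\[
\theta X + Y \;\geq\; \frac{(4d\theta+2)(\chi^2-B\chi)}{1+(2d\theta+1)BM},
\]
and therefore
\[
-\frac{d\chi^{-1}}{dt}\;\geq\;\frac{(4d\theta+2)(1-B/\chi)}{1+(2d\theta+1)BM}.
\]
Passing to the infinite-volume limit is routine.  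In a bounded neighbourhood $U$ of $(\d_0,\l_\crit(\d_0))$, $M$ is bounded above, and by Lemma~\ref{nhd_lem} we may shrink $U$ further so that $1-B/\chi\geq 1/2$ throughout; here we use that $B=O(1)$ in case~(1) or $B=O(\log\chi)=o(\chi)$ in case~(2), from Lemma~\ref{bub_lem2}.

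In case~(1), since $B\leq C_1$, the denominator is bounded above by a constant $K_1=K_1(\d_0,\theta,d)$, and hence $-d\chi^{-1}/dt\geq c_1>0$ throughout $U$.  Integration gives $\chi(t)^{-1}\geq c_1|t|$, i.e.\ $\chi(t)\leq 1/(c_1|t|)$.  In case~(2), $B\leq C_2(1+\log_+\chi)$ bounds the denominator by $K_2(1+\log\chi)$, so $-d\chi^{-1}/dt\geq c_2/(1+\log\chi)$.  Setting $u=\chi^{-1}$ (so $\log\chi=-\log u$) converts this to $(1-\log u)\,du\leq -c_2\,dt$; as the antiderivative of $1-\log u$ is $2u-u\log u$, integration yields $u(t)(2-\log u(t))\geq c_2|t|$.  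For small $u(t)$ this reads $\log\chi(t)/\chi(t)\gtrsim c_2|t|$, and the standard bootstrap $\log\chi(t)\lesssim|\log|t||$ for $|t|$ small converts it into $\chi(t)\leq c_2'|\log t|/|t|$.

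I expect the main obstacle to be the combining step above.  The naive approach of using the upper bounds $X\leq 4d\chi^2$, $Y\leq 2\chi^2$ to bound $\l X+4\d Y$ directly by $O(\chi^2)$ would produce an uncontrolled $O(B\chi^2)$ term in the lower bound on $\theta X+Y$, forcing the artificial smallness condition $B<1/(2d\l+4\d)$ that Lemma~\ref{bub_lem2} does not provide.  Absorbing $(2d\theta+1)B(\l X+4\d Y)$ back into the left side via $\l X+4\d Y\leq M(\theta X+Y)$ is what sidesteps this issue and makes the argument go through under mere (log-)boundedness of $B$.
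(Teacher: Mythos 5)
Your proof is correct and follows the same route as the paper's: combine the lower bounds in \eqref{di1} and \eqref{di2} with weights $\theta$ and $1$, absorb the error term $(2d\theta+1)B(\l X+4\d Y)$ back into $\theta X+Y$ (the paper uses $\l X+4\d Y\leq(\l/\theta+4\d)(\theta X+Y)$ where you use the $\max$ --- the resulting coefficient $2d\l+4\d+\l/\theta+8d\d\theta$ in~\eqref{f2} is exactly $(2d\theta+1)(\l/\theta+4\d)$, so this is immaterial), divide by $\chi_\L^2$, integrate and pass to the infinite-volume limit via Fatou, and feed in Lemmas~\ref{nhd_lem} and~\ref{bub_lem2}. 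The only deviation is in case~(2), where you carry out the integration of $c_2/(1+\log\chi)$ explicitly via $u=\chi^{-1}$, the antiderivative $2u-u\log u$, and the bootstrap $\log\chi\lesssim|\log|t||$, whereas the paper simply cites \cite[Lemma~4.1]{ag} for precisely this step; your self-contained computation is a correct replacement for that citation.
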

\begin{proof}
Throughout
the proof we will let $(\d,\l)$ be of the form $\d=\d'-t$, 
$\l=\l_\crit(\d')+\theta t$ with $t<0$, and will write 
$\chi_\L(t)$ and $B_\L(t)$ for $\chi_\L(\d,\l)$
and $B_\L(\d,\l)$, respectively.  We have that the derivative in $t$
\[
\chi_\L'(t)=\theta\frac{\partial\chi_\L}{\partial\l}+
\Big(-\frac{\partial\chi_\L}{\partial\d}\Big).
\]
From~\eqref{di1} and~\eqref{di2} we see that
\begin{equation}\label{f2}
\chi_\L'(t)\geq(4d\theta+2)\chi_\L(t)^2\frac{1-B_\L(t)/\chi_\L(t)}
{1+(2d\l +4\d +\l/\theta+8d\d\theta) B_\L(t)}.
\end{equation}
Restricting $(\d,\l)$ to an arbitrary bounded open set $U'$ 
containing $(\d_0,\l_\crit(\d_0))$ we may
replace $2d\l +4\d +\l/\theta+8d\d\theta$ by a uniform
upper bound $c_1(\d_0,\theta)$.
Since 
\[
\chi_\L'(t)/\chi_\L(t)^2=-\frac{d}{dt}\frac{1}{\chi_\L(t)}
\]
it follows on integrating that for all $t_1<t_2<0$ (such that 
the corresponding points $(\d,\l)$ lie in $U'$) we have
\[
\frac{1}{\chi_\L(t_1)}-\frac{1}{\chi_\L(t_2)}\geq (4d\theta+2)
\int_{t_1}^{t_2}\frac{1-B_\L(t)/\chi_\L(t)}{1+c_1(\d_0,\theta) B_\L(t)}\,dt.
\] 
Letting $\L\uparrow\ZZ^d$ and applying Fatou's Lemma, we see that
\begin{equation}\label{fat}
\frac{1}{\chi(t_1)}-\frac{1}{\chi(t_2)}\geq (4d\theta+2)
\int_{t_1}^{t_2}\frac{1-B(t)/\chi(t)}{1+c_1(\d_0,\theta) B(t)}\,dt.
\end{equation}

For the first part of the statement, let $U$ be as in
Lemma~\ref{nhd_lem} with $C=C_1(\eps_0,\b)$ of Lemma~\ref{bub_lem2}.
From~\eqref{fat} we see that
\begin{equation*}
\frac{1}{\chi(t_1)}-\frac{1}{\chi(t_2)}\geq 
(t_2-t_1)\frac{2d\theta+1}{1+c_1(\d_0,\theta)C_1(\eps_0,\b)}.
\end{equation*}
Letting $t_2\uparrow0$ gives the result.

For the second part, we deduce from~\eqref{fat}
and the second part of Lemma~\ref{bub_lem2} that
\begin{equation*}
\frac{1}{\chi(t_1)}-\frac{1}{\chi(t_2)}\geq (4d\theta+2)
\int_{t_1}^{t_2}\frac{1-C_2(\eps_0,\b)(1+\log\chi(t))/\chi(t)}
{1+c_2(\d_0,\theta) C_2(\eps_0,\b)(1+\log\chi(t))}\,dt.
\end{equation*}
It follows that there is a constant $c_2(\eps_0,\d_0,\theta,\b)$
such that
in a small enough neighbourhood $U$ of $(\d_0,\l_\crit(\d_0))$ 
we have
\begin{equation*}
\frac{1}{\chi(t_1)}-\frac{1}{\chi(t_2)}\geq c_2
\int_{t_1}^{t_2}\frac{1}{1+\log\chi(t)}\,dt.
\end{equation*}
The second part now follows from~\cite[Lemma~4.1]{ag}.
\end{proof}

\begin{proof}[Proof of Theorem~\ref{mf_thm}]
The lower bounds are immediate from Proposition~\ref{lb_prop}.
The upper bounds follow from Proposition~\ref{ub_prop}
on noting that $\|(\d,\l)-(\d_0,\l_\crit(\d_0))\|=\sqrt{1+\theta^2}\cdot |t|$.
\end{proof}

\begin{remark}\label{gamma_rk}
A similar argument as in
Proposition~\ref{ub_prop} would give the critical exponent value
$\g=1$ also for `vertical' ($\d$ constant) and `horizontal'
($\l$ constant) approach to the critical curve,
\emph{subject to} first proving differential inequalities of the form
\[
\frac{\partial\chi_\L}{\partial \l}\leq c_1
\Big(-\frac{\partial\chi_\L}{\partial \d}\Big)\qquad\mbox{and}
\qquad -\frac{\partial\chi_\L}{\partial \d}\leq c_2
\frac{\partial\chi_\L}{\partial \l},
\]
with $c_1,c_2$ uniform in $\L$.  We do not pursue this here,
only noting that
similar inequalities hold for classical Ising and Potts 
models~\cite{aiz_grimm,bez_gr_kes}.
In the absence of such inequalities it is a-priori possible
that the behaviour for vertical or horizontal approach
differs from the case in Theorem~\ref{mf_thm}.
For example, if it were the case 
that $\l_\crit(\d)\sim(\d-\d_0)^2$ as $\d$
decreases to some $\d_0>0$, then
Proposition~\ref{ub_prop} would imply that 
$\chi(\d,\l_\crit(\d_0))\sim(\d-\d_0)^{-2}$.
\end{remark}

\bibliography{irb}
\bibliographystyle{plain}

\end{document}